\def\Argmin{\mathop{\hbox{\rm Argmin}}}
\def\Argmax{\mathop{\hbox{\rm Argmax}}}
\def\Var{{\He}}
\def\Varn{{\He}_n}
\def\Cov{{\boldsymbol \Sigma}}
\def\He{\mathbf{H}}
\def\Gr{\mathbf{G}}
\def\Ma{\mathbf{J}}
\def\bA{\mathbf{A}}
\def\Qu{\mathbf{Q}}
\def\Id{\mathbf{I}}
\def\csX{{\widetilde X}}
\def\s{\mathtt{s}}
\def\k{\varkappa}
\def\Rp{\R^{(+)}}
\def\vtheta{\vartheta}
\newcommand{\SC}{\textsf{SC}}
\newcommand{\B}{\mathfrak{B}}
\newcommand{\wt}{\widetilde}
\newcommand{\lsim}{\lesssim}
\newcommand{\gsim}{\gtrsim}
\newcommand{\lang}{\langle}
\newcommand{\rang}{\rangle}
\newcommand{\ud}{\textup{d}}
\newcommand{\R}{\mathds{R}}
\newcommand{\E}{\mathds{E}}
\newcommand{\T}{\top}
\newcommand{\half}{\tfrac{1}{2}}
\newcommand{\wh}{\widehat}
\newcommand{\cN}{\mathcal{N}}
\newcommand{\cS}{\mathcal{S}}
\newcommand{\cSc}{{\mathcal{S}_c}}
\newcommand{\cY}{\mathcal{Y}}
\newcommand{\cX}{\mathcal{X}}
\newcommand{\bP}{\mathbf{P}}
\newcommand{\Z}{\mathcal{Z}}
\newcommand{\X}{\mathcal{X}}
\newcommand{\Y}{\mathcal{Y}}
\newcommand{\tr}{\textup{Tr}}
\newcommand{\df}{\texttt{\textup{df}}}
\newcommand{\Prob}{\mathds{P}}
\newcommand{\prccq}{\preccurlyeq}
\newcommand{\succq}{\succcurlyeq}
\newcommand{\En}{\E_{n}}
\newcommand{\dto}{\rightsquigarrow}
\newcommand{\deff}{d_{\textup{eff}}}
\newcommand{\veps}{\varepsilon}
\newcommand{\Exp}{\textup{Exp}}
\newcommand{\vphi}{\varphi}
\newcommand{\cE}{\mathcal{E}}
\newcommand{\cEc}{{{\mathcal{E}_0^c}}}
\newcommand{\cZ}{\mathcal{Z}}
\newcommand{\Distrib}{\mathscr{P}}
\newcommand{\cP}{\mathcal{P}}
\newcommand{\ind}{\mathds{1}}
\newcommand{\dimacorr}[2]{{\color{magenta}#2}}
\newtheorem{theorem}{Theorem}[section]
\newtheorem{lemma}{Lemma}[section]
\newtheorem{corollary}{Corollary}[section]
\newtheorem{proposition}{Proposition}[section]
\newtheorem{remark}{Remark}[section]
\newenvironment{customass}[1]
{\innercustomass}
{\endinnercustomass}
\title{Finite-sample analysis of $M$-estimators using self-concordance\footnote{To appear in {\em Electronic Journal of Statistics} (as of November 2020).}}
\author{Dmitrii M. Ostrovskii\thanks{University of Southern California, Viterbi School of Engineering. 
3650 McKlintock Avenue, CA 90089, Los Angeles, USA. Email: \texttt{dmitrii.ostrovskii@inria.fr}.} 
	\and Francis Bach\thanks{INRIA Paris and \'Ecole Normale Sup\'erieure. 2 rue Simone Iff, 75012 Paris, France. Email: \texttt{francis.bach@inria.fr}.}}
\date{}
\begin{document}
\maketitle

\begin{abstract}

The classical asymptotic theory for parametric $M$-estimators guarantees that, in the limit of infinite sample size, the excess risk has a chi-square type distribution, even in the misspecified case. We demonstrate how \textit{self-concordance} of the loss allows to characterize the {\em critical sample size} sufficient to guarantee a chi-square type in-probability bound for the excess risk. Specifically, we consider two classes of losses: (i) self-concordant losses in the classical sense of Nesterov and Nemirovski, i.e., whose third derivative is uniformly bounded with the $3/2$ power of the second derivative; (ii) \textit{pseudo} self-concordant losses, for which the power is removed. These classes contain losses corresponding to several generalized linear models, including the logistic loss and pseudo-Huber losses. 

Our basic result under minimal assumptions bounds the critical sample size by $O(d \cdot d_{\text{eff}}),$ where $d$ the parameter dimension and $d_{\text{eff}}$ the effective dimension that accounts for model misspecification. In contrast to the existing results, we only impose \textit{local} assumptions that concern the population risk minimizer $\theta_*$. Namely, we assume that the calibrated predictors, i.e., predictors scaled by the square root of the second derivative of the loss, is subgaussian at $\theta_*$. Besides, for type-ii losses we require boundedness of certain measure of curvature of the population risk at $\theta_*$.

Our improved result bounds the critical sample size from above as $O(\max\{d_{\text{eff}}, d \log d\})$ under slightly stronger assumptions. Namely, the local assumptions must hold in the neighborhood of $\theta_*$ given by the Dikin ellipsoid of the population risk. Interestingly, we find that, for logistic regression with Gaussian design, there is no actual restriction of conditions: the subgaussian parameter and curvature measure remain near-constant over the Dikin ellipsoid. Finally, we extend some of these results to $\ell_1$-penalized estimators in high dimensions.

\end{abstract}

%



\section{Introduction and problem formulation}
\label{sec:intro}

Recall the standard statistical learning setup: given a set~$\Theta \subseteq \R^d$ that parametrizes the space of possible hypotheses, and observing a random~$Z \in \Z$ with unknown distribution $\cP$, one would like to minimize the \textit{population risk}~$L(\theta) := \E[\ell_Z(\theta)]$.
For each possible observation $z$ of $Z$, the \textit{loss} $\ell_{z}: \Theta \to \R$ specifies the cost of choosing~$\theta$ under the outcome~$\{Z = z\}$, and~$\E[\cdot]$ is the expectation with respect to the distribution~$\cP$.
This distribution is assumed unknown, so the population risk cannot be computed and minimized directly. 
Instead, one is granted access to the sample~$(Z_1, ..., Z_n)$ of independent copies of~$Z$, and uses it to construct an estimate~$\wh\theta$ of the population risk minimizer,
\[
\theta_* \in \Argmin_{\theta \in \Theta} L(\theta),
\]
assuming that such a minimizer exists.
As such, we can consider the empirical distribution~$\cP_n$ -- uniform probability measure supported on the sample -- and the empirical risk~$L_n(\theta)$, defined as the observable counterpart of~$L(\theta)$, namely,
\[
L_n(\theta) := \frac{1}{n} \sum_{i=1}^n \ell_{Z_i}(\theta).
\]
Ideally, we would like to have an estimator with small \textit{excess risk}~$L(\wh\theta) - L(\theta_*)$, in probability or in expectation over the sample.
Since for each fixed value~$\theta$ of the parameter, $L_n(\theta)$ is an unbiased estimate of $L(\theta)$ which converges to $L(\theta)$ almost surely by the law of large numbers, a natural candidate estimator of $\theta_*$ is the \textit{empirical risk minimizer} (ERM), defined as
\[
\wh\theta_n \in \Argmin_{\theta \in \Theta} L_n(\theta).
\]
In this paper, we are concerned with establishing high-probability {finite-sample} bounds on the excess risk~$L(\wh\theta_n) - L(\theta_*)$ of this estimator. 
The classical {Fisher theorem}~(\cite{lehmann2006theory}) implies the rescaled excess risk has a chi-square type limiting behavior, under weak conditions, when~$n \to \infty$. 
When stated informally, our goal in this paper is to characterize the {\em critical sample size}  sufficient to enter the this ``asymptotic regime'', i.e., to guarantee a chi-square type high-probability bound for the excess risk in {finite sample}.  
Elaborating on this goal in more detail and stating our results would be impossible without first giving a brief overview of the classical asymptotic theory. 
We give such overview in the next section.

\subsection{Classical asymptotic theory}
Our main focus in this paper is the setting where~$L_n(\theta)$ is a negative log-likelihood, that is~$\ell_z(\theta) = -\log p_\theta(z)$ where~$p_\theta(\cdot)$ is some probability density supported on~$\Z$.
In this case,~$\wh\theta_n$ maximizes the likelihood of observing the i.i.d.~sample~$(Z_1,..., Z_n)$ from~$\cP_{\theta}$ ranging over a \textit{parametric family}~$\Distrib = \{\cP_\theta, \; \theta \in \Theta\}$. 
In reality,~$\Distrib$ may or may not contain the actual data-generating distribution~$\cP$. When~$\cP \in \Distrib$, we say that the parametric model corresponding to $\Distrib$ is \textit{well-specified}; in this case, ERM becomes the maximum-likelihood estimator (MLE). 
Otherwise, the model is called \textit{misspecified}, and ERM can be regarded as MLE under model misspecification, or \textit{quasi} maximum likelihood estimator~\cite{white1982maximum}.
In this case,~$\cP_{\theta_*}$ corresponds to the ``projection'' of~$\cP$ onto the family~$\Distrib$ in the sense of the Kullback-Leibler divergence, and the quasi MLE approximates~$\cP_{\theta_*}$ by replacing~$\cP$ with the empirical distribution~$\cP_n$.

Our goal in this section is to give a brief overview of the main results of the asymptotic theory of~$M$-estimation.
Most of them, see monographs~\cite{lehmann2006theory,ibragimov2013statistical,vdv,borovkov}, rely on the \textit{local regularity} assumptions about the loss, allowing for second-order Taylor expansion of~$L(\theta)$ around~$\theta_*$.
In particular, it is assumed that~$L(\theta)$ is sufficiently smooth at~$\theta_*$, which is an interior point of~$\Theta$, so that the first-order optimality condition for~$\theta_*$ reduces to~$\nabla L(\theta_*) = 0$.
Moreover, the Hessian 
\[
\He := \nabla^2 L(\theta_*)
\]
is assumed to be non-degenerate, i.e.,~$\He \succ 0$. Finally, the empirical risk is assumed to be three times continuously differentiable at~$\theta_*$, see,~e.g.,~\cite{lehmann2006theory}. 
When combined together, these assumptions allow to derive, as a starting point, the \textit{local asymptotic normality} of quasi MLE: when~$n \to \infty$ with fixed~$d$, 
\begin{equation}
\label{eq:lan-fisher}
\begin{aligned}
\sqrt{n}\He^{1/2}(\wh\theta_n - \theta_*)  &\dto \cN(0, \He^{-1/2} \Gr \He^{-1/2}),
\end{aligned}
\end{equation}
where~$\dto$ denotes convergence in law, and~$\Gr$ is the covariance matrix of the loss gradient at~$\theta_*$ (also called Fisher's information matrix):
\[
\Gr := \E[\nabla \ell_Z(\theta_*)\nabla \ell_{Z}(\theta_*)^\T].
\]
Matrices~$\Gr$ and~$\He$ remain fixed as $n$ grows. Hence, under mild regularity assumptions,\footnotemark~one also has that the variance of~$\wh\theta_n$ decreases as~$O(1/n)$.
\footnotetext{It suffices for~$\rho_n := \sqrt{n}\|\wh\theta_n\|_2$ to be uniformly integrable, i.e.,~$\lim_{\veps \to 0}\sup_{n} \E[\rho_n \ind_{\rho_n \ge \veps}] = 0$. This is a very weak condition; see~\cite[Sec. 6.2]{klenke2013probability} for stronger (but easier to verify) conditions.}
Moreover, in the well-specified case~$\Gr = \He$, see, e.g.,~\cite{bartlett1953approximate}, which leads to~\textit{Fisher's theorem}:
\[
\sqrt{n} \He^{1/2}(\wh\theta_n - \theta_*) {\dto}\cN(0, \Id_d),
\] 
where~$\Id_d$ is the identity matrix of size~$d$.
Thus, denoting~$\|\cdot\|_{\Ma}$ the norm linked to positive semidefinite matrix~$\Ma$ by~$\|x\|_{\Ma} = \|\Ma^{1/2} x\|_{2}$, we have
$n\| \wh\theta_n-\theta_*\|_{\He}^2 {\dto}{\chi_d^2},$
where~$\chi_d^2$ is the chi-square law with $d$ degrees of freedom.
The second-order Taylor expansion of the average risk around $\theta_*$ then allows to derive the same asymptotic law for the scaled excess risk~$2n[L(\wh\theta_n) - L(\theta_*)]$ -- this result is known as \textit{Wilks' theorem}. 
In turn, this implies (under mild regularity conditions) that
\begin{equation}
\label{eq:crb-well-spec}
\En[L(\wh\theta_n)]  - L(\theta_*) = \frac{d}{2n} + o(n^{-1}), \; \text{as} \; n \to \infty,
\end{equation}
where~$\En$ is the expectation over the product distribution~$\cP^{\otimes n}$ of~$(Z_1, ..., Z_n)$.
More precisely, by the standard chi-square deviation bounds (see~e.g.,~\cite[Lemma 1]{lama2000}) one has that, with probability~$\ge 1-\delta$,
\begin{equation}
\label{eq:crb-prob-well-spec}
L(\wh\theta_n)  - L(\theta_*) = \frac{(\sqrt{d} + \sqrt{2\log(1/\delta)})^2}{2n} + o(n^{-1}).
\end{equation}
Finally, these $O(d/n)$ asymptotic bounds can be extended to the general situation of misspecified models by introducing the \textit{effective dimension}:
\[
\deff := \E [\| \nabla\ell_Z(\theta_*) \|_{\He^{-1}}^2] = \tr (\He^{-1/2} \Gr  \He^{-1/2}).
\]
Note that in a well-specified model,~$\deff = d$ since $\Gr = \He$; moreover, in the ill-specified case one can still have~$\deff  = O(d)$ ``in favorable circumstances'' -- we will consider one such situation, that of misspecified linear regression, later on.\footnotemark
\footnotetext{We can also have~$\deff < d$ if we get ``extremely lucky''. For example, consider the Gaussian shift model $y \sim \cN(\theta,1)$, and let in reality~$y \sim \cN(0,\sigma)$. Then~$\deff = \sigma^2$ while~$d = 1$.}
The expected excess risk bound~\eqref{eq:crb-well-spec} then changes to
\begin{equation}
\label{eq:crb}
\En[L(\wh\theta_n)]  - L(\theta_*) = \frac{\deff}{2n} + o(n^{-1}),
\end{equation}
and the corresponding in-probability bound (see again~\cite[Lemma 1]{lama2000})
~is
\begin{equation}
\label{eq:crb-prob}
L(\wh\theta_n)  - L(\theta_*) = \frac{\deff (1 + \sqrt{2\log(1/\delta)})^2}{2n} + o(n^{-1}).
\tag{$\star$}
\end{equation}
In fact, the main term in the right-hand side of~\eqref{eq:crb} is the minimum possible asymptotic variance of any unbiased estimator; this result is known as the Cram\'er-Rao bound.

For what goes next, it is important to note that the asymptotic approach can be summarized as follows:
\begin{itemize}
\item
First, the estimate is \textit{localized}: $\|\wh\theta_n - \theta_*\|_{\He}^2$ is upper-bounded with the squared ``natural'' norm of the score, $\|\nabla L_n(\theta_*)\|_{\He^{-1}}^2$, which can be controlled by the central limit theorem.
\item
Then, using the second-order Taylor expansion of~$L(\theta)$ around~$\theta_*$, similar behavior is obtained for the excess risk~$L(\wh\theta_n) - L(\theta_*)$.
\end{itemize}

Paying tribute to the clarity and historical significance of the classical asymptotic theory, one should keep in mind that its operating regime~$n \to \infty$ with fixed parameter dimension usually cannot be applied in the modern context. 
The recent works~\cite{donoho2016high,barbier2017phase} extend the classical results to the asymptotic high-dimensional regime~$d \to \infty$ with~$d = O(n)$, analyzing $M$-estimator as the fixed point of the approximate message passing algorithm. However, existing analysis of approximate message passing in finite samples is scarce: the only work we are aware of is~\cite{rush2018finite}, which only considers fixed-design linear regression.
Postponing a more detailed review of related work to Section~\ref{sec:related-work}, let us briefly overview the main approaches in finite-sample analysis.

\subsection{Finite-sample regime and empirical processes}
This work has been motivated by the following question:
\begin{quote}
\centering
{\em For what finite~$n$ the excess risk admits a chi-square type bound akin to~\eqref{eq:crb-prob}?}
\end{quote}
One rather general approach towards answering this question, i.e., addressing the fully finite-sample regime, has been outlined in~\cite{spokoiny2012parametric}, and can be described as follows. 
First, the parameter space~$\Theta$ is divided into the local subset, given as the intersection of~$\Theta$ and the (unit-radius) \textit{Dikin ellipsoid} of~$\theta_*$, 
\begin{equation}
\label{def:dikin-unit}
\Theta_1(\theta_*) := \{\theta \in \R^d: \|\theta - \theta_*\|_{\He} \le 1\},
\end{equation}
and the complement subset~$\Theta \setminus \Theta_1(\theta_*)$.
Then, the second step of the asymptotic approach is replaced with so-called \textit{quadratic bracketing:} the excess risk is ``sandwiched'' on~$\Theta_1(\theta_*)$ between two quadratic forms which correspond to the inflation and deflation of~$\|\theta - \theta_*\|_{\He}^2$.
On the other hand, the first step (localization of the estimate) is done via the control of the event $\{\wh\theta_n \notin \Theta_1(\theta_*)\}$, by bounding the uniform deviations of the empirical risk~$L_n(\theta) - L_n(\theta_*)$ via advanced tools from empirical process theory such as generic chaining~\cite{talagrand2006generic}.
This approach is quite powerful, allowing to derive the counterparts of asymptotic results in the non-asymptotic regime~$n \ge c_\delta \cdot \deff$, where the constant~$c_{\delta}$ only depends on the desired confidence level~$1-\delta$.
However, it requires rather strong \textit{global} assumptions on the pointwise deviations of the empirical risk process, which are necessary to control its uniform deviations, see~\cite[Sections~2.2 and~4]{spokoiny2012parametric}.
Close in spirit to~\cite{spokoiny2012parametric} are the techniques developed in~\cite{chernozhukov2017central} to study Gaussian approximation of the maxima of the sums of i.i.d.~random variables.
The main highlight of~\cite{chernozhukov2017central} is the ability to handle the regime of exponentially large dimensionality, with respect to the sample size, due to the special structure of the statistics under study.
However, much like in~\cite{spokoiny2012parametric}, the techniques of~\cite{chernozhukov2017central} rely on the advanced machinery of empirical processes.

Meanwhile, in the special case of random-design least-squares, finite-sample analysis is way simpler, and heavy-weight machinery of empirical processes is not needed. 
In this case, the problem  is reduced to the control of a {\em single} random matrix, the sample covariance matrix of the design vector, which encapsulates the second-order information about the risk. 
Our primal goal in this work is to extend these ideas to a wider class of models with non-quadratic losses of certain types, including the losses arising in conditional generalized linear models and robust regression.
For these classes of losses, one may carefully exploit their regularity properties, which allows to avoid using the empirical processes machinery -- and the associated \textit{global} conditions -- when localizing the empirical risk minimizer. 
Deferring further discussion of our contributions to Sec.~\ref{sec:outline} and related work to Sec.~\ref{sec:related-work}, let us overview the case of least-squares.

\subsection{Simple case: least-squares}
An original approach introduced in~\cite{hsu2012random} allows to obtain finite-sample excess risk bounds in the setting of unconstrained least-squares linear regression with random design.
Here,~$\Theta = \R^d$, and the observations take the form $Z=(X,Y)$ where $X \in \R^d$ and $Y \in \R$. 
The goal is to predict \textit{response}~$Y$ as a linear combination of \textit{design}~$X$ with parameter~$\theta \in \R^d$, and one takes~$\ell_Z(\theta)$ to be 
$
\ell_Z(\theta) = \tfrac{1}{2\sigma^2} (Y - X^\T\theta)^2.
$
ERM then reduces to the ordinary least-squares estimator. 
Least-squares correspond to the implicit assumption that the residual~$\veps = Y-X^\T\theta_*$  has Gaussian distribution $\veps \sim \cN(0,\sigma^2)$ with $\sigma > 0$, and is independent of~$X$, which allows to factor out the distribution of~$X$ from the model. 
Note that the rate $O(d/n)$ translates to the well-known minimax rate~$O(d\sigma^2/n)$ for the mean square error $\E[(Y - X^\T\theta)^2] - \sigma^2$. 
Moreover, sometimes the Gaussian assumption on~$\veps$ can be relaxed, and the misspecified situation becomes essentially as favorable as the well-specified one, at least from the asymptotic point of view.
Indeed, normalizing the noise to have unit variance, and~using that
\[
\nabla \ell_Z(\theta_*) = \veps X \quad \text{and} \quad \Var = \E [XX^\T],
\]
we get~$\deff = \E[\veps^2 \|\Var^{-1/2} X\|^2].$
Hence,~$\deff = d$ for any distribution of~$\veps$ with $\E[\veps^2] = 1$, provided that~$\veps$ and~$X$ are independent.
Moreover, assuming that~$Y$ and all one-dimensional marginals~of~$X$ have finite fourth moment, i.e.,
\[
\begin{aligned}
\sqrt{\E[Y^4|X=x]} &\le \kappa_\veps  \E[Y^2|X=x], \;\; \forall x \in \R^d,\\
\sqrt{\E[\langle u, X \rangle^4]} &\le \kappa_{X}  \E[\langle u, X \rangle^2], \;\; \forall u \in \R^d,
\end{aligned}
\]
we can bound~$\deff$ as~$\deff \le \kappa_X \cdot \kappa_\veps \cdot d$. In other words,~$\deff$ and~$d$ are comparable.

Now, the approach of~\cite{hsu2012random} exploits the fact that~$L(\theta)$ is a quadratic form,
\begin{equation}
\label{eq:ave-risk-linear}
L(\theta) - L(\theta_*) = \half \|\theta - \theta_* \|_\Var^2,
\end{equation}
and the empirical risk is a quadratic form corresponding to~$\Varn = \frac{1}{n}\sum_{i=1}^n X_i X_i^\T$:
\begin{equation*}
L_n(\theta)  - L_n(\theta_*) = \half \|\theta - \theta_* \|_{\Varn}^2 + \langle \nabla L_n(\theta_*), \theta-\theta_* \rangle.
\end{equation*}
As such, the \textit{global} curvature information about $L(\theta)$ is encapsulated in a single matrix $\Var$, and we have at our disposal an unbiased estimate~$\Varn$ of this matrix.
This observation allows to dramatically simplify the analysis: it suffices to control the deviations of~$\Varn$ from its expectation, which can be done using the standard tools of random matrix theory. 
In particular, in~\cite{vershynin2010introduction}, see also Theorem~\ref{th:covariance-subgaussian-simple} in Appendix, it is shown that whenever~$X$ is $K$-subgaussian in all directions, and
\begin{equation}
\label{eq:cov-subgaussian-complexity-intro}
n \gtrsim K^4(d + \log(1/\delta)),
\end{equation}
where symbol~$\gtrsim$ hides a constant factor, with probability at least~$1-\delta$ it holds
\begin{equation}
\label{eq:cov-dist-equiv-intro}
\tfrac{1}{2} \|\Delta\|^2_{\Var} \le \|\Delta\|^2_{\Varn} \le 2 \|\Delta\|^2_{\Var}, \quad \forall \Delta \in \R^d.
\end{equation}
In other words, the sample second-moment matrix~$\Varn$ approximates~$\Var$, up to a constant factor, in the sense of the corresponding Mahalanobis distances (in particular,~$\Varn$ is non-degenerate whenever~$\Var$ is).
This result can then be exploited as follows: since~$\nabla L_n(\wh\theta_n) = 0$, and~$\Varn \succ 0$, 
\begin{equation}
\label{eq:quadratic-dual}
\|\wh\theta_n - \theta_* \|_{\Varn}^2 = \|\nabla L_n(\theta_*)\|_{\Var_n^{-1}}^2.
\end{equation}
Using~\eqref{eq:cov-dist-equiv-intro}, this gives
~$
\tfrac{1}{2} \|\wh\theta_n - \theta_* \|_{\Var}^2 \le 2\|\nabla L_n(\theta_*)\|_{\Var^{-1}}^2,
$
which, via~\eqref{eq:ave-risk-linear}, results in
\[
L(\wh \theta_n) - L(\theta_*) \le 2\|\nabla L_n(\theta_*)\|_{\Var^{-1}}^2.
\]
Finally, a non-asymptotic version of~\eqref{eq:crb-prob} is obtained by controlling the squared norm~$\|\nabla L_n(\theta_*)\|_{\Var^{-1}}^2$ under light-tailed (say, subgaussian or subexponential) assumptions on~$\nabla \ell_Z(\theta_*) = \veps X$, through standard concentration inequalities.
These light-tailed assumptions can further be relaxed to fourth-moment assumption, using the generalized median-of-means estimator (see~\cite{hsu2016loss}),
On the other hand, it is much more challenging to get rid of the light-tailed assumption on~$X$, as obtaining covariance estimators with guarantees of the type~\eqref{eq:cov-dist-equiv-intro} under weak moment assumptions is by itself a non-trivial problem. Recently, this problem has been addressed in~\cite{heavy-covariance}, whose authors then proposed an estimator for ridge and ridgeless regression with near-optimal high-probability guarantees under heavy-tailed assumptions on~$X$ (see~\cite[Theorem 6.1]{heavy-covariance}).\footnotemark
\footnotetext{Another possibility is to use a rejection sampling argument similar to the one employed in the proof of our Theorem~\ref{th:crb-mine-complex}. 
This, however, prohibits us from taking small values of the confidence parameter~$\delta$, namely, those decreasing polynomially fast with~$\min(d,n)$, cf.~\eqref{eq:restricted-risk-condition}.}

The remarkable feature of the outlined analysis is that, as soon as the curvature of~$L(\theta)$, as given by~$\Var$, is reliably estimated, the localization step is \textit{``automatic''} due to~\eqref{eq:quadratic-dual}.
The only requirement is for~$n$ to reach the lower bound~\eqref{eq:cov-subgaussian-complexity-intro}, so that one could relate the norms~$\| \cdot \|_{\Varn}$ and~$\|\cdot\|_\Var$.
The crucial fact here is that for the quadratic loss, the curvature information is \textit{global}, i.e., is encoded in a single matrix.
However, for more general losses this is not the case, and there seems to be no direct way of extending the above argument. 
As discussed before, the known solution to the problem~\cite{spokoiny2012parametric} involved localization of the estimate, through the control of the \textit{global} uniform deviations of~$L_n(\theta)$, to a neighborhood of~$\theta_*$ where a local quadratic approximation can be used; this approach requires global assumptions on the pointwise deviations of~$L_n(\theta)$.
Yet, we will show that in some other models beyond linear regression with quadratic loss, the \textit{local} analysis suffices to provide localization of the estimate,
and the complicated and opaque localization step using generic chaining, as in~\cite{spokoiny2012parametric}, can be circumvented.

\subsection{Contributions and outline}
\label{sec:outline}

Our analysis applies to \textit{linear prediction models}: observing a pair~$Z=(X,Y)$ with~$X\in \cX \subseteq \R^d$ and~$Y\in\cY \subseteq \R$, one predicts~$Y$ through linear combination~$\eta=X^\T\theta$ with~$\theta\in\Theta\subseteq\R^d$.
Accordingly, we consider losses given by
\[
\ell_Z(\theta) = \ell(Y,X^\T\theta)
\]
for some function~$\ell: \cY \times \R \to \R$ assumed to be sufficiently smooth in its second argument. 
This subsumes regression~($\Y = \R$) and classification~($\Y = \{0,1\}$). 
Moreover, we assume the ability to bound the third derivative of~$\ell(y,\eta)$ with respect to~$\eta$ via the second derivative in two alternative ways, as will be detailed in Section~\ref{sec:assumptions}. 
Such \textit{self-concordance} assumptions originate from~\cite{nemirovski1994interior}, where they were used in the context of interior-point methods; later on, they were modified and used in the statistical analysis of logistic regression~\cite{bach2010self-concordant,bach-moulines}. We consider both variants of self-concordance in our analysis, and show that the canonical self-concordance assumption, introduced in~\cite{nemirovski1994interior}, leads to somewhat better bounds on the critical sample size than its modification suggested in~\cite{bach2010self-concordant} (see Sections~\ref{sec:basic}--\ref{sec:improved}). 
In addition to self-concordance of the loss, we make some assumptions on the \textit{local} behavior of the gradient and Hessian of the empirical risk at the population risk minimizer~$\theta_*$, or its neighbordhood given by the unit Dikin ellipsoid~\eqref{def:dikin-unit} of the population risk at~$\theta_*$.
To prove our main results (cf.~Theorems~\ref{th:crb-mine-improved}--\ref{th:crb-fake-improved}), we carefully combine these assumptions through a non-standard covering argument, which allows us to control the uniform deviations of~$\nabla^2 L_n(\theta)$ from~$\nabla^2 L(\theta)$ over the Dikin ellipsoid, and implies localization of the estimator.
We mention once again that \textit{global} assumptions in the vein of~\cite{spokoiny2012parametric} about the deviations of the empirical risk, its gradient and Hessian can be avoided by using self-concordance. 

Our framework includes random-design least-squares linear regression as a baseline. 
However, as we show in Section~\ref{sec:assumptions}, it is in fact much more general.
First, it encompasses some conditional \textit{generalized linear models} with random design.
Here we find that both versions of self-concordance are related to natural assumptions about the moments of~$Y$, and discover several generalized linear models amenable to our analysis, including logistic regression.
Second, we can address some common losses in \textit{robust estimation}, which turn out to be pseudo self-concordant in the sense of~\cite{bach2010self-concordant}. 
Moreover, we show how to slightly modify these losses to make them \textit{canonically} self-concordant, while preserving their first- and second-order structure.
According to our theory, this leads to the improved statistical performance of the $M$-estimator, as characterized by the sufficient sample size to reach the asymptotically optimal rate for the excess risk.

Our analysis carries out the following plan.
First, the local assumptions allow to make sure that starting from the certain sample size, the sample Hessian
\[
\He_n = \nabla^2 L_n(\theta_*)
\]
approximates the true Hessian\dimacorr{$\He = \E[\ell''(Y,X^\T\theta_*)XX^\T]$}~{$\He = \nabla^2 L(\theta_*)$} up to a constant factor, completely analogous to the case of least squares. 
After that, self-concordance comes at play. 
First, using simple analytic arguments, we prove that with high probability,~$\nabla^2 L_n(\theta)$ remains nearly constant in a Dikin ellipsoid of a smaller radius of order~$O(1/\sqrt{d})$, leading to a larger critical sample size than in the case of least-squares. 
We then use these initial results to prove that under slightly stronger -- but still local -- assumptions,~$\nabla^2 L_n(\theta)$ in fact remains constant in a \textit{constant-radius} Dikin ellipsoid, leading to the critical sample size comparable to that in least-squares (cf.~Theorems~\ref{th:crb-mine-improved}--\ref{th:crb-fake-improved}). 
This is done via a simple but somewhat non-trivial covering argument, which might be of independent interest.


Let us now give a more detailed overview of the obtained results.


In Section~\ref{sec:basic}, we show that for \textit{pseudo} self-concordant losses~\cite{bach2010self-concordant}, 
the asymptotically optimal (up to a costant factor) bound on the excess risk is guaranteed when the sample size reaches~$O(\rho \cdot d \cdot \deff)$ up to a logarithmic factor in~$1/\delta$,
where~$\rho$ is the \textit{local curvature} parameter linking~$\He$ and~$\Cov := \E[XX^\T]$ by
\[
\Cov \prccq \rho \He.
\] 
Moreover, for \textit{canonically} self-concordant losses in the sense of~\cite{nemirovski1994interior}, the dependency on~$\rho$ can be eliminated, and the critical sample size becomes~$ O(d \cdot \deff)$. 
We now give a simplified (and slightly vulgarized) formulation of these two results.

\begin{theorem}[Simplified formulation of Theorems~\ref{th:crb-fake-complex}--\ref{th:crb-mine-complex}]
\label{th:crb-preview}
Assume that~$\ell(y,\cdot)$ is self-concordant, for any~$y$, in the sense of Nesterov and Nemirovski~\cite{nemirovski1994interior}, i.e., 
\begin{equation}
\label{eq:sc-preview}
|\ell'''_{\eta}(y,\eta)|\le 2\ell''_{\eta}(y,\eta)^{3/2}, \quad \forall \eta \in \R,
\end{equation}
and that~$\ell'_{\eta}(Y,X^\top \theta_*) X =: \nabla \ell_Z(\theta_*)$ and~$\ell''_{\eta}(Y,X^\top \theta_*)^{1/2} X$ are subgaussian. 
Then
\begin{equation}
\label{eq:crb-all-preview}
L(\wh\theta_n) - L(\theta_*) 
\lsim \|\wh\theta_n - \theta_*\|_{\He}^2 
\lsim \|\nabla L_n(\theta_*)\|_{\He^{-1}}^2
\lsim \frac{\deff\log\left({e}/{\delta}\right)}{n}
\end{equation}
with probability~$\ge 1-\delta,$~$\delta \in (0,1)$, as long as
\begin{equation}
\label{eq:n-sc-complex-preview}
n \gsim \deff  \cdot d \cdot \log\left({ed}/{\delta}\right),
\end{equation}
where~$\lsim, \gsim$ hide constants.  
Moreover, if the loss satisfies the modified assumption
\begin{equation}
\label{eq:psc-preview}
|\ell'''_{\eta}(y,\eta)|\le \ell''_{\eta}(y,\eta), \quad \forall \eta \in \R
\end{equation}
instead of~\eqref{eq:sc-preview},~$X$ is as well subgaussian, and~$\Cov \prccq \rho  \He$, then~\eqref{eq:crb-all-preview} is valid once
\begin{equation}
\label{eq:n-psc-complex-preview}
n \gsim \rho \cdot \deff  \cdot d \cdot \log\left({ed}/{\delta}\right).
\end{equation}
\end{theorem}
While the only available generic upper bound on~$\rho$ is given by the inverse of the \textit{global} strong convexity modulus of the loss, and can be very large or even infinite in the case of unbounded predictors,
the \textit{actual} value of~$\rho$ depends on the data distribution, and is moderate when this distribution is not chosen adversarially, as discussed in~\cite[Sections~3.1,~4.2]{bach-moulines} and in our Section~\ref{sec:ass-distrib}. In this vein, we show in Appendix~\ref{sec:subgaussian-check} that
$
\rho \lsim 1+\|\theta_*\|_{\Cov}^3
$
in logistic regression with Gaussian design~$X \sim \cN(0, \Cov)$.
Motivated by this result, we propose canonically self-concordant losses for classification and robust regression in Section~\ref{sec:ass-qsc}.

In Section~\ref{sec:improved}, we obtain improved bounds for the critical sample size, scaling {\em near-linearly in the parameter dimension,} under slightly stronger assumption on the data distribution. 
Essentially, we now require that the \textit{calibrated design}
$
\wt X(\theta) := [\ell''(Y,X^\T\theta)]^{1/2}X,
$ 
is subgaussian uniformly over~$\theta$ in the set
\begin{equation}
\label{def:dikin}
\Theta_r(\theta_*) := \{\theta: \| \theta - \theta_*\|_{\He} \le r\}
\end{equation}
-- the $r$-radius \textit{Dikin ellipsoid} of the population risk at~$\theta_*$.
specifically, we require~$r = 1$ for canonically self-concordant losses, and~$r = 1/\sqrt{\rho}$ for pseudo self-concordant losses. 
This assumption is still local, and is not much more restrictive in some practical situations: in Appendix~\ref{sec:subgaussian-check} we show, informally, that in the case of logistic regression with Gaussian design, the tails of~$\wt X(\theta)$ over~$\theta \in \Theta_{1/\sqrt{\rho}}(\theta_*)$ are not heavier than those of~$\wt X(\theta_*)$ (see Proposition~\ref{prop:logistic-case-study}).
It allows to control the uniform deviations of the empirical Hessians from their means on~$\Theta_r(\theta_*)$, leading to the reduced sample size as per the following result.

\begin{theorem}[Simplified formulation of Theorems~\ref{th:crb-mine-improved}--\ref{th:crb-fake-improved}]
\label{th:crb-fake-improved-preview}
In addition to the premise of Theorem~\ref{th:crb-fake-improved-preview}, assume that the vectors~$\wt X(\theta) := [\ell''(Y,X^\T\theta)]^{1/2}X$ are subgaussian for~$\theta \in \Theta_r(\theta_*)$, cf.~\eqref{def:dikin}, with~$r = 1$ in the case of~\eqref{eq:sc-preview} 
and~$r = 1/\sqrt{\rho}$ in the case of~\eqref{eq:psc-preview}. 
Then bounds~\eqref{eq:crb-all-preview} in Theorem~\ref{th:crb-preview} are valid once
\begin{equation}
\label{eq:critical-improved-intro}
n \gsim 
\begin{cases}
\quad \; \deff \vee d \log d 
	& \text{under}~\eqref{eq:sc-preview}, \\
\rho \cdot \deff \vee d\log d 
 	& \text{under}~\eqref{eq:psc-preview}.
\end{cases}
\end{equation} 
\end{theorem}

The main technical challenge when proving this result is the fact that, while (pseudo) self-concordance of the \textit{population} risk over~$\Theta_r(\theta_*)$ with appropriate~$r$ follows from that of the loss function (by relating the directional derivatives of~$L(\theta)$ to the corresponding moments of~$\wt X(\theta)$), this fails to hold for the \textit{empirical} risk. 
Hence, we cannot uniformly control its Hessians on~$\Theta_r(\theta_*)$ by simply integrating the directional third derivatives of the empirical risk.
Instead, such control is attained by observing that self-concordance of the losses suffices to control Hessians in a smaller Dikin ellipsoid with radius~$O(1/d^{\kappa})$ for some~$\kappa \ge 1/2$, and combining this observation with a somewhat non-standard covering argument. 
We hypothesize that the bounds~\eqref{eq:critical-improved-intro} are optimal up to the~$\log(d)$ factor, i.e., ERM cannot provably achieve the nonasymptotic version of~\eqref{eq:crb-prob} in the regime where~$n$ is sublinear in~$\deff$ or~$d$. 
This hypothesis is motivated by the observation that~$n \gsim d$ is necessary to estimate the local norm~$\|\cdot\|_{\He}$, whereas~$n \gsim \deff$ is necessary to have~$\|\nabla L_n(\theta_*)\|_{\He} \le c$, which, in turn, allows to localize~$\wh\theta_n$ near~$\theta_*$.

In Section~\ref{sec:sparsity}, we extend some of the above results to the high-dimensional setup. 
Specifically, we obtain analogues of Theorem~\ref{th:crb-preview} for~$\ell_1$-regularized $M$-estimators, assuming that the optimal parameter~$\theta_*$ is $\s$-sparse, the matrices~$\Gr$ and~$\He$ are bounded in the operator norm, and the design is uncorrelated (the last assumption can in principle be relaxed).  
In the case of pseudo self-concordant losses (Theorem~\ref{th:crb-sparse-fake}), we replace~$\max(d,\deff)$ with~$O(\rho\s\log(d))$, both in the error rates and the minimal sample size requirements. 
Unfortunately, for canonically self-concordant losses, we do not get the expected improvement by~$\rho$ (see Theorem~\ref{th:crb-sparse-true}), and the bounds essentially remain the same as in the case of pseudo self-concordance. 
This, however, is not surprising, since sparsity and~$\ell_1$-regularization depend on the choice of the basis, and are not affine-invariant, which prevents us from fully exploiting self-concordance in the analysis by forcing to rely on the usual~$\ell_1$-~and~$\ell_2$-norms instead of~$\|\cdot\|_{\He}$. 
More detailed discussion of these results and their comparison with related work is deferred to Section~\ref{sec:sparsity}.


\subsection{Notation}
We write~$f \lesssim g$ or $f = O(g)$ to state that~$f(\cdot) \le Cg(\cdot)$ for any admissible arguments of~$f(\cdot)$,~$g(\cdot)$ and some constant~$C > 0$; analogously for~$f \gsim g$ or~$f = \Omega(g)$.
Notation~$f \approx g$ means~$f \lsim g \lsim f$.
$[n]$ is the set of integers~$\{1, 2, ..., n\}$. Throughout,~$\theta_*$ is the unique minimizer of $L(\theta)$, 
Similarly,~$\wh\theta_n$ is the minimizer of~$L_n(\theta)$, which will be (provably) unique with high probability in all cases.
Random vectors are denoted with capital letters (such as~$Z$), and matrices with bold capital letters (such as~$\bA$).~$\Id_d$ is the~$d \times d$ identity matrix. 
$\bA^\T$ is the transpose of~$\bA$. 
For two square matrices~$\bA_1,\bA_2$ of the same size, we write~$\bA_1 \prec \bA_2$ (resp.,~$\bA_1 \prccq \bA_2$) when~$\bA_2-\bA_1$ is positive (semi)definite.
We denote with~$\|\cdot\|_p$ the~$\ell_p$-norm on~$\R^d$ and the Schatten~$\ell_p$-norm of a matrix;
in particular,~$\|\bA\|_2$ is the Frobenius and~$\|\bA\|_\infty$ the operator norm. 
For~$\bA \succeq 0$,  we define the seminorm~$\|\theta\|_{\bA} := \|\bA^{1/2} \theta \|_2$. 

\section{Assumptions and examples}
\label{sec:assumptions}

Before introducing the assumptions, we remind that the loss~$\ell_Z: \Theta \to \R$ is modeled as~$\ell_Z(\theta) = \ell(Y,X^\T\theta)$ for some function~$\ell(y,\eta)$ on~$\cY \times \Rp$, where~$\cY$ is a subset of~$\R$, and~$\Rp$ is allowed to be either~$\R$ or the ray $\R^+$ of strictly positive numbers, which allows to encompass the exponential response model (cf.~Section~\ref{sec:ass-qsc}).
We refer to both~$\ell_Z(\theta)$ and~$\ell(y,\eta)$ as \textit{the loss}; which of the two we mean is clear from context.
The derivatives of~$\ell(y,\eta)$ are with respect to~$\eta$. 

\subsection{Self-concordance assumptions}
\label{sec:ass-qsc}
Let us introduce the assumptions related purely to the loss, rather than to the data distribution. 
Our standing assumption, which we silently use later on, is that the loss~$\ell_z(\cdot)$ is three times differentiable and convex on~$\Theta$ for any~$z \in \Z$.

We first present the assumption of~\textit{pseudo self-concordance}, introduced in~\cite{bach2010self-concordant} for the analysis of logistic regression.
The reader may refer to~\cite{sun2017generalized,tran2015composite,bach-moulines} for the uses of generalized self-concordance in the context of quasi-Newton algorithms.
\begin{customass}{SCa}
\label{ass:gsc-fake}
For any $y \in \Y$ and $\eta \in \Rp$, the loss satisfies 
\[
|\ell'''(y,\eta)| \le \ell''(y,\eta).
\]
\end{customass}

We also consider the \textit{canonical} self-concordance assumption first introduced in~\cite{nemirovski1994interior} in the context of interior-point algorithms. 
The constant~$2$ is standard in the literature, but can be replaced with arbitrary constant by rescaling the loss. 

\begin{customass}{SCb}
\label{ass:gsc-true}
For any $y \in \Y$ and $\eta \in \Rp$, the loss satisfies 
\[
|\ell'''(y,\eta)| \le 2[\ell''(y,\eta)]^{3/2}.
\]
\end{customass}


We now present some examples in which either of these assumptions is satisfied. 

\subsubsection{Generalized linear models over canonical exponential family}  
In generalized linear models (GLM) with canonical link function~(\cite{mccullagh1989generalized}), one has
\begin{equation}
\label{def:glm}
\ell(y,\eta) = - y\eta + a(\eta) - b(y),
\end{equation}
where the {\em cumulant}~$a(\eta): \Rp \to \R$ normalizes~$-\ell(y,\eta)$ to be a log-likelihood:
\[
a(\eta) = \log \int_{\cY} \exp(y\eta + b(y)) \, \textup{d}y.
\]
With~$\eta = X^\T\theta$, we have a conditional GLM for $Y$ given~$\eta = X^\T\theta$. 

Note that the second and third derivatives of $\ell(y,\eta)$ with respect to $\eta$ coincide with those of $a(\cdot)$, hence $\ell$ satisfies the basic smoothness/convexity assumption whenever $a(\cdot)$ is three times differentiable (as such,~$a(\cdot)$ must be convex). 
In fact, the cumulant derivatives correspond to the central moments of~$Y$:
\[
a'(\eta) = \E_{\eta}[Y], \quad
a''(\eta) = \E_{\eta}[(Y-\E_{\eta}[Y])^2], \quad
a'''(\eta) = \E_{\eta}[(Y- \E_{\eta}[Y])^3],
\] 
where~$\E_\eta[\cdot]$ is expectation with respect to the distribution with negative log-likelihood given by~\eqref{def:glm}. 
Hence, Assumption~\ref{ass:gsc-true} states precisely that the \textit{skewness} of the model distribution is bounded by a constant uniformly over $\eta \in \Rp$. 
This is the case in the \emph{exponential response} GLM where~$Y \sim \Exp(\eta)$ and~$a(\eta) = -\log(\eta)$ defined on~$\Rp = \R^+$.

On the other hand, Assumption~\ref{ass:gsc-fake} is satisfied whenever the third absolute central moment of~$Y$ is uniformly bounded by the variance of~$Y$, without the~${3}/{2}$ power. This is the case in \emph{Poisson regression:}~$Y \sim \text{Poisson}(\lambda)$ with~$\lambda = \exp(\eta)$; then~$b(y) = -\log(y!)$ and~$a(\eta) = \exp(\eta)$ so that~$a'''(\eta) = a''(\eta)$. This model is appropriate for count data where the rate of arrival itself depends multiplicatively on the canonical parameter~$\eta$; see, e.g.,~\cite{christensen2006log}. 
Perhaps most importantly, Assumption~\ref{ass:gsc-fake} is automatically satisfied in \textit{logistic regression} in which~$\cY = \{0,1\}$, and~$Y$ is modeled as a Bernoulli random variable with~$\Prob_{\eta}\{Y = 1\} = \sigma(\eta)$~where~$\sigma(\eta) = 1/(1+e^{-\eta})$ is the sigmoid function. In this case,~$a(\eta) = \log(1+e^\eta)$,~and one can verify that~$a'''(\eta) = a''(\eta)(1-2\sigma(\eta))$, so Assumption~\ref{ass:gsc-fake} is satisfied since~$|\sigma(\eta)| < 1$ for any~$\eta \in \R$.
Another way to see this is by looking at the cumulant and using that~$\cY = \{0,1\}$:
\[
|a'''(\eta)| \le |Y-\E_\eta[Y]| \cdot \E_{\eta}[(Y- \E_{\eta}[Y])^2] \le \E_{\eta}[(Y- \E_{\eta}[Y])^2] = a''(\eta).
\]


\subsubsection{Robust estimation}
Here, $\cY = \R$, and~$\ell(y,\eta) = \vphi(y-\eta)$ for some \textit{contrast}~$\vphi: \R \to \R$, a function minimized in the origin and usually even.
Crucially,~$\vphi(\cdot)$ must be globally Lipschitz-continuous, which guarantees robustness of the $M$-estimator, see~\cite{huber2011robust}.
On the other hand, from the statistical perspective, one can motivate contrasts that are locally quadratic, i.e., such that~$\vphi''(0)$ exists and is strictly positive, see, e.g.,~\cite{loh2017statistical}.\footnotemark
\footnotetext{However, this condition is \textit{not} necessary for the asymptotic normality of~$M$-estimator. 
For example, the sample median ($\vphi(t) = |t|$) in the model~$y = \theta + \varepsilon \in \R$ is asymptotically normal provided that the density of~$\varepsilon$ does not vanish at~$0$.}
These considerations, along with some minimax optimality results, lead to the Huber loss (see~\cite{huber1964}):
\begin{equation}
\label{def:huber-loss}
\vphi_{\tau}(t) = 
\left\{
\begin{aligned}
&{t^2}/{2}, 
&\quad|t| &\le \tau, \\
&\tau t    - {\tau^2}/{2}, 
&\quad |t| &> \tau.
\end{aligned}
\right.
\end{equation}
The Huber loss is parametrized by~$\tau > 0$, which allows to control the tradeoff between robustness and statistical performance. 
Indeed, on one hand,~$|\vphi'_\tau(t)| \le \tau$ for any $t \in \R$, and we make estimation more robust by decreasing~$\tau$; on the other hand, the variance of the corresponding $M$-estimator usually decreases as~$\tau$. 
However, finite-sample statistical analysis of the Huber loss is complicated by the fact that~$\vphi(t)$ is not $C^3$-smooth. 
This is also unfavorable from the algorithmic perspective, as it complicates the analysis of Newton-type algorithms for the computation of the $M$-estimator.
These issues can be circumvented if one instead uses \textit{pseudo-Huber losses}, which retain the favorable properties of the Huber loss, yet are $C^3$-smooth.
E.g., such are contrasts of the form~$\vphi_\tau(t) = \tau^2\vphi(t/\tau)$ with
\begin{align}
\label{def:pseudo-huber}
\vphi(t) = \log\left(\cosh(t)\right), \quad 
\vphi(t) = \sqrt{1 + {t^2}}-1.
\end{align}
In both cases, the resulting~$\phi''_{\tau}(\cdot)$ satisfies~$\phi''_{\tau}(0) = 1$~for any $\tau > 0$, and~$|\vphi'_{\tau}(t)| \le \tau$ for any $t \in \R$.
Moreover, simple algebra shows that both functions in~\eqref{def:pseudo-huber} satisfy Assumption~\ref{ass:gsc-fake} up to~$c = 3$, whence
$
|\vphi_{\tau}'''(t)| \le \tfrac{3}{\tau}\phi_\tau''(t).
$
As such, our theory is applicable to both these losses if they are properly renormalized. 

\subsubsection{Novel self-concordant losses}
\begin{figure}[t]
\begin{center}
\begin{minipage}{0.49\textwidth}
\centering
\includegraphics[width=0.96\textwidth]{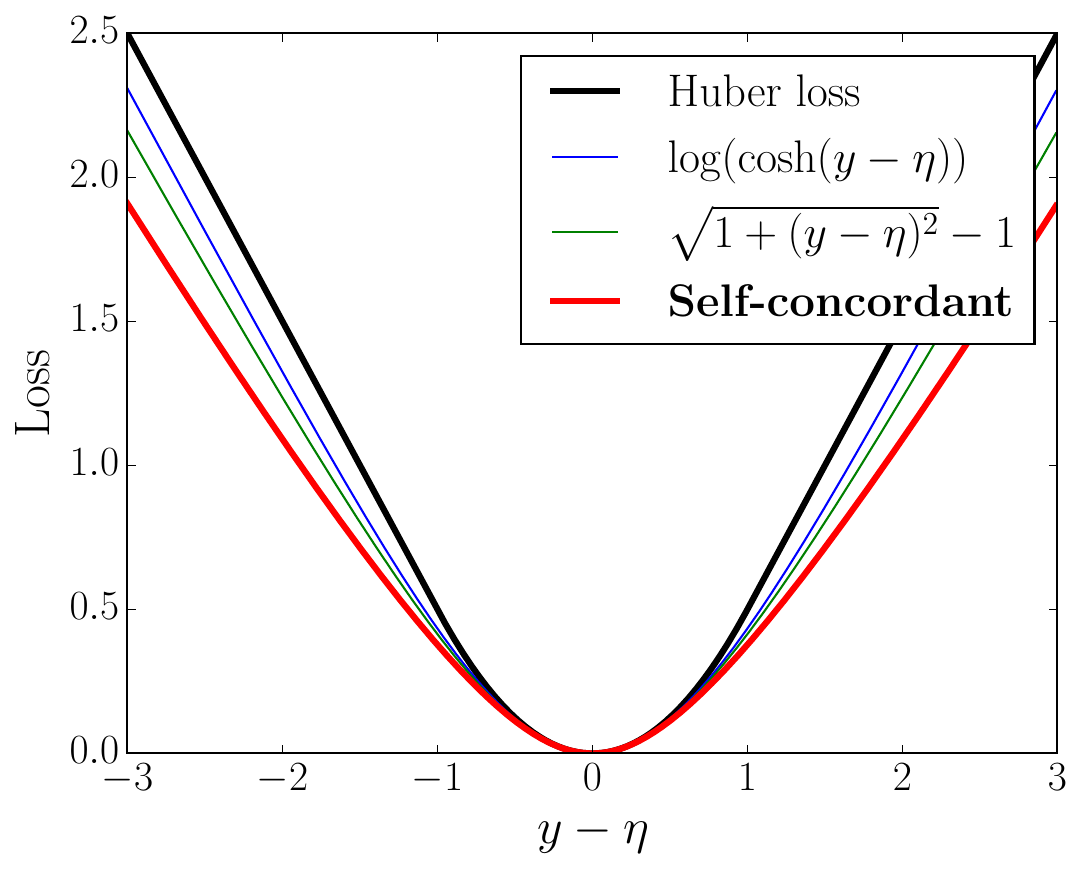}
\end{minipage}
\begin{minipage}{0.49\textwidth}
\centering
\includegraphics[width=0.96\textwidth]{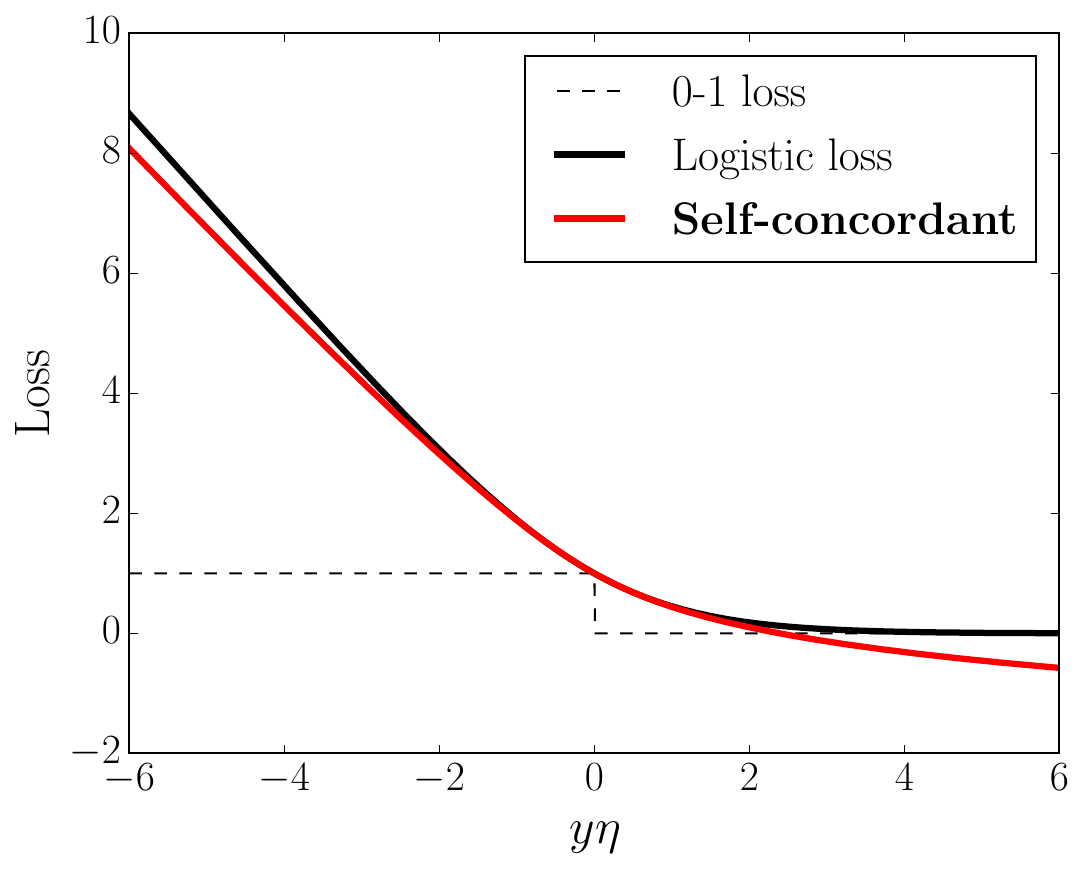}
\end{minipage}
\caption{{\em Left:} self-concordant pseudo-Huber loss, cf.~\eqref{def:our-robust-loss}. {\em Right}: self-concordant analogue of the logistic loss suitable for classification, cf.~\eqref{def:our-class-loss}.
Although our classification loss does not upper-bound the~0-1 loss on~$\R^+$, it can be lower-bounded as~$\Omega(-\log(y\eta))$ whenever~$y\eta > 0$.}
\label{fig:robust}
\end{center}
\end{figure}
Here we construct a \textit{canonically self-concordant} (up to a constant) pseudo-Huber loss, and similarly, a canonically self-concordant loss suitable for classification and similar to the logistic loss.
This construction is motivated by the observation that our theory has a somewhat tighter guarantee on the critical sample size (after which the fast rates occur) under the canonical self-concordance assumption. (However, in practice the situation might be different as we explore in Sec.~\ref{sec:numerical}.)
The key idea in this construction is that self-concordance is preserved under convex conjugation (see, e.g.,~\cite[Prop.~6]{sun2017generalized}), while at the same time one can control the range of the function through the domain of its convex conjugate (see~\cite{rockafellar1970convex}). 
Namely, consider~$\phi: (-1,1) \to \R^+$:
\begin{equation}
\label{eq:log-barrier}
\phi(u) = -\log(1-u^2)/2,
\end{equation}
that is, the negative log-barrier on~$[-1,1]$ normalized by~$\phi''(0) = 1$.
Its convex conjugate~$\vphi(t)$ can be explicitly computed:
\begin{equation}
\label{def:our-robust-loss}
\vphi(t) = \frac{1}{2} \left[ \sqrt{1 + {4t^2}} - 1 + \log\left(\frac{\sqrt{1+4t^2}-1}{2t^2}\right)\right].
\end{equation}
Note that~$\phi(\cdot)$ is even, satisfies~$\phi''(0) = 1$ and
$
|\phi'''(u)| \le 2\sqrt{2}[\phi''(u)]^{3/2},
$
since both functions~$\log(1 \pm u)$ satisfy Assumption~\ref{ass:gsc-true}.
By simple calculations detailed in Appendix~\ref{sec:extra-proofs},~$\vphi(t)$ defined in~\eqref{def:our-robust-loss} has all the same properties.
On the other hand, we have~$|\vphi'(t)| < 1$ since~$\phi(u)$ is a barrier on~$[-1,1]$.
Thus, $\vphi(t)$ has all properties desired for a robust loss, and besides is canonically self-concordant (albeit with constant $2\sqrt{2}$ instead of $2$). 
As illustrated in Figure~\ref{fig:robust}, the quality of approximating the Huber loss for the new loss is essentially as good as for the commonly used pseudo-Huber losses~\eqref{def:pseudo-huber}.
The new loss has a rescaled version~$\vphi_\tau(t) = \tau^2\vphi(t/\tau),$ for which~$\vphi_\tau''(0) = 1$,~$|\vphi_\tau'(t)| \le \tau$, and
$
|\vphi_\tau'''(t)| \le {(2/\tau)[\vphi_\tau''(t)]^{3/2}}.
$

Similarly, we can construct a self-concordant counterpart of the logistic loss suited for classification. In this case, we take~$\phi(u) = -\log(u(1+u))/2$, the normalized log-barrier of~$[-1,0]$,
whose convex conjugate is
\[
\phi^*(t) = \frac{1}{2}\left[-1-t + \sqrt{1+t^2} + \log\left(\frac{\sqrt{1 + t^2} - 1}{2t^2}\right)\right].
\]
The derivative of~$\phi^*(\cdot)$ must belong to~$(-1,0)$, and is canonically self-concordant (up to a constant) by the same reasoning as before. 
By rescaling and shifting it, we obtain the loss
\begin{equation}
\label{def:our-class-loss}
\ell(y,\eta) = 2+\frac{1}{2 \log 2}\left[-1-y\eta + \sqrt{1+(y\eta)^2} + \log\left(\frac{\sqrt{1 + (y\eta)^2} - 1}{2(y\eta)^2}\right)\right]
\end{equation}
which can be understood as a convex surrogate of the~$0$-$1$ loss similar to the logistic loss, see Figure~\ref{fig:robust}. 
However, this loss is negative for~$y\eta > 2.4,$ and therefore does not globally upper-bound the~0-1 loss. 
Fortunately, its right branch can be lower-bounded with~$\Omega(-\log(y\eta))$, so the resulting ``leakage'' is insignificant. 
On the other hand, this defect is unavoidable: one can show that a canonically self-concordant function on~$\R^+$ cannot have a horizontal asymptote: this would imply~$\vphi''(t) \to_{t \to +\infty} 0$, contradicting Assumption~\ref{ass:gsc-true} reformulated as~$|([\vphi''(t)]^{-1/2})'| \le 1$. 
Finally, let us remark that the ``leakage'' effect can also be quantified using the so-called calibration theory~\cite{bartlett2006convexity}.

\subsection{Distribution assumptions}
\label{sec:ass-distrib}

\paragraph{Preliminaries.}
We now introduce additional assumptions that ar e related to the distribution of the design scaled by the derivatives of the loss at the true optimum~$\theta_*$. 
All these assumptions are fully \textit{local}, i.e., they only concern the true optimal point~$\theta_*$.
We begin with the basic assumptions. First, we assume the existence of the matrices
\[
\Cov := \E[XX^\T], \quad 
\Gr := \E[\nabla \ell_Z(\theta_*)\nabla \ell_Z(\theta_*)^\T], \quad 
\He := \E[\nabla^2 \ell_Z(\theta_*)];
\]
Generally,~$\Cov \ne \He$ (unless for least-squares), and~$\Gr \ne \He$ (unless in a well-specified model). 
Recall that $\E[\nabla \ell_Z(\theta_*)] = 0$; as such,~$\Gr$ is the covariance matrix of~$\nabla \ell_Z(\theta_*).$
For future reference we also note that, for any~$\theta \in \Theta$, one has 
\begin{equation}
\label{eq:derivatives}
\nabla \ell_Z(\theta) = \ell'(Y,X^\T\theta) X, \quad \nabla^2 \ell_Z(\theta) = \ell''(Y,X^\T\theta) X X^\T.
\end{equation}
We assume that~$X^\T \theta \in \Rp$ for any~$\theta \in \Theta$ and~$X \in \cX$.
This assumption is non-trivial only when~$\Rp = \R^+$ which is of interest in the exponential response model. 
In this case, one can assume~$\Theta \subseteq \R^d_+$ and~$\cX \subseteq \R^d_+$ where~$\R_+^d$ is the positive orthant, or replace the pair~$(\R^d_+, \R^d_+)$ with other pairs of mutually dual convex cones in~$\R^d$.

Following~\cite{vershynin2010introduction}, we use the formalism of subgaussian, or~$\psi_2$-norms.
The $\psi_2$-norm~$\|\xi\|_{\psi_2}$ of a random variable~$\xi \in \R$ can be defined in a number of equivalent ways (see Appendix~\ref{sec:prob-tools}), e.g., as
$
\|\xi\|_{\psi_2} := \{\sigma > 0 : \E [e^{\xi^2/\sigma^2}] \le e \}.
$
This definition extends to random vectors~$Z \in \R^d$ in a standard way: 
\[
\|Z\|_{\psi_2} := \sup \{ \| \langle Z, \theta \rangle \|_{\psi_2}: \|\theta\|_2 \le 1 \}.
\]
In other words, $\|Z\|_{\psi_2}$ is the maximal $\|\cdot\|_{\psi_2}$-norm for all one-dimensional marginals of~$Z$. 
See Appendix~\ref{sec:prob-tools} on more details on subgaussian random variables.



\begin{customass}{D0}
\label{ass:design-subg}
The decorrelated design is subgaussian: it holds
\[
\|\Cov^{-1/2} X \|_{\psi_2} \le K_0.
\]
\end{customass}
Assumption~\ref{ass:design-subg} is often satisfied with a constant~$K_0$ not depending on~$n$ or~$d$.
For example, this is the case for zero-mean Gaussian design $X \sim \cN(0,\Cov)$, or design with independent Bernoulli components.
Moreover, it can be shown that affine transformation of the design~$X$ that satisfies Assumption~\ref{ass:design-subg} also satisfies it, with at worst twice larger~$K_0$ (see Lemma~\ref{lem:affine-comparison} in Appendix).

\begin{customass}{D1}
\label{ass:first-subg}
The decorrelated loss gradient at~$\theta_*$ is subgaussian:
\[
\| \Gr^{-1/2} \nabla \ell_Z(\theta_*)\|_{\psi_2} \le K_1.
\]
\end{customass}
Note that Assumption~\ref{ass:first-subg} can be reformulated in terms of the design vector scaled by the loss derivative at~$\theta_*$ since~$\nabla \ell_Z(\theta_*) = \ell'(Y,X^\T\theta_*) X$.
Similarly, we can consider the random vector
\begin{equation}
\label{def:calibrated-design}
\csX := [\ell''(Y,X^\T\theta_*)]^{1/2}X
\end{equation}
which we call the~\textit{calibrated design}.
Note that~$\csX$ is linked with~$\He$ by~$\E[\csX \csX^\T] = \He$, cf.~\eqref{eq:derivatives}.
As stated next, we assume that the calibrated design is subgaussian. 
This allows to control the deviations of~$\He_n$ using Theorem~\ref{th:covariance-subgaussian-simple} in Appendix.

\begin{customass}{D2}
\label{ass:second-subg}
The calibrated design~$\csX := [\ell''(Y,X^\T\theta_*)]^{1/2}X$ satisfies
\[
\| \He^{-1/2} \csX\|_{\psi_2} \le K_2.
\]
\end{customass}
Assumption~\ref{ass:second-subg} can be reformulated in terms of the loss Hessian~$\nabla^2 \ell_Z(\theta_*)$ due to~\eqref{eq:derivatives}.
However, this formulation does not give new ideas, and we omit it.
\begin{remark}
\label{rem:subgaussian-norm-lower}
The quantities~$K_0$,~$K_1$,~$K_2$ are necessarily bounded with some absolute constant \textit{from below}.
This fact follows from the moment characterization of the $\psi_2$-norm (Item~2 of Lemma~\ref{lem:subg-properties} in Appendix), combined with the bound~$(\E|\xi|^4)^{1/4} \ge (\E|\xi|^2)^{1/2}$ for any random variable~$\xi \in \R$, and allows to simplify the formulation of the subsequent results.
\end{remark}
\begin{remark}
\label{rem:distrib-restrictive}
Assumptions~\ref{ass:first-subg}--\ref{ass:second-subg} are quite restrictive, even under Assumption~\ref{ass:design-subg}.
In particular, in GLMs with canonical link function (cf.~Section~\ref{sec:ass-qsc}), the calibrated design at point~$\theta_*$ is given by~$\wt X(\theta) = [a''(X^\T\theta)]^{1/2}X$ where~$a(\eta)$ is the cumulant function. 
The transform~$[a''(X^\T\theta)]^{1/2}$ that scales~$X$ along a direction~$\theta$ can be highly-non-linear, breaking subgaussianity for~$\wt X(\theta)$. For example, Assumption~\ref{ass:second-subg} does not hold in Poisson regression.
Another limitation of our approach is that the constants~$K_1,K_2$ in Assumptions~\ref{ass:first-subg}--\ref{ass:second-subg} can depend on the magnitude of~$\theta_*$. 
In fact, for logistic regression with Gaussian design~$X \sim \cN(0,\Cov)$, one has
\begin{align*}
K_2 &\lsim \log(1+\|\theta_*\|_\Cov) \sqrt{1+\|\theta_*\|_\Cov}.
\end{align*}
This proof of this estimate (see Appendix~\ref{sec:subgaussian-check}) is highly non-trivial, and relies on the Gaussianity of~$X$. We also show that 
\[
K_1 \lsim 1+\|\theta_*\|_\Cov^{3/2}
\]
if the logistic model for~$Y|X$ is well-specified.
This improves to~$K_1 \lsim 1+\|\theta_*\|_\Cov^{1/2}$ if the subgaussian norm~$\|\cdot\|_{\psi_2}$ is replaced with the subexponential norm~$\|\cdot\|_{\psi_1}$ (see~Appendix~\ref{sec:subgaussian-check} and Section~\ref{sec:basic} for details).
In other applications, one should carefully verify Assumptions~\ref{ass:first-subg}--\ref{ass:second-subg}, bounding the constants~$K_1$ and~$K_2$. 
This can be a non-trivial task itself, especially when the distribution of~$X$ is unknown.
\end{remark}

Finally, for pseudo self-concordant losses we need \textit{compatibility} of~$\Cov$ and~$\He$.
\begin{customass}{C}
\label{ass:curvature}
It holds~$\Cov \prccq \rho \He$ for some~$\rho < \infty$. 
\end{customass}
Assumption~\ref{ass:curvature} has already appeared in the statistical analysis of logistic regression in~\cite{bach-moulines}.
Note that the simplest \textit{generic} upper bound for~$\rho$ is  
\begin{equation}
\label{eq:rho-bound}
\rho \le {\left(\inf_{(y,\eta) \in \cY \times \Rp} \ell''(y,\eta)\right)^{-1}},
\end{equation}
and unless~$\ell''(y,\cdot)$ is strictly convex on~$\Rp$ (which is usually not the case), this bound is vacuous.
On the other hand, the infinum in~\eqref{eq:rho-bound} can be taken on the subset of~$\Rp$ corresponding to possible values of~$X^\T\theta_*$, but such bound can still be very conservative: for example, it only gives~$\rho = O(e^{RD})$ in the case of logistic regression with~$\|X\|_2 \le R$ \textit{a.s.} and~$\Theta = \{\theta \in \R^d: \|\theta\|_2 \le D\}$.
However, the \textit{actual} value of~$\rho$ depends on the true distribution of the data, and is usually much smaller, see, e.g., dicsussion in~\cite[Sections~3.1,~4.2]{bach-moulines} for the case of logistic regression.
For example, consider a ``quasi well-specified'' robust regression model:~$\ell(Y,X^\T\theta) = \vphi(Y-X^\T\theta)$ with even contrast~$\vphi(\cdot)$ and unconstrained parameter. Suppose that the true distribution of~$Y$ is given by 
~$Y = X^\T\theta_* + \veps.$
with~$\veps$ being independent from~$X$, zero-mean, and symmetrically distributed. One can check that in this case,~$L(\theta)$ is minimized at~$\theta_*$, and~$\rho = 1/\E[\vphi''(\veps)]$.
On the other hand, the worst-case bounds on~$\rho$ can be enforced if the data distribution is chosen \textit{adversarially}. 
In particular, for logistic regression~\cite{hazan2014logistic} construct an adversarial distribution that enforces~$\rho = \Omega(e^{RD})$ as long as~$n = O(e^{RD})$.

\section{Results under minimal assumptions}
\label{sec:basic}

In this section, we present extensions of the asymptotic deviation bound~\eqref{eq:crb-prob} to the finite-sample regime {\em under minimal assumptions.} 
We then refine these results in Section~\ref{sec:improved}, under a slightly strengthened version of Assumption~\ref{ass:second-subg}, through a more subtle analysis. 
In the proofs, we use some probabilistic tools collected in~Appendix~\ref{sec:prob-tools}; in particular, we use
deviation bounds for the quadratic forms (Theorem~\ref{th:subg-quad}) and for sample covariance matrices (Theorem~\ref{th:covariance-subgaussian-simple}) of subgaussian vectors. 
We also use technical results on (pseudo) self-concordant functions collected in Appendix~\ref{sec:qsc-properties}. Some of them appear to be new, and are of independent interest. 
To improve readability, we defer the proofs to Appendix~\ref{sec:extra-proofs}.

\paragraph{Preliminaries.}
In the results which we are about to present, there is a technical difficulty arising due to the unboundedness of the vectors~$X$ and~$\wt X$, cf.~\eqref{def:calibrated-design}.\footnote{
This issue arises due to working with individual losses; as a result, it does not appear in our refined results, presented in Section~\ref{sec:improved}, in which we analyze the empirical risk ``as a whole''.} 
To this end, we observe that, due to Assumptions~\ref{ass:design-subg} and~\ref{ass:second-subg}, these vectors admit~$O(\sqrt{d})$ high-probability bound on their norms -- more precisely, the events
\[
\cE_0 := \left\{\|X\|_{\Cov^{-1}} \lesssim K_0 \sqrt{d \log \left({e}/{\delta}\right)} \right\}, \quad 
\cE_2 := \left\{ \|\csX\|_{\He^{-1}} \lesssim K_2 \sqrt{d \log \left({e}/{\delta}\right)} \right\}
\]
hold with probability~$\ge 1-\delta$, correspondingly, under~Assumptions~\ref{ass:design-subg} and~\ref{ass:second-subg}.
To exploit this fact, we replace the population risk~$L(\theta)$ with the \textit{restricted risks}:
\begin{equation}
\label{eq:restricted-risks}
L_{\cE_0}(\theta) := \E[\ell_Z(\theta)\ind\left\{ X \in \cE_0 \right\}]; \quad L_{\cE_2}(\theta) := \E[\ell_Z(\theta)\ind\{\csX \in \cE_2 \}], 
\end{equation}
where we exclude from averaging the low-probability outcomes in which the norms of~$X$ and~$\csX$ are too large.  
Provided that~$\delta$ is small enough, we can show that~$\nabla L_{\cE_0}(\theta_*)\approx \nabla L_{\cE_2}(\theta_*) \approx 0$ and~$\nabla^2 L_{\cE_0}(\theta_*) \approx \nabla^2 L_{\cE_2}(\theta_*) \approx \nabla^2 L(\theta_*)$, so that the second-order structure of the population risk is preserved; at the same time, we can now work with~$X$ and~$\wt X$ as if they were almost surely bounded.

We now present our basic result for~$M$-estimators with self-concordant losses.
\begin{theorem}
\label{th:crb-fake-complex}
Let Assumptions~\ref{ass:gsc-fake},~\ref{ass:design-subg},~\ref{ass:first-subg}, \ref{ass:second-subg}, and~\ref{ass:curvature} hold. 
Whenever
\begin{equation}
\label{eq:n-fake-complex}
n \gtrsim \max\left\{K_2^4 \left(d + \log\left({1}/{\delta}\right)\right), \;\; \rho K_0^2 K_1^2 \deff \, d \log\left({ed}/{\delta}\right)\right\},
\end{equation}
with probability at least~$1-\delta$ it holds
\begin{align}
\label{eq:score-finite-complex}
\|\nabla L_n(\theta_*)\|_{\He^{-1}}^2 &\lsim \frac{K_1^2\deff\log\left({e}/{\delta}\right)}{n},\\
\label{eq:crb-finite-complex}
\|\wh\theta_n - \theta_*\|_{\He}^2 &\lsim \|\nabla L_n(\theta_*)\|_{\He^{-1}}^2.
\end{align}
Moreover, one has 
\begin{equation}
\label{eq:restricted-risk-bound-neat}
L_{\cE_0}(\wh\theta_n) - L_{\cE_0}(\theta_*) \lsim \frac{K_1^2\deff\log\left({e}/{\delta}\right)}{n}
\end{equation}
provided that
\begin{equation}
\label{eq:restricted-risk-condition}
\delta \lesssim \min \left\{ \left(\frac{1}{\sqrt{n \log(e\deff)}}\right)^{1 + 1/\log(\deff)}, \left(\frac{1}{K_2^2 d \log(ed)}\right)^{1 + 1/\log(d)}\right\}.
\end{equation}
\end{theorem}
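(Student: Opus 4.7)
I would follow the template of the proof of Theorem~\ref{th:crb-fake-simple}, adapting steps~$\boldsymbol{0^o}$ and~$\boldsymbol{1^o}$ essentially verbatim to obtain~\eqref{eq:score-finite-complex} and~\eqref{eq:crb-finite-complex}, and then replace step~$\boldsymbol{2^o}$ by a truncation argument that delivers~\eqref{eq:restricted-risk-bound-neat}. Step~$\boldsymbol{0^o}$ carries over unchanged: Assumption~\ref{ass:second-subg} and the first bound in~\eqref{eq:n-fake-complex}, together with Theorem~\ref{th:covariance-subgaussian-simple}, give $\tfrac{1}{2}\He \prccq \He_n \prccq 2\He$, and Assumption~\ref{ass:first-subg} with Theorem~\ref{th:subg-quad} yields~\eqref{eq:score-finite-complex}, neither argument using boundedness of the design. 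For~\eqref{eq:crb-finite-complex} I would apply Proposition~\ref{prop:dikin} to $L_n$ at $\theta_*$ in Case~\ref{prop:local-bach} of Proposition~\ref{prop:local}, with $W(\theta) = X_{j(\theta)}$ extracted from~\eqref{eq:line-gsc-fake-n}. The only change from the bounded case is that $\max_{i\in[n]}\|X_i\|_{\He_n^{-1}}^2$ is now controlled probabilistically: Corollary~\ref{cor:subg-sqrt} under Assumption~\ref{ass:design-subg}, combined with a union bound over $i\in[n]$, Assumption~\ref{ass:curvature} and the Hessian concentration, gives
\[
\max_{i\in[n]}\|X_i\|_{\He_n^{-1}}^2 \lesssim \rho K_0^2\, d\, \log(en/\delta)
\]
with probability at least $1-\delta$. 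Combined with~\eqref{eq:score-finite-complex}, the Dikin condition of Proposition~\ref{prop:dikin} is then implied by the second lower bound in~\eqref{eq:n-fake-complex} (the extra logarithmic slack is absorbed into $\log(ed/\delta)$).

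\textbf{Restricted risk via truncation.} The upper bound~\eqref{eq:restricted-risk-bound-neat} replaces step~$\boldsymbol{2^o}$. That step applied Proposition~\ref{prop:local}\ref{prop:local-bach} to $L$ itself, which here fails since $\sup_{x\in\cX}|\lang x,\theta-\theta_*\rang|$ is unbounded under Assumption~\ref{ass:design-subg}. I would apply Proposition~\ref{prop:local}\ref{prop:local-bach} to $L_{\cE_0}$ instead: on the event $\cE_0$ the design is deterministically bounded by $O(K_0\sqrt{d\log(e/\delta)})$, so the supremum in~\eqref{eq:line-gsc-fake-ave}, taken only over $\cE_0$, is finite. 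This yields an upper bound on
\[
L_{\cE_0}(\wh\theta_n) - L_{\cE_0}(\theta_*) - \lang \nabla L_{\cE_0}(\theta_*), \wh\theta_n - \theta_* \rang
\]
of the form $C \|\wh\theta_n - \theta_*\|_{\nabla^2 L_{\cE_0}(\theta_*)}^2$, exactly in the spirit of~$\boldsymbol{2^o}$ of Theorem~\ref{th:crb-fake-simple}.

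\textbf{Main obstacle.} Because $\theta_*$ minimizes $L$ but \emph{not} $L_{\cE_0}$, the hard part is to show that the two ``truncation biases''
\[
\nabla L_{\cE_0}(\theta_*) = -\E[\nabla \ell_Z(\theta_*)\,\ind_{\cE_0^c}(X)], \qquad \nabla^2 L_{\cE_0}(\theta_*) - \He = -\E[\nabla^2 \ell_Z(\theta_*)\,\ind_{\cE_0^c}(X)]
\]
are negligible compared to the main term of order $\deff/n$. Applying H\"older's inequality to these expectations and using the moment characterization of subgaussian norms from Appendix~\ref{sec:prob-tools}, for any integer $p\ge 1$ I expect bounds of the form
\[
\|\He^{-1/2}\nabla L_{\cE_0}(\theta_*)\|_2 \lesssim K_1\sqrt{p\,\deff}\,\delta^{1-1/(2p)}, \quad \|\He^{-1/2}(\nabla^2 L_{\cE_0}(\theta_*)-\He)\He^{-1/2}\|_\infty \lesssim K_2^2\,p\,d\,\delta^{1-1/(2p)}.
\]
Optimizing $p \asymp \log \deff$ in the first bound and $p \asymp \log d$ in the second converts the H\"older exponent $1-1/(2p)$ into precisely $1/q(t) = (1 + 1/\log t)^{-1}$, which reproduces the shape of condition~\eqref{eq:restricted-risk-condition}. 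Once both biases are of order at most $\deff/n$, combining the restricted-risk upper bound with the already established~\eqref{eq:crb-finite-complex} (which controls $\|\wh\theta_n - \theta_*\|_\He^2$) and dominating the linear bias term $\lang \nabla L_{\cE_0}(\theta_*), \wh\theta_n - \theta_*\rang$ by Cauchy--Schwarz yields~\eqref{eq:restricted-risk-bound-neat}.
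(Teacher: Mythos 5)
Your proposal follows the paper's own proof essentially step for step: steps $\boldsymbol{0^o}$--$\boldsymbol{1^o}$ carry over with the high-probability bound $\max_{i\in[n]}\|X_i\|_{\He_n^{-1}}^2 \lsim \rho K_0^2 d\log(en/\delta)$, and the restricted risk is handled by applying Proposition~\ref{prop:local}\ref{prop:local-bach} to $L_{\cE_0}$, then bounding the two truncation biases $\nabla L_{\cE_0}(\theta_*)$ and $\He_{\cE_0} - \He$ via H\"older's inequality with $p$-indices of order $\log\deff$ and $\log d$, exactly as in the paper's steps $\boldsymbol{2^o}$--$\boldsymbol{4^o}$. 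One small slip: the H\"older exponent you write on $\delta$ should be $1-1/p$ (i.e.\ $1/q$ with $1/p+1/q=1$), not $1-1/(2p)$, and note that the Hessian truncation bias does not need to be $O(\deff/n)$ but merely a constant-factor spectral perturbation of $\He$ so that Proposition~\ref{prop:dikin} and the quadratic-bracketing step remain valid; neither point changes the structure or conclusion of the argument.
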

The main message of Theorem~\ref{th:crb-fake-complex} is that, under minimal assumptions, the ``quadratic'' behavior of the population risk, as given by~\eqref{eq:score-finite-complex}--\eqref{eq:restricted-risk-bound-neat}, is guaranteed for sample sizes growing quadratically in parameter dimension -- more precisely, for~$n = \wt\Omega(\rho \cdot d \cdot \deff)$, cf.~the second bound in~\eqref{eq:n-fake-complex}, where~$\wt\Omega$ hides subgaussian constants and the logarithmic factor in~$\delta$. 
Technically, the curvature parameter~$\rho$ appears in~\eqref{eq:n-fake-complex} because of the ``incorrect'' power of the second derivative in Assumption~\ref{ass:gsc-fake} as compared to power~$3/2$ in~Assumption~\ref{ass:gsc-true}. 
Indeed, for canonically self-concordant losses, the factor~$\rho K_0^2$ in the bound for the critical sample size get replaced with~$K_2^2$, and Assumptions~\ref{ass:curvature} and~\ref{ass:design-subg} are not needed. 


\begin{theorem}
\label{th:crb-mine-complex}
Let Assumptions~\ref{ass:gsc-true},~\ref{ass:first-subg},~\ref{ass:second-subg} hold, and assume that~$\delta$ satisfies~\eqref{eq:restricted-risk-condition}. Then,~\eqref{eq:score-finite-complex}--\eqref{eq:restricted-risk-bound-neat} are satisfied, with~$L_{\cE_2}(\cdot)$ instead of~$L_{\cE_0}(\cdot)$, whenever 
\begin{equation}
\label{eq:n-mine-complex}
n \gtrsim \max\left\{K_2^4 \left(d + \log\left({1}/{\delta}\right)\right), \;K_1^2 K_2^2 \deff \, d  \log\left({ed}/{\delta}\right)\right\}.
\end{equation}
\end{theorem}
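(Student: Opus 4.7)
}
The plan is to mirror the four-step architecture of the proof of Theorem~\ref{th:crb-fake-complex}, replacing the ``raw'' design~$X$ with the calibrated design~$\csX$ throughout, and using Case~\ref{prop:local-mine} of Propositions~\ref{prop:line-gsc} and~\ref{prop:local} in place of Case~\ref{prop:local-bach}. The payoff is that the factor~$\rho$ disappears, because canonical self-concordance~\eqref{eq:line-gsc-true-one} lets us bound~$\phi_z'''$ in terms of~$\phi_z''$ itself (equivalently, in terms of~$\lang \csX,\theta-\theta_*\rang$) without routing through the compatibility inequality~$\Cov \prccq \rho \He$.

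First, the score and Hessian are controlled exactly as in the bounded-design case: under Assumption~\ref{ass:second-subg} with~$n \gtrsim K_2^4(d+\log(1/\delta))$, Theorem~\ref{th:covariance-subgaussian-simple} yields~$\tfrac{1}{2}\He \prccq \He_n \prccq 2\He$ with probability~$\ge 1-\delta$, and Assumption~\ref{ass:first-subg} combined with Theorem~\ref{th:subg-quad} gives~\eqref{eq:score-finite-complex}. Next, the localization step uses~\eqref{eq:curvature-extracted-n} to place~$L_n$ into Case~\ref{prop:local-mine} of Proposition~\ref{prop:local} at~$\theta_0 = \theta_*$ with~$W = \csX_{j(\theta)}$. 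To invoke Proposition~\ref{prop:dikin}, one needs~$\max_{i \in [n]}\|\csX_i\|_{\He_n^{-1}}^2 \cdot \|\nabla L_n(\theta_*)\|_{\He_n^{-1}}^2 \lesssim 1$. In the bounded case this was controlled by~$\B_2^2$; here I would replace~$\B_2^2$ by a subgaussian tail bound: by Assumption~\ref{ass:second-subg} and Corollary~\ref{cor:subg-sqrt} combined with a union bound over~$i \in [n]$, we get~$\max_{i \in [n]}\|\csX_i\|_{\He^{-1}}^2 \lesssim K_2^2 d \log(en/\delta)$, which after passing to~$\He_n$ via the Hessian concentration and combining with~\eqref{eq:score-finite-complex} gives the sample-size requirement~\eqref{eq:n-mine-complex} and the bound~\eqref{eq:crb-finite-complex}.

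For the restricted-risk inequality~\eqref{eq:restricted-risk-bound-neat} with~$L_{\cE_2}$ in place of~$L_{\cE_0}$, the raw risk~$L(\theta)$ cannot be plugged into Case~\ref{prop:local-mine} of Proposition~\ref{prop:local} because~$\sup_{z}\phi_z''(t)$ in~\eqref{eq:line-gsc-mine-ave} can be unbounded. The fix is to work with the truncated risk~$L_{\cE_2}(\theta) = \E[\ell_Z(\theta)\ind_{\cE_2}(\csX)]$, for which the role of~$W$ in Proposition~\ref{prop:local} can be taken to be the~$\csX$ whose norm is now capped at~$K_2\sqrt{d\log(e/\delta)}$ on~$\cE_2$. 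Applying~\eqref{eq:local-upper-aux} (via Case~\ref{prop:local-mine}$\Rightarrow$Case~\ref{prop:local-aux}) to~$L_{\cE_2}$ yields a quadratic upper bound of the form~$L_{\cE_2}(\wh\theta_n) - L_{\cE_2}(\theta_*) \lesssim \|\wh\theta_n - \theta_*\|_{\He_{\cE_2}}^2 + |\nabla L_{\cE_2}(\theta_*)^\T(\wh\theta_n - \theta_*)|$, where~$\He_{\cE_2} := \nabla^2 L_{\cE_2}(\theta_*)$. I would then use Assumptions~\ref{ass:first-subg},~\ref{ass:second-subg} together with subgaussian tail integrals on~$\cE_2^c$ to show~$\|\He - \He_{\cE_2}\|_{\op} \ll \|\He\|_{\op}$ and~$\|\nabla L_{\cE_2}(\theta_*)\|_{\He^{-1}} \lesssim \|\nabla L_n(\theta_*)\|_{\He^{-1}}$ under the admissibility condition~\eqref{eq:restricted-risk-condition} on~$\delta$; substituting~\eqref{eq:score-finite-complex} and~\eqref{eq:crb-finite-complex} then delivers~\eqref{eq:restricted-risk-bound-neat}.

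The main obstacle is the last step: controlling the bias introduced by truncation, i.e.\ showing that~$\nabla L_{\cE_2}(\theta_*)$ and~$\He_{\cE_2}$ are close to~$0$ and~$\He$ respectively. This requires careful tail integration of~$\|\nabla \ell_Z(\theta_*)\|_{\He^{-1}}$ and~$\|\csX\csX^\T\|_{\He^{-1}}$ on the event~$\cE_2^c$, using the moment characterization of~$\|\cdot\|_{\psi_2}$ in Lemma~\ref{lem:subg-properties}. The exponent~$q(t) = 1 + 1/\log(t)$ appearing in~\eqref{eq:restricted-risk-condition} arises precisely from balancing the subgaussian tail decay against the polynomial moment factors that appear when integrating~$\ind_{\cE_2^c}$ against the quadratic forms of interest. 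Since this is exactly the computation carried out in the proof of Theorem~\ref{th:crb-fake-complex} (with~$X$ replaced by~$\csX$ and~$\Cov$ by~$\He$, and with no~$\rho$ factor), the argument transfers almost verbatim.
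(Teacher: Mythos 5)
Your proposal is correct and follows essentially the same route the paper intends: the paper explicitly omits this proof, stating only that it ``closely follows that of Theorem~\ref{th:crb-fake-complex},'' and your adaptation (replacing $X$ by $\csX$, $\Cov$ by $\He$, Case~\ref{prop:local-bach} by Case~\ref{prop:local-mine} with the reduction to Case~\ref{prop:local-aux} as in Theorem~\ref{th:crb-mine-simple}, and dropping the compatibility inequality $\Cov\prccq\rho\He$) is exactly that omitted argument. One small point to keep in mind when you write it out: $\He^{-1/2}\csX$ need not be zero-mean, so before applying Theorem~\ref{th:subg-quad} or Corollary~\ref{cor:subg-sqrt} you should reproduce the centering decomposition from step~$\boldsymbol{1^o}$ of the proof of Theorem~\ref{th:crb-fake-complex} (as the paper itself flags in the proof of Theorem~\ref{th:crb-mine-improved}); this does not change the conclusion, only adds the cross-term and mean-term bookkeeping already done there.
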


We also note that both of the above results include a technical condition~\eqref{eq:restricted-risk-condition} that does not minimal violation probability~$\delta$.  
This condition is mild, as the admissible~$\delta$ depends polynomially on~$n$ and~$d$. 
Moreover, this condition can be dropped if one reinforces Assumption~\ref{ass:design-subg} (resp.,~\ref{ass:second-subg}) by assuming that~$\Cov^{-1/2} X$ (resp.,~$\He^{-1/2} \wt X$) is almost surely bounded. The corresponding modifications of Theorems~\ref{th:crb-fake-complex}--\ref{th:crb-mine-complex} are given in the {\em arXiv} version of this paper~\cite[Thms~3.1--3.2]{ostrovskii2018finite}.

As we previously discussed (cf.~Remark~\ref{rem:distrib-restrictive}), Assumptions~\ref{ass:design-subg}--\ref{ass:second-subg}, although local, are quite restrictive, as they assume light-tailed behavior.
Next we discuss how these assumptions can be relaxed.

\paragraph{Extension to heavy-tailed distributions.}
To extend the results, we might use the confidence-boosting technique based on a version of the multi-dimensional sample median as proposed in~\cite{hsu2016loss}. 
This allows to completely get rid of Assumption~\ref{ass:first-subg}, only assuming the existence of the covariance matrix~$\Gr(\theta_*)$. To use the technique, one first divides the sample into~$k = \log(e/\delta)$ non-overlapping subsamples, and computes the corresponding~$M$-estimators $\wh \theta^{(1)}, ..., \wh\theta^{(k)}$ over each subsample. Then, one aggregates them through~\cite[Algorithm~3]{hsu2016loss}, by using
\[
\text{dist}^{(i)}(\theta) := \|\theta-\wh\theta^{(i)}\|_{\wh\He^{(i)}}, \quad \wh\He^{(i)} := \nabla^2 L_n(\wh\theta^{(i)})
\] 
as the random distance oracle related to~$\wh\theta^{(i)}$. The final estimator is~$\wh\theta^{(\wh i)}$ with
\[
\wh{i} \in \Argmin_{i \in [k]} \left\{ \text{Median} \left[ \left(\text{dist}^{(j)}(\wh\theta^{(i)})\right)_{j \in [k]} \right] \right\}.
\]
By Chebyshev's inequality, each~$\wh\theta^{(i)}$ admits a fixed-probability version of~\eqref{eq:score-finite-complex}, say, with~$\delta = 2/3$. On the other hand, for each~$i \in [k]$, one has 
\[
\half \He \prccq \nabla^2 L_n(\wh\theta^{(i)}) \prccq 2\He
\]
with fixed probability.
Indeed,
$
\half L(\theta_*) \prccq L_n(\theta_*) \prccq 2 L(\theta_*)
$ 
by the analysis in Theorems~\ref{th:crb-fake-complex}--\ref{th:crb-mine-complex}.
Then, our integration argument (cf. the proof of~Lemmas~\ref{lem:bach-1d}--\ref{lem:mine-1d} in appendix) allows to relate~$L_n(\theta_*)$ to~$L_n(\wh\theta^{(i)})$ and results in
$
\half L_n(\theta_*) \prccq L_n(\wh\theta^{(i)}) \prccq 2 L_n(\theta_*).
$ 
Finally, the estimators over different subsamples are mutually independent. 
Thus, we can apply Theorem~11 of~\cite{hsu2016loss}, which finally yields~\eqref{eq:restricted-risk-bound-neat}. 

A similar technique also allows to somewhat weaken Assumptions~\ref{ass:design-subg} and~\ref{ass:second-subg}, replacing the subgaussian norm~$\|\cdot\|_{\psi_2}$ with the subexponential norm~$\|\cdot\|_{\psi_1}$ at the expense of an extra logarithmic factor. (By definition,~$X \in \R^d$ satisfies~$\|X\|_{\psi_1} \le K$ if for any~$u$ on the unit sphere one has~$(\E[|\lang X, u \rang|^p])^{1/p} \lsim K p$, compared to~$K\sqrt{p}$ in the case of~$\|\cdot\|_{\psi_2}$, cf. Lemma~\ref{lem:subg-properties} in Appendix.) 
This can be done by replacing Theorem~\ref{th:covariance-subgaussian-simple} (high-probability bound for subgaussian distributions) with \cite[Theorem~5.48]{vershynin2010introduction} (fixed-probability bound for subexponential distributions), controlling~$\E[\max_{i\in [n]} \|X_i\|_{\He}^2]$ and~$\E[\max_{i\in [n]} \|\wt X_i\|_{\He}^2]$ via Bernstein's inequality (Theorem~\ref{th:subg-quad} in Appendix).
However, this technique is limited to subexponential distributions of~$X$ and~$\wt X$ as required by~\cite[Theorem~5.48]{vershynin2010introduction}. 

On the other hand, replacing Assumptions~\ref{ass:design-subg} and~\ref{ass:second-subg} with finite-moment assumptions (ideally, finite kurtoses of vectors~$X$ and~$\wt X$) is challenging. 
First of all, sample covariance estimators~$\wh \Cov$ and~$\wh \He$ would have to be replaced by some estimators~$\bar \Cov$ and~$\bar \He$ that admit affine-invariant bounds of the form
\begin{equation}
\label{eq:psd-bounds}
\half \Cov \prccq \bar \Cov \prccq 2 \Cov, \quad 
\half \He \prccq \bar \He \prccq 2 \He
\end{equation}
with high probability, under the existence of only finite moments (ideally, the fourth moment) of~$X$ and~$\wt X$ in any direction. 
Such estimators were recently obtained in~\cite{heavy-covariance} based on the iterative appication of the truncated covariance estimator analyzed in~\cite{wei2017estimation}. 
Computing such an estimator on the hold-out sample would allow to get rid of Assumption~\ref{ass:design-subg} in Theorem~\ref{th:crb-fake-complex}. 
However, this technique by itself does not allow relax Assumption~\ref{ass:second-subg}, note first that we do not know the true minimizer~$\theta_*$, and hence cannot directly compute the robust estimator~$\bar \He$. 
A possible remedy, leading to the extension of Theorems~\ref{th:crb-fake-complex}--\ref{th:crb-mine-complex}, is to apply an approximation technique on top of the affine-invariant covariance estimator, similarly to the one used below to prove Theorems~\ref{th:crb-mine-improved}--\ref{th:crb-fake-improved} with improved critical sample size.
As we will discuss in the end of Section~\ref{sec:improved}, 
this would allow to get rid of Assumptions~\ref{ass:design-subg} and~\ref{ass:second-subg} in Theorems~\ref{th:crb-fake-complex}--\ref{th:crb-mine-complex} but not in Theorems~\ref{th:crb-mine-improved}--\ref{th:crb-fake-improved}.
\section{Improved results: near-linear critical sample size}
\label{sec:improved}

As we demonstrate next, the previously obtained bounds on the critical sample size can be improved: essentially, the product of $\deff$ and~$d$ can be replaced with their maximum. 
This requires to slightly strengthen Assumption~\ref{ass:second-subg} as follows.

\begin{customass}{D2$^*$}
\label{ass:second-subg++}
The calibrated design~$\csX(\theta) := [\ell''(Y,X^\T\theta)]^{{1}/{2}}X$ 
satisfies
\[
\| \He(\theta)^{-1/2} \csX(\theta)\|_{\psi_2} \le \bar{K}_2(r),
\]
where~$\He(\theta) = \E[\csX(\theta) \csX(\theta)^{\T}]$, for any~$\theta$ in the Dikin ellipsoid $\Theta_r(\theta_*)$ given by
\[
\Theta_r(\theta_*) := \{ \theta \in \R^d: \|\theta - \theta_*\|_{\He(\theta_*)} \le r\}.
\] 
\end{customass}
Note that Assumption~\ref{ass:second-subg} corresponds to Assumption~\ref{ass:second-subg++} with~$r = 0$, the correspondence being given by~$K_2 = \bar K_2(0)$. On the other hand, the strengthened assumption is still \textit{local}, i.e., it only concerns the points $r$-close to~$\theta_*$, in the local Hessian metric, rather than in the whole domain~$\Theta$.
With the new assumption at hand, we now state the improved result for canonically self-concordant losses.
\begin{theorem}
\label{th:crb-mine-improved}
Assume~\ref{ass:gsc-true},~\ref{ass:first-subg}, and~\ref{ass:second-subg++} with~$r \gsim 1$.
Then,~\eqref{eq:score-finite-complex},~\eqref{eq:crb-finite-complex} and 
\begin{equation}
\label{eq:excess-risk-bound-neat}
L(\wh\theta_n) - L(\theta_*) \lsim \frac{K_1^2\deff\log\left({e}/{\delta}\right)}{n}
\end{equation}
hold as long as
\begin{equation}
\label{eq:n-mine-improved}
n \gtrsim \max\left\{\bar K_2^4(r) d\log\left({ed}/{\delta}\right), \;K_1^2 \bar K_2^6(r) \deff  \log\left({e}/{\delta}\right)\right\}.
\end{equation}
\end{theorem}
Let us briefly explain the key ideas behind this result. 
First of all, recall that the extra factor~$d$ in the bound of Theorem~\ref{th:crb-mine-complex} appears because self-concordance of the \textit{individual losses} only allows to obtain a second-order approximation of the empirical risk in a small Dikin ellipsoid with radius~$O(1/\sqrt{d})$, due to the fact that~$\|\wt X\|_{\He^{-1}} = \Omega(\sqrt{d})$ with high probability. 
This second-order approximation then allows to localize the estimate as soon as~$\|\nabla L_n(\theta_*)\|_{\He^{-1}}$ becomes smaller than the radius of the ellipsoid in which such an approximation holds, cf. the proof of Proposition~\ref{prop:local}.
Hence, the extra factor~$d$ would be eliminated if we managed to provide a second-order Taylor approximation of~$L_n(\theta)$ in the constant-radius Dikin ellipsoid~$\Theta_{c}(\theta_*)$.
The immediately arising difficulty is that unlike the individual losses, the empirical risk is \textit{not} self-concordant, hence, the desired Taylor approximation cannot be obtained purely by integration.
Instead, we conduct a somewhat non-standard argument (see Figure~\ref{fig:covering}) which combines (i) self-concordance of the \textit{population risk} following from Assumption~\ref{ass:second-subg++}; (ii) self-concordance of the individual losses; (iii) a covering argument in which ellipsoid~$\Theta_c(\theta_*)$  is covered with small ellipsoids with radius~$O(1/d^\gamma)$ for some~$\gamma \ge 1/2$. 
In particular, we choose~$\gamma = 2$: this simplifies the calculations in the final step of the proof without breaking~\eqref{eq:n-mine-improved} since~$d^\gamma$ enters the analysis under logarithm, when bounding covering numbers.

\begin{figure}[t]
\centering
\includegraphics[width=0.8\textwidth]{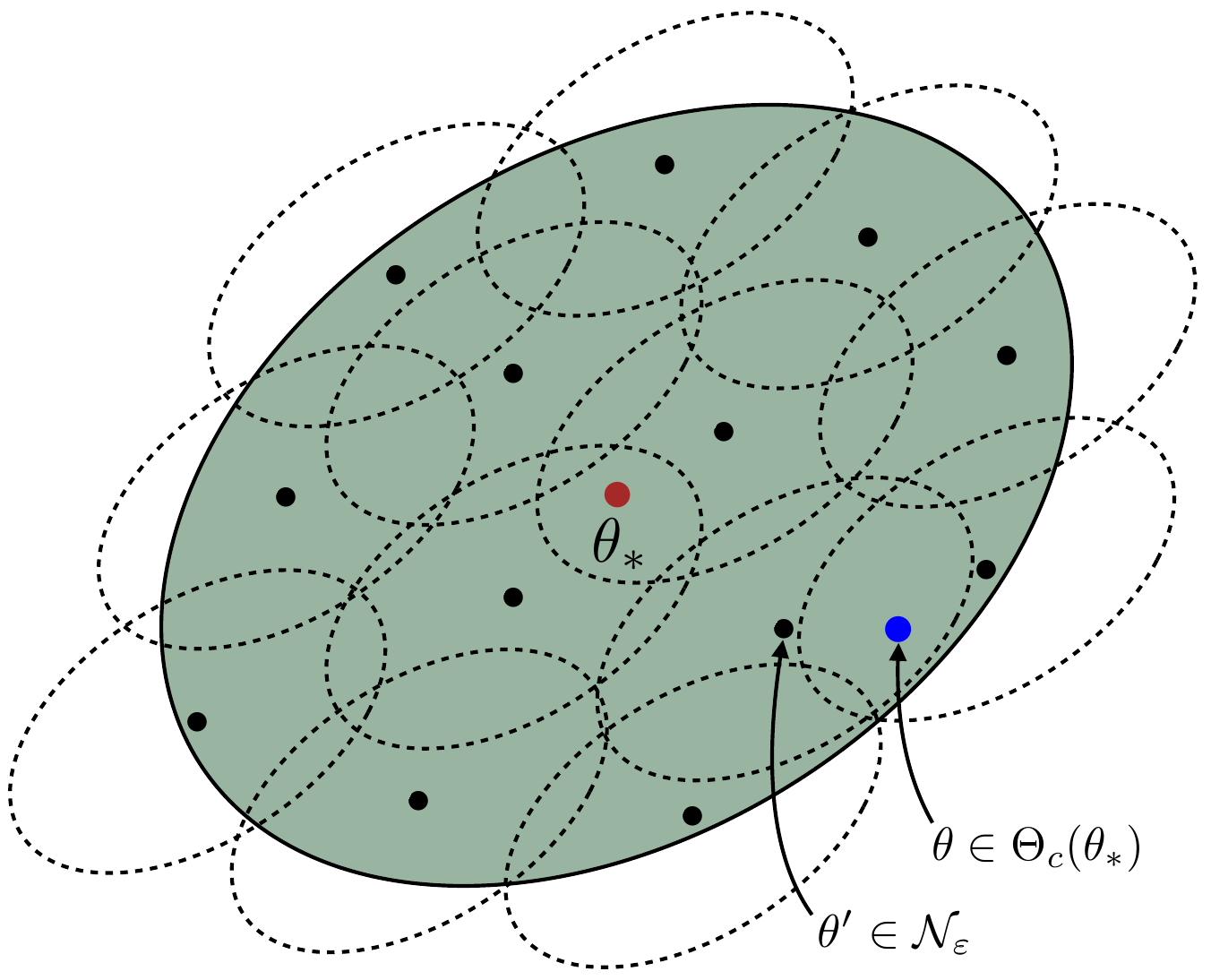}
\caption{
The crucial step in the proof of Theorem~\ref{th:crb-mine-improved} is to ensure that~$\half \He(\theta_*) \prccq \He_n(\theta) \prccq 2\He(\theta_*)$ holds with high probability uniformly over the constant-radius Dikin ellipsoid~$\Theta_c(\theta_*)$ (in green).
Using Assumption~\ref{ass:second-subg++}, we first prove that~$\half \He(\theta_*) \prccq \He(\theta)\prccq 2\He(\theta_*)$ for any~$\theta \in \Theta_c(\theta_*)$. On the other hand, self-concordance of individual losses provides a constant-order approximation of~$\He_n(\cdot)$ within a smaller ellipsoid with radius~$O(1/d^{\gamma})$, for some~$\gamma \ge 1/2$, around $\theta$.
As such, the problem is reduced to the control of the uniform deviations of~$\He_n(\theta)$ from~$\He(\theta)$ for~$\theta \in \cN_\veps$, where~$\cN_\veps$ is the epsilon-net of~$\Theta_1(\theta_*)$ with respect to the norm~$\|\cdot\|_{\He(\theta_*)}$ with~$\veps = O(1/d^{\gamma})$.  This is done by using Theorem~\ref{th:covariance-subgaussian-simple}.
}
\label{fig:covering}
\end{figure}

Next we present a counterpart of Theorem~\ref{th:crb-mine-improved} for pseudo self-concordant losses.
As one might expect, the bound on the critical sample size degrades by~$\rho$.
\begin{theorem}
\label{th:crb-fake-improved}
Assume~\ref{ass:gsc-fake},~\ref{ass:design-subg},~\ref{ass:first-subg},\ref{ass:curvature}, and~\ref{ass:second-subg++} with~$r \gsim 1/\sqrt{\rho}$. Then,~\eqref{eq:score-finite-complex},~\eqref{eq:crb-finite-complex} and~\eqref{eq:excess-risk-bound-neat} hold whenever
\begin{equation}
\label{eq:n-fake-improved}
n \gtrsim \max\left\{\bar K_2^4(r) d\log\left({ed}/{\delta}\right), \; \rho K_0^2 K_1^2 \bar K_2^4(r) \deff  \log\left({e}/{\delta}\right)\right\}.
\end{equation}
\end{theorem}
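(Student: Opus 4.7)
The plan is to follow the blueprint of Theorem~\ref{th:crb-mine-improved}, but with the constant-radius Dikin ellipsoid used in the canonically self-concordant case replaced by one of radius $r_0 \asymp 1/(\sqrt{\rho}\,K_0\,\bar K_2^2(r))$; the hypothesis $r \gsim 1/\sqrt{\rho}$ of the theorem then guarantees $\Theta_{r_0}(\theta_*) \subseteq \Theta_{r}(\theta_*)$, so Assumption~\ref{ass:second-subg++} applies throughout this ball. The score bound \eqref{eq:score-finite} is obtained as in step~$\boldsymbol{0^o}$ of Theorem~\ref{th:crb-fake-simple} from Assumption~\ref{ass:first-subg} and Theorem~\ref{th:subg-quad}, and requires no input from self-concordance. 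The new work is (i) establishing the uniform bracket $\half \He \prccq \He_n(\theta) \prccq 2\He$ on $\Theta_{r_0}(\theta_*)$, and (ii) converting it into a localization of $\wh\theta_n$ via the integrated Hessian identity.

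For (i) I first prove the average-Hessian analogue $\half \He \prccq \He(\theta) \prccq 2\He$ on $\Theta_{r_0}(\theta_*)$. Assumption~\ref{ass:gsc-fake} together with Proposition~\ref{prop:bach-second-derivative} applied to $t \mapsto \ell''(Y, X^\T(\theta_*+t(\theta-\theta_*)))$ yields the pointwise comparison
\[
\bigl|\ell''(Y, X^\T\theta) - \ell''(Y, X^\T\theta_*)\bigr| \le \bigl(e^{|X^\T(\theta-\theta_*)|}-1\bigr)\,\ell''(Y, X^\T\theta_*),
\]
and plugging this into $\lang v, (\He(\theta)-\He)v\rang$ with Cauchy--Schwarz gives
\[
\bigl|\lang v, (\He(\theta)-\He)v\rang\bigr|\le \sqrt{\E[(e^{|X^\T(\theta-\theta_*)|}-1)^2]}\cdot\sqrt{\E[(\csX^\T v)^4]}.
\]
Under Assumptions~\ref{ass:design-subg} and~\ref{ass:curvature}, $X^\T(\theta-\theta_*)$ is subgaussian with scale $\lesssim K_0\sqrt{\rho}\,\|\theta-\theta_*\|_\He$, so the first factor is $\lesssim K_0\sqrt{\rho}\,\|\theta-\theta_*\|_\He$ once $r_0 K_0\sqrt{\rho}$ is small enough; the second factor is $\lesssim \bar K_2^2(r)\|v\|_\He^2$ by subgaussianity of $\csX$ at $\theta_*$ (Assumption~\ref{ass:second-subg++}). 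This pins down the constant in $r_0$.

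To promote the bound to the empirical Hessian, I cover $\Theta_{r_0}(\theta_*)$ by an $\veps$-net $\cN_\veps$ in the $\|\cdot\|_\He$-metric with $\veps \asymp 1/(K_0\sqrt{\rho\,d^a})$ for some fixed $a \ge 1$, so that $\log |\cN_\veps| = O(d\log d)$. At each point $\theta' \in \cN_\veps$, Assumption~\ref{ass:second-subg++} combined with Theorem~\ref{th:covariance-subgaussian-simple} and a union bound over $\cN_\veps$ yields $\half \He(\theta') \prccq \He_n(\theta') \prccq 2\He(\theta')$; together with the previous step this gives $\He_n(\theta') \approx \He$ on the entire net, under the threshold $n \gsim \bar K_2^4(r)\, d \log(ed/\delta)$, matching the first term of \eqref{eq:n-fake-improved}. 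Interpolation to arbitrary $\theta \in \Theta_{r_0}(\theta_*)$ uses pseudo self-concordance of the \emph{individual} losses,
\[
e^{-\max_i|X_i^\T(\theta-\theta')|}\,\He_n(\theta')\prccq \He_n(\theta) \prccq e^{\max_i|X_i^\T(\theta-\theta')|}\,\He_n(\theta'),
\]
where the exponent is $O(1)$ because $\max_i \|X_i\|_{\Cov^{-1}} \lesssim K_0 \sqrt{d+\log(n/\delta)}$ with high probability under Assumption~\ref{ass:design-subg}, while $\|\theta-\theta'\|_\Cov \le \sqrt{\rho}\,\veps$ is tiny by construction.

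Once the second threshold of \eqref{eq:n-fake-improved}, $n \gsim \rho K_0^2 K_1^2 \bar K_2^4(r)\,\deff\log(e/\delta)$, is in force, the score bound forces $\|\nabla L_n(\theta_*)\|_{\He^{-1}} \le r_0 / c'$ for a suitable constant $c'$, which combined with the uniform bracket traps $\wh\theta_n$ inside $\Theta_{r_0}(\theta_*)$; this plays the role of Proposition~\ref{prop:dikin} without relying on self-concordance of $L_n$. The integral identity
\[
L_n(\theta)-L_n(\theta_*)-\nabla L_n(\theta_*)^\T(\theta-\theta_*) = \int_0^1 (1-t)\,(\theta-\theta_*)^\T \He_n\bigl(\theta_*+t(\theta-\theta_*)\bigr)(\theta-\theta_*)\,dt,
\]
combined with $\half \He \prccq \He_n(\cdot) \prccq 2\He$, then yields $\|\wh\theta_n-\theta_*\|_\He^2 \lesssim \|\nabla L_n(\theta_*)\|_{\He^{-1}}^2$, and the excess-risk comparison follows as in step~$\boldsymbol{2^o}$ of the proof of Theorem~\ref{th:crb-fake-simple}. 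The hardest step, in my view, is the average-Hessian comparison in the second paragraph: balancing the three subgaussian tails against each other and calibrating $r_0$ sharply enough so that both thresholds of \eqref{eq:n-fake-improved} degrade by only a constant factor is the delicate part; the covering and localization pieces are structurally the same as in the canonically self-concordant case of Theorem~\ref{th:crb-mine-improved}.
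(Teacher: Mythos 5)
Your proposal is correct and follows essentially the same four-stage blueprint as the paper's proof: population-Hessian comparison on a Dikin ellipsoid of radius $r_0 \asymp 1/(\sqrt{\rho}\,K_0\,\bar K_2^2(r))$, covering argument and union-bounded covariance estimation on a net, interpolation off the net via pointwise pseudo self-concordance of the individual losses, and then integrated-quadratic localization in place of Proposition~\ref{prop:dikin}. The one spot where you depart from the paper is the population Hessian comparison: the paper bounds the third derivative $\phi'''(t)$ of the averaged restriction directly, splits it by Cauchy--Schwarz into a $\wt X(\theta_t)$-term controlled by Assumption~\ref{ass:second-subg++} \emph{at the moving point} $\theta_t$ and an $X$-term controlled by Assumption~\ref{ass:design-subg}, and only then applies Proposition~\ref{prop:bach-second-derivative} to $\phi''$; you instead apply the same one-dimensional comparison lemma pointwise to $\ell''(Y,X^\T\theta_t)$, obtain $|\ell''(Y,X^\T\theta)-\ell''(Y,X^\T\theta_*)|\le (e^{|X^\T\Delta|}-1)\,\ell''(Y,X^\T\theta_*)$, and then decouple by Cauchy--Schwarz into $(e^{|X^\T\Delta|}-1)$ and the calibrated design $\csX$ \emph{at $\theta_*$ only}. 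Both yield the same radius $r_0$ and the same thresholds; your variant has the modest advantage that the population-Hessian step requires subgaussianity of the calibrated design only at $\theta_*$, though since the empirical step on the net still requires Assumption~\ref{ass:second-subg++} on the full ellipsoid, the overall hypotheses are unchanged. The one place you might want to be a touch more explicit is the moment bound $\E[(e^{|X^\T\Delta|}-1)^2]\lesssim \rho K_0^2\|\Delta\|_\He^2$, which holds under the subgaussian assumption only once $K_0\sqrt{\rho}\,r_0$ is below an absolute constant, but this is exactly what calibrating $r_0$ enforces, as you note.
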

The two results above merit some discussion.

First, note that, in the case of pseudo self-concordance, the radius of the Dikin ellipsoid in which Assumption~\ref{ass:second-subg++} is required to hold is~$\sqrt{\rho}$ times smaller than in the case of canonical self-concordance. 
As it will become clear from the proof of Theorem~\ref{th:crb-fake-improved}, this deflation is related to the fact that we cannot control the Hessians of~$L(\theta)$ over Dikin ellipsoids with a larger radius, even when Assumption~\ref{ass:second-subg++} holds on such an ellipsoid. 
On the other hand, decreasing the radius $r$ of the Dikin ellipsoid allows to control~$\bar K_2(r)$: in Appendix~\ref{sec:subgaussian-check} we show that, in logistic regression with Gaussian design~$X \sim \cN(0,\Cov)$,
\[
\bar K_2^2(r) \lsim \bar K_2^2(0) + r\sqrt{\rho}.
\] 
Thus Assumption~\ref{ass:second-subg++} with~$r = 1/\sqrt{\rho}$ is essentially \textit{equivalent} to Assumption~\ref{ass:second-subg}.

Second, note that the second threshold in~\eqref{eq:n-mine-improved} has an extra~$\bar K_2^4(r)$ factor compared to that in~\eqref{eq:n-mine-complex} if we do not distinguish between~$\bar K_2(r)$ and~$K_2 = \bar K_2(0)$, and similarly when comparing~\eqref{eq:n-fake-improved} and~\eqref{eq:n-fake-complex}. 
This can be a substantial difference since~$K_2$ and~$\bar K_2(r)$ can both depend on the norm of~$\theta_*$.
In fact, in Appendix~\ref{sec:subgaussian-check} (Proposition~\ref{prop:logistic-case-study}) we show, by a technical calculation, that in logistic regression with~$X \sim \cN(0,\Cov)$ one has
\[
\rho \lsim (1+\|\theta_*\|_\Cov)^{3},
\]
this bound being tight, while the bound on~$\bar K_2(1/\sqrt{\rho})$ is
\[
\bar K_2(1/\sqrt{\rho}) \lsim 
\sqrt{1+\|\theta_*\|_\Cov}
\]
up to a logarithmic factor.
Thus,~$\bar K_2^4(1/\sqrt{\rho})$ can potentially be as large as~$\rho^{2/3}$.
On the other hand, when the distribution of~$\wt X(\theta)$ is \textit{log-concave} and centrally symmetric at any~$\theta \in \Theta_r(\theta_*)$, the factor~$\bar K_2^4(r)$ can be eliminated. 
This amounts to using the improved relation between the third and second moments of the marginals of~$\He(\theta)^{-1/2} \wt X(\theta)$ in step~$\boldsymbol{1^o}$ of the analysis in Theorems~\ref{th:crb-mine-improved}--\ref{th:crb-fake-improved}:
\[
\E [|\lang \He(\theta)^{-1/2} \wt X(\theta),u\rang|^3| \le 7(\E[\lang u, \He(\theta)^{-1/2} \wt X(\theta),u\rang^2])^{3/2},
\]
as follows from~\cite[Lem.~2]{bubeck2014entropic} by simple algebra, using log-concavity of~$\He(\theta)^{-\frac{1}{2}} \wt X(\theta)$.


\paragraph{Extending Theorems~\ref{th:crb-mine-improved}--\ref{th:crb-fake-improved} to heavy-tailed distributions.}
One fact playing the key role in the proofs of the last two theorems is that in the bound
\begin{equation}
\label{eq:confidence-additive}
K^2(d + \log(e/\delta))
\end{equation}
for the sample complexity of estimating a single covariance matrix, the confidence term~$\log(1/\delta)$ is \emph{additive with~$d$.} This allows to take the union bound over an exponential in~$d$ number of events correponding to the centers of the epsilon net, while still preserving a near-linear in~$d$ sample complexity. 

As discussed in Section~\ref{sec:basic}, the main technical challenge when trying to extend our results to heavy-tailed distributions is posed by Assumption~\ref{ass:second-subg}, which for Theorems~\ref{th:crb-mine-improved}--\ref{th:crb-fake-improved} gets strengthened to~Assumption~\ref{ass:second-subg++}. To get rid of it, one could replace the empirical Hessians~$\wh\He(\theta)$ by some estimator~$\bar \He(\theta)$ that estimates~$\He(\theta)$ with high confidence in the positive-semidefinite sense (cf.~\eqref{eq:psd-bounds}) under weak moment assumptions. Given such estimators, we can essentially repeat the covering argument in the analysis of Theorems~\ref{th:crb-mine-improved}--\ref{th:crb-fake-improved}, replacing the Hessian estimate in any~$\theta \in \Theta_{r}(\theta_*)$ (with~$r = 1$ or~$r = 1/\sqrt{\rho}$) with the estimate~$\bar \He(\theta')$ in the closest center~$\theta'$ of the cover, and replacing empirical risk minimization with a version of stochastic quasi-Newton algorithm with~$\bar \He(\theta')$ as the Hessian oracle for~$\He(\theta)$. Unfortunately, the only known to us estimator that provably satisfies a high-confidence affine-invariant bound under weak moment assumptions is the one from~\cite{heavy-covariance}, and its sample complexity scales as
\[
K^2 d \log(e/\delta), 
\]
i.e., the confidence term enters {\em multiplicatively} with~$d$.
After taking the union bound over~$d^{O(d)}$ events, this bound becomes \textit{quadratic} in~$d$.
While this is sufficient to extend Theorems~\ref{th:crb-fake-complex}--\ref{th:crb-mine-complex}, the argument in Theorems~\ref{th:crb-mine-improved}--\ref{th:crb-fake-improved} is destroyed. 
Thus, extending the latter theorems, and obtaining near-linear sample complexity, has to rely on $\prccq$-type covariance estimation with additive confidence, cf.~\eqref{eq:confidence-additive}. The closest in this direction is the recent work~\cite{mendelson2018robust} which establishes a high-probability bound in the operator norm,
$
\|\wh \Cov - \Cov\| \le c\|\Cov\|,
$
holding with probability~$\ge 1-\delta$ when~$n \ge C({\kappa})  [\boldsymbol{r}(\Cov) + \log(1/\delta)]$, where~$C(\kappa)$ is a constant depending only on the kurtosis, and~$\boldsymbol{r}(\Cov) := \tr(\Cov)/\|\Cov\| \le d$ is the effective rank. Unfortunately, it is challenging to apply this result in our context, since the operator-norm bounds cannot be translated to~$\prccq$-type guarantees akin to~\eqref{eq:psd-bounds} when the estimator is not affine-equivariant. Some progress in this direction has recently been obtained in~\cite{heavy-covariance}; see~\cite[Sec.~2.3]{heavy-covariance} for a detailed discussion.


\section{High-dimensional setup}
\label{sec:sparsity}

Our next goal is to extend the results obtained so far to the high-dimensional setting. 
Namely, we assume that~$\Theta = \R^d$ with~$d \gg n$, and that the optimal parameter~$\theta_*$ is \textit{sparse}, i.e., the number of non-zero components of~$\theta_*$ is at most~$\s \ll d$. 
Note that if the support~$\cS$ of~$\theta_*$ was known, a reasonable estimator could be obtained by replacing~$X$ with its projection~$X_\cS$ on~$\cS$, and minimizing the empirical risk on~$\cS$.
As in the case of quadratic loss, and the classical Lasso estimator, this would lead to the improvement over the results of Section~\ref{sec:basic}--\ref{sec:improved}: the ambient dimension~$d$ would be replaced with~$\s$, and~$\deff$ with the quantity~$\tr(\He_\cS^{-1} \Gr^{\vphantom{-1}}_{\cS})$ where~$\Gr_\cS = \E[\ell'(Y, X_{\cS}^\T \theta_*)X_\cS^{\vphantom{\T}} X_\cS^\T]$ and~$\He_\cS = \E[\ell''(Y, X_{\cS}^\T \theta_*)X_{\cS}^{\vphantom{\T}} X_{\cS}^\T]$. 
However, in reality~$\cS$ is unknown, and the common recommendation is to use the~$\ell_1$-penalized~$M$-estimator, given by
\begin{equation}
\label{def:lasso}
\wh\theta_{\lambda,n} \in \Argmin_{\theta \in \R^d} L_n(\theta) + \lambda \|\theta\|_1.
\end{equation}
In the case of quadratic loss, it is well-known that the risk of the $\ell_1$-penalized estimator, when measured in terms of the~$\ell_1$-loss or the ``prediction'' loss corresponding to the design covariance matrix, is within a logarithmic in $d$ factor from the ``ideal'' risk of the projection oracle, provided that the penalization parameter~$\lambda$ is appropriately chosen, and the design is near-isotropic and subgaussian -- see, e.g.,~\cite{tibshirani},~\cite{candes2007dantzig},~\cite{bickel2009simultaneous},~\cite{juditsky2011verifiable}.
While the statistical theory for the quadratic loss is almost complete, this is not yet the case for general $M$-estimators. 
Here our goal is to partially close this gap, providing analogues of Theorems~\ref{th:crb-fake-complex} and~\ref{th:crb-mine-complex} in the high-dimensional setting. 
These results extend those obtained in~\cite{bach2010self-concordant} for the logistic loss using pseudo self-concordance, and are close to those proved in~\cite{van2012quasi}; we discuss the connections with these works in the end of this section.
Finally, notice that we do not prove analogues of Theorems~\ref{th:crb-mine-improved}--\ref{th:crb-fake-improved}, which would have resulted in a near-linear, rather than quadratic, dependency of the critical sample size from~$\s$.
We leave such extensions for future work.

We now introduce the final assumption complimentary to Assumption~\ref{ass:curvature}.
\begin{customass}{C$^*$}
\label{ass:lipshitz}
One has~$\Cov = \Id$. Moreover, for some~$\k_1,\k_2 > 0$ it holds
\[
\Gr \prccq \k_1 \Id, \quad \He \prccq \k_2 \Id.
\]
\end{customass}

Together, Assumptions~\ref{ass:curvature} and~\ref{ass:lipshitz} imply the bounds in operator norm:
\[
\|\Gr\|_{\infty} \le \k_1, \quad \|\He\|_{\infty} \le \k_2, \quad \|\He^{-1}\|_{\infty} \ge {1}/{\rho}.
\]
Moreover, we can reasonably expect that in the ill-specified case,~$\Gr \succq \He$, which is a stronger version of the natural inequality~$\deff \ge d$.
When this is the case, the eigenvalues of both~$\He$ and~$\Gr$ belong to the interval~$[\rho^{-1}, \overline\k]$ where~$\overline\k := \max(\k_1,\k_2)$. 
Then, the product
\[
Q := \rho\overline\k
\]
can be considered as the condition number of the estimation problem at hand.
In particular, we are about to see that the excess risk bounds, as well the bounds for the critical sample size, get inflated by~$Q$ in the high-dimensional regime.
This reflects the requirement that the problem should be well-conditioned with respect to the \textit{standard} coordinate basis, since both $\ell_0$-``norm'' and~$\ell_1$-norm depend on the choice of the basis. 
Some further remarks are given below.
\begin{itemize}
\item
Similarly to the bound~\eqref{eq:rho-bound}, we can always bound~$\k_1$ and~$\k_2$:
\[
\k_1 \le \sup_{(y,\eta) \in \cY \times \R} |\ell'(y,\eta)|, \quad \k_2 \le \sup_{(y,\eta) \in \cY \times \R} \ell''(y,\eta).
\]
Arguably, these bounds are more informative than the bound~\eqref{eq:rho-bound} for~$\rho$, as they involve the suprema of the loss derivatives (e.g., the right-hand sides are constants for pseudo-Huber and logistic losses).
\item
Correlated designs can also be considered, but this would lead to the inflation of the bounds by the condition number of~$\Cov$. 
This is natural, as~$\ell_1$-regularization fixes the basis, and the estimator is not affine-invariant.
\end{itemize}

The next result characterizes the statistical properties of the $\ell_1$-penalized $M$-estimator~\eqref{def:lasso} with a canonically self-concordant loss, extending Theorem~\ref{th:crb-fake-complex}.
\begin{theorem}
\label{th:crb-sparse-fake}
Assume~\ref{ass:gsc-fake},~\ref{ass:design-subg},~\ref{ass:first-subg},~\ref{ass:second-subg},~\ref{ass:curvature}, \ref{ass:lipshitz}, and~$|\theta_*|_{0} \le \s$. 
\begin{enumerate}
\item
Whenever
\begin{equation}
\label{eq:sparse-condtion-hessian}
n \gsim \max \left\{ \rho \k_2 K_2^4 \s \log\left({ed}/{\delta}\right), \; \rho^2 \k_1 K_0^2 K_1^2 \s^2 \log\left({edn}/{\delta}\right) \right\},
\end{equation}
and the regularization parameter satisfies
\begin{equation}
\label{eq:sparse-condition-lambda}
K_1\sqrt{\frac{\k_1 \log (ed/\delta)}{n}} \lsim \lambda \lsim \frac{1}{\rho K_0 \s \sqrt{\log(e d n/\delta)}},
\end{equation}
we have that with probability at least~$1-\delta$,
\begin{equation}
\label{eq:sparse-errors}
\begin{aligned}
\|\wh\theta_{\lambda,n} - \theta_*\|_1 \lsim \rho \s \lambda, \quad \|\wh\theta_{\lambda,n} - \theta_*\|_{\He}^2 \lsim \rho \s \lambda^2.
\end{aligned}
\end{equation}
\item
Define~$\cE := \{\|X\|_{\infty} \lsim K_0 \sqrt{\log \left({ed}/{\delta}\right)}\}$.
Then, $\Prob(\cE) \ge 1-\delta$, and whenever
\begin{equation*}
\delta \lsim \left(\frac{\lambda}{ K_1 \sqrt{\k_1\log(ed)}}\right)^{1 + \frac{1}{\log(d)}},
\end{equation*}
the restricted risk~$L_{\cE}(\theta) := \E[\ell_Z(\theta)\ind_\cE(X)]$ w.p.~at least~$1-\delta$ satisfies
\begin{equation}
\label{eq:sparse-excess-risk}
L_{\cE}(\wh\theta_{\lambda,n}) - L_{\cE}(\theta_*) \lsim \rho \s \lambda^2.
\end{equation}
\end{enumerate}
\end{theorem}

Clearly, the right choice of~$\lambda$ is the one attaining the lower bound in~\eqref{eq:sparse-condition-lambda}:
\[
\lambda \approx K_1\sqrt{\frac{\k_1 \log (ed/\delta)}{n}}
\]
This choice is always possible since the left-hand side in~\eqref{eq:sparse-condition-lambda} is upper-bounded with the right-hand side due to the second bound in~\eqref{eq:sparse-condtion-hessian}.
With such~$\lambda$, both the prediction error and the (restricted) excess risk~$L_{\cE}(\wh\theta_{\lambda,n}) - L_{\cE}(\theta_*)$ are at most
\[
O\left(\frac{Q \s \log (ed/\delta)}{n}\right)
\]
whenever~$n \gsim \max(Q\s,\rho Q\s^2)\log(ed/\delta),$ ignoring the dependence on the subgaussian constants.
Thus, in the case of pseudo self-concordant losses,~$d$ and~$\deff$ both get replaced with~$\s$, at the expense of extra~$O(Q\log d)$ factor in the bounds.

Next we state a version of Theorem~\ref{th:crb-sparse-fake} for canonically self-concordant losses.
\begin{theorem}
\label{th:crb-sparse-true}
Assume~\ref{ass:gsc-true},~\ref{ass:first-subg},~\ref{ass:second-subg},~\ref{ass:curvature}, \ref{ass:lipshitz}, and~$|\theta_*|_{0} \le \s$. 
\begin{enumerate}
\item
Whenever
\begin{equation}
\label{eq:sparse-condtion-hessian-true}
n \gsim \max \left\{ \rho \k_2 K_2^4 \s \log\left({ed}/{\delta}\right), \; \rho^2 \k_1 \k_2 K_1^2 K_2^2 \s^2 \log\left({edn}/{\delta}\right) \right\}
\end{equation}
and the regularization parameter satisfies
\begin{equation}
\label{eq:sparse-condition-lambda-true}
K_1\sqrt{\frac{\k_1 \log (ed/\delta)}{n}} \lsim \lambda \lsim \frac{1}{\rho K_2 \s \sqrt{\k_2 \log(e d n/\delta)}},
\end{equation}
we have that with probability at least~$1-\delta$,
\begin{equation}
\label{eq:sparse-errors-true}
\begin{aligned}
\|\wh\theta_{\lambda,n} - \theta_*\|_1 \lsim \rho \s \lambda, \quad \|\wh\theta_{\lambda,n} - \theta_*\|_{\He}^2 \lsim \rho \s \lambda^2.
\end{aligned}
\end{equation}
\item
The event~$\cE := \{\|\wt X\|_{\infty} \lsim K_2 \sqrt{\k_2 \log \left({ed}/{\delta}\right)}\}$ satisfies $\Prob(\cE) \ge 1-\delta$. 
Moreover, whenever
\begin{equation*}
\delta \lsim \left(\frac{\lambda}{ K_1 \sqrt{\k_1\log(ed)}}\right)^{1 + \frac{1}{\log(d)}},
\end{equation*}
the restricted risk~$L_{\cE}(\theta) := \E[\ell_Z(\theta)\ind_\cE(X)]$ w.p.~at least~$1-\delta$ satisfies
\begin{equation}
\label{eq:sparse-excess-risk-true}
L_{\cE}(\wh\theta_{\lambda,n}) - L_{\cE}(\theta_*) \lsim \rho \s \lambda^2.
\end{equation}
\end{enumerate}
\end{theorem}

\paragraph{Comparison of Theorems~\ref{th:crb-sparse-fake} and~\ref{th:crb-sparse-true}.}
The usual gain of~$\rho$ that we have observed so far for canonically viz.~pseudo self-concordant losses is not preserved in~$\ell_1$-regularized estimators. Instead, the second bound in~\eqref{eq:sparse-condtion-hessian} and the upper bound in~\eqref{eq:sparse-condition-lambda} get inflated with~$\k_2$, and the critical sample size, given the ``ideal'' choice of the regularization parameter corresponding to the lower bound in~\eqref{eq:sparse-condition-lambda-true}, becomes~$n \gsim \max(Q\s,Q^2\s^2)\log(ed/\delta).$
Essentially, the reason for that is that $\ell_1$-regularization does not ``know'' anything about the matrices~$\He$ and~$\He_n$, and, in a sense, violates the affine-invariant structure of the proofs for non-regularized~$M$-estimators. This seems to be a fundamental problem with~$\ell_1$-regularization, rather than the artifacts of our proofs, since~$\ell_1$-regularized $M$-estimators are \textit{themselves} not affine-invariant.
As such, we believe the additional factors of~$Q$ and~$Q^2$ to be unimprovable in the high-dimensional setup without further assumptions.

\paragraph{Comparison with prior work.}
Theorem~\ref{th:crb-sparse-fake} extends the result of~\cite[Theorem~5]{bach2010self-concordant} for logistic regression with fixed design, obtained using the pseudo self-concordance of the logistic loss.
While the established error bounds are similar, our results have important novelties. First, we analyze the random-design setting, whereas~\cite{bach2010self-concordant} assumes fixed design. 
Second, the result of~\cite{bach2010self-concordant} requires larger sample size, scaling with the product of~$\s$ and~$R^2$ where~$R$ is an upper bound on~$\|X\|_2$. Typically,~$R$ scales as~$\Omega(\sqrt{d})$ (e.g., this is the case where the design is pre-generated by sampling from a subgaussian distribution), thus~\cite{bach2010self-concordant} essentially proves the bound~$O(\s d)$ for the critical sample size.

On the other hand, our results can be compared to those in~\cite{van2012quasi} who establish the rate~$O(\lambda\s)$ for the~$\ell_1$-error and~$O(\lambda^2 \s)$ for the prediction error (see their Theorems~5.2 and~7.3), addressing a larger class of models including GLMs with non-canonical link functions, and general convex robust losses. However, in order to control the precision of the local quadratic approximations of the risk, the authors of~\cite{van2012quasi} assume that~$\ell''(Y,X^\T\theta_*)$ is bounded from below (Conditions~A4 and B), which can only be guaranteed by assuming that~$\theta_*$ is bounded in~$\ell_1$-norm. Thus, their results do not address the case of unbounded parameter.
Remarkably, these results similarly require the sample size to scale as~$\Omega(\s^2 \log d)$.

\begin{remark}
\label{rem:sketching}
In the proofs of Theorems~\ref{th:crb-sparse-fake}--\ref{th:crb-sparse-true}, matrices~$\He$ and~$\He_n$ only interact with residual~$\Delta$ which with high probability satisfies the restricted subspace condition~\eqref{eq:compatibility}.
Hence, we can strengthen the result, replacing Assumption~\ref{ass:curvature} and the inequality~$\He \prccq \k_2 \Id$ in Assumption~\ref{ass:lipshitz} with the requirement that
\[
 {\|\Delta\|_2^2}/\rho \le \|\Delta\|_{\He}^2 \le \k_2 \|\Delta\|_2^2
\]
in the case where~$\Delta \in \R^d$ is \emph{approximately sparse}, i.e., satisfies~$\|\Delta - [\Delta]_{\s}\|_1 \le 3\|[\Delta]_{\s}\|_1,$ where~$[\Delta]_{\s}$ is the projection of $\Delta$ to the span of its $\s$ largest coordinates.
This observation can be exploited to accelerate computation of the estimator~\eqref{def:lasso} when using proximal Newton-type methods (see~\cite{lee2014proximal}) via \emph{Hessian sketching}, i.e., by replacing the estimates~$\He_n(\theta)$ with the estimates~$\He_m(\theta) := \frac{1}{m} \sum_{j = 1}^m \wt X_j(\theta) \wt X_j(\theta)^\T$ computed from a small subsample.
\end{remark}

We defer further discussion of related work on~$\ell_1$-regularized~$M$-estimators to Section~\ref{sec:related-work}. 

\section{Numerical experiments}
\label{sec:numerical}
We now present two numerical experiments that illustrate our theoretical results.\footnote{All our codes are available online at \url{http://github.com/ostrodmit/self-concordant}.} 
\paragraph{Critical sample size grows linearly with model dimensionality.}
Here the point is to illustrate the results in Section~\ref{sec:improved}, namely Theorems~\ref{th:crb-mine-improved}--\ref{th:crb-fake-improved}. 
Recall that, in a nutshell, these results state that the fast~$O(d/n)$ rate for the excess risk becomes available starting from the critical sample size which is~$O(\deff \vee d)$, where~$O(\cdot)$ hides factors depending on the distribution-dependent constants~$K_0, K_1, \bar K_2$,~$\rho$ arising in~Assumptions~\ref{ass:design-subg},~\ref{ass:first-subg},~\ref{ass:second-subg++}, and~\ref{ass:curvature}.
In our first experiment (see Fig.~\ref{fig:exp-gauss-logistic}), we empirically demonstrate that the critical sample size indeed scales linearly with the parameter dimension. 
For growing sample size~$n = 10^k$,~$k \in [1,3]$, we generate an i.i.d.~sample~$(X_i, Y_i)_{i=1}^n$ with standard Gaussian design~$X_i \sim \cN(0,\Id_d)$ and conditional distribution of the (binary) label given by~$\Prob[Y_i = 1] = 1/(1+\exp(-X_i^\top\theta_*))$ (i.e., such that the logistic model is well-specified) or by~$\Prob[Y_i = 1] = 1-\phi(X^\top \theta^*)$, where~$\phi(\cdot)$ is the standard Gaussian c.d.f., which corresponds to the probit regression.
Thus, the logistic model for~$Y | X$ is well-specified in the second case
We take~$\theta_* = \ind_d/\sqrt{d}$ (thus~$\| \theta_* \|_2 = 1$) and consider the following three quantities for~$d \in \{8,16,32,64\}$:
\begin{enumerate}
\item Excess risk~$L(\wh \theta_n) - L(\theta_*)$ of the logistic regression estimator, i.e., for the~$M$-estimator with the logistic loss~$\ell(y,\eta) =  \log(1+e^{\eta}) - y\eta$. 
\item Excess risk~$L^{\SC}(\wh \theta_n^{\SC}) - L^{\SC}(\theta_*^{\SC})$ for the~$M$-estimator with loss~\eqref{def:our-class-loss} -- canonically self-concordant analogue of the logistic loss proposed in Sec.~\ref{sec:ass-qsc}. 
Here~$L^{\SC}(\theta) := \E[\ell^{\SC}(Y,X^\top \theta)]$ with~$\ell^{\SC}(y,\eta)$ given by~\eqref{def:our-class-loss};~$\theta_*^{\SC}$ minimizes~$L^{\SC}(\theta)$ and might be different from~$\theta_*$. Note that~$\ell^{\SC}(y,\cdot)$ and~$\ell(y,\cdot)$ have the same second-order Taylor expansion around~$\eta = 0$ (see~Fig.\ref{fig:robust}).
\item Excess risk~$L(\wh \theta_n^{\SC}) - L(\theta_*)$ that evaluates~$\wh \theta_n^{\SC}$ as a surrogate estimator.
\end{enumerate}
In all three cases, we approximate the excess risk via a test sample with~$N = 10^4$ observations, and we compute~$\theta_*^\SC$ by running~$\textsf{fmincon}$ optimization routine in Matlab (we use the constraint~$\|\theta\|_2 \le 2$ to avoid numerical instabilities). 
Then, for each value of~$d$ and the three notion of excess risk, we plot the excess risk against the sample size in the~$\log_{10}-\log_{10}$ scale.
The experiment is repeated~$T = 800$ times, and the averaged curve is then plotted along with a~$3\sigma$-confidence interval. 

The results are shown in Fig.~\ref{fig:exp-gauss-logistic}. We can distinctively see the elbow effect: the initial slow convergence rate (slope around~$-1/2$ on the log-log scale) changes to the fast rate (slope~$-1$) for larger sample size. 
This is observed for all three curves, all values of~$d$, and both conditional distributions of~$Y$.
\begin{itemize}
\item
For the logistic distribution,~$\wh \theta_n$ outperforms~$\wh\theta_n^{\SC}$ in the fast rate zone (i.e., with sample sizes above the critical level) in terms of their corresponding ``native'' risks as well as the logistic risk. 
This is expected: while~$\wh\theta_n$ is well-specified, estimator~$\wh\theta_n^{\SC}$ has to pay for model misspecification, and its excess risk depends on~$\deff$ rather than~$d$ (cf.~\eqref{eq:excess-risk-bound-neat} in Theorem~\eqref{th:crb-mine-improved}). 
Meanwhile, for smaller sample sizes~$L^{\SC}(\wh \theta_n^{\SC}) - L^{\SC}(\theta_*^{\SC})$ is smaller than the other two excess risks. 
This seems to be simply due to~$\ell^{\SC}(y,\eta)$ being smaller than~$\ell(y,\eta)$ away from~$\eta = 0$ (cf.~Fig.~\ref{fig:robust}).
\item
In the case of probit distribution, both estimators are misspecified, and turn out to have very close performance in terms of all three excess risks. 
\end{itemize}
Finally, and most importantly, we see that the ``elbow'' on the curves moves to the right {\em in (roughly) constant increments} as we increase~$d$ geometrically. 
This is what we expect: according to Theorems~\ref{th:crb-fake-improved}--\ref{th:crb-mine-improved}, the critical sample size grows linearly with~$d$ or~$\deff$ in the misspecified case 
(and here~$\deff$ is itself linear in~$d$).


\paragraph{Critical sample size growing as~$e^{RD}$ for ``bad'' design distributions.}
Here we empirically investigate the dependency of constants~$K_0, K_1, \bar K_2, \rho$ from the norm~$D = \|\theta_*\|_{\Cov}$ of the population risk minimizer.
Recall that in Appendix~\ref{sec:subgaussian-check} we provide polynomial bounds in the case of logistic regression with Gaussian design. 
However, for certain (quite artificial) distributions of design the dependency might be exponential as implied by the results of~\cite{hazan2014logistic}. 
In this experiment, we consider the adversarial distribution proposed in~\cite[Sec.~3.2]{hazan2014logistic}, in which~$X \in \R^{2}$ is supported on three points with carefully chosen probabilities (see~\cite[Figs.~3-4]{hazan2014logistic}) and~$Y \equiv 1$. 
The authors prove the~$\Omega(1/\sqrt{n})$ lower bound (and hence the absence of fast rate) for the excess risk long as~$n \lsim e^{RD}$, where~$R = \|X\|_{\Cov^{-1}}$. 
We empirically discover a similar phenomenon for the self-concordant loss~\eqref{def:our-class-loss}. 
To this end, we follow a similar protocol as in the previous experiment but generate the pairs~$(X_i, Y_i)$ according to the distribution in~\cite{hazan2014logistic} and linearly increase~$D$ while fixing~$d = 2$. 
The experiment is repeated~$T = 1600$ times for sample sizes~$n \in [10^1, 10^4]$, and the population risk is approximated via a test sample with size~$N = 5 \cdot 10^4$. 
We then plot the same three dependencies as in the previous experiment (again in the log-log scale) for~$D \in \{1,3,5,7\}$. 

The results are presented in Fig.~\ref{fig:exp-hazan-always-1}. 
For small sample sizes the curves oscillate, which seems to be due to the special low-dimensional structure of the design distribution. 
However, the upper envelope of the curve clearly exhibits the same ``elbow'' effect as before: the slope changes from roughly~$-1/2$ to~$-1$ for large sample sizes. 
Moreover, the horizontal location of the elbow moves in nearly uniform increments as we change~$D$ linearly, precisely as expected from the theory in~\cite{hazan2014logistic}. 
We also note that the ``transfer'' risk~$L(\wh \theta_n^{\SC}) - L(\theta_*)$ converges to a non-zero value, which shows that~$\theta_*^{\SC} \ne \theta_*$ for the distribution considered here.



\begin{figure}

\begin{minipage}{0.48\textwidth}
\centering
\quad\quad\quad Logistic distribution of~$Y|\eta$\\
\quad\quad\quad$d = 8$
\includegraphics[width=0.8\textwidth]{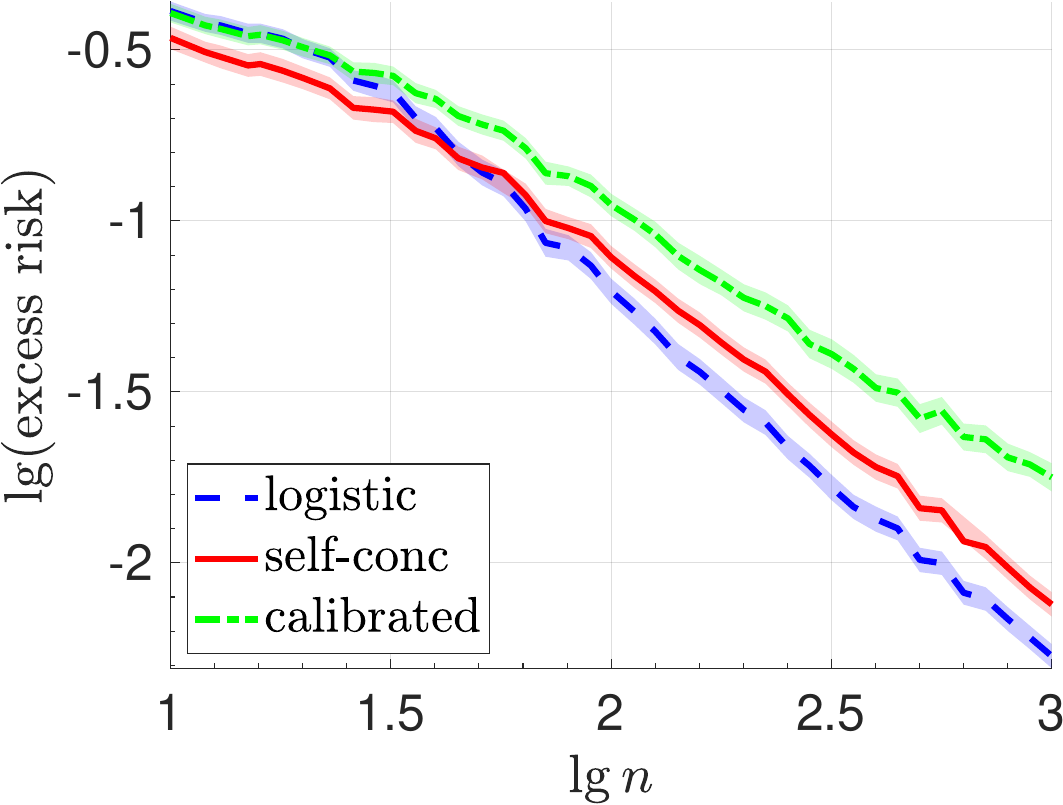}
\vspace{0.2cm}
\centering
\quad\quad\quad$d = 16$
\includegraphics[width=0.8\textwidth]{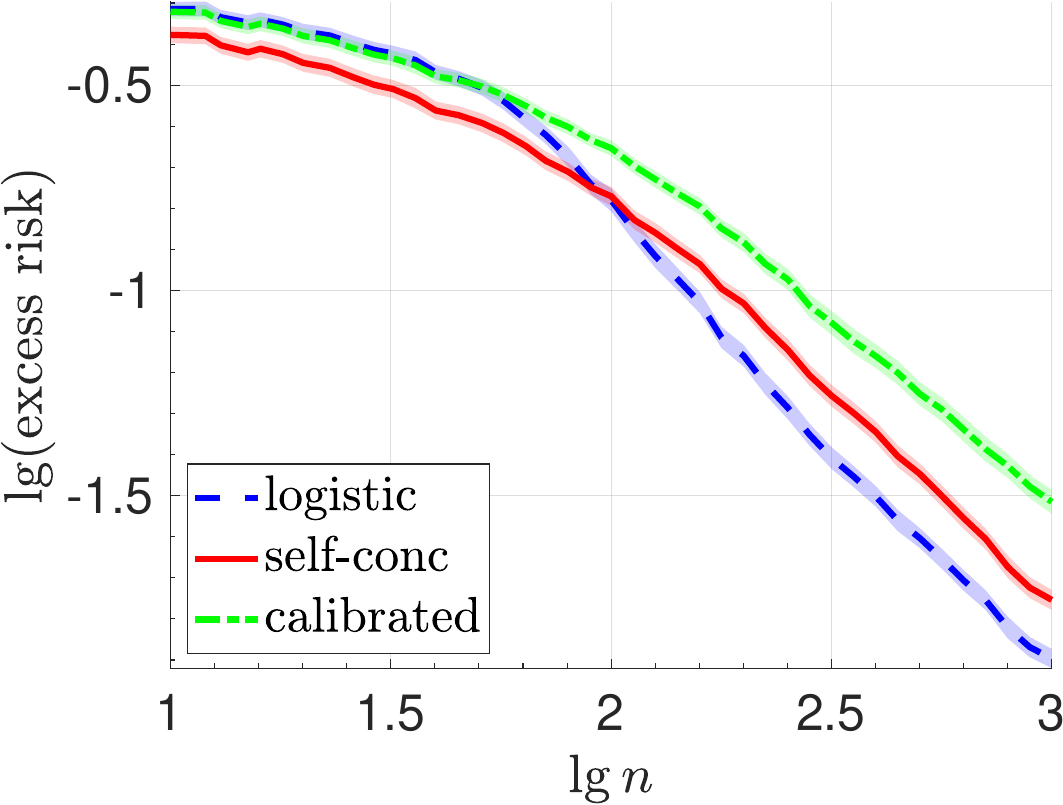}
\vspace{0.2cm}
\centering
\quad\quad\quad$d = 32$
\includegraphics[width=0.8\textwidth]{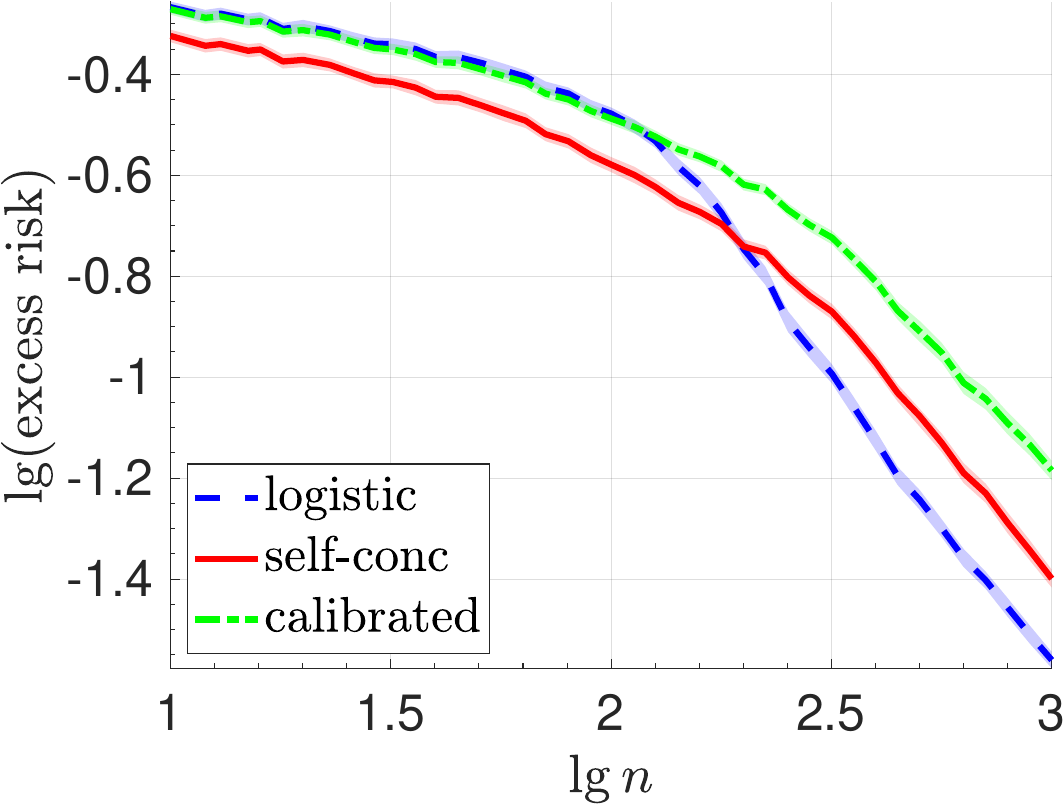}
\vspace{0.2cm}
\centering
\quad\quad\quad$d = 64$
\includegraphics[width=0.8\textwidth]{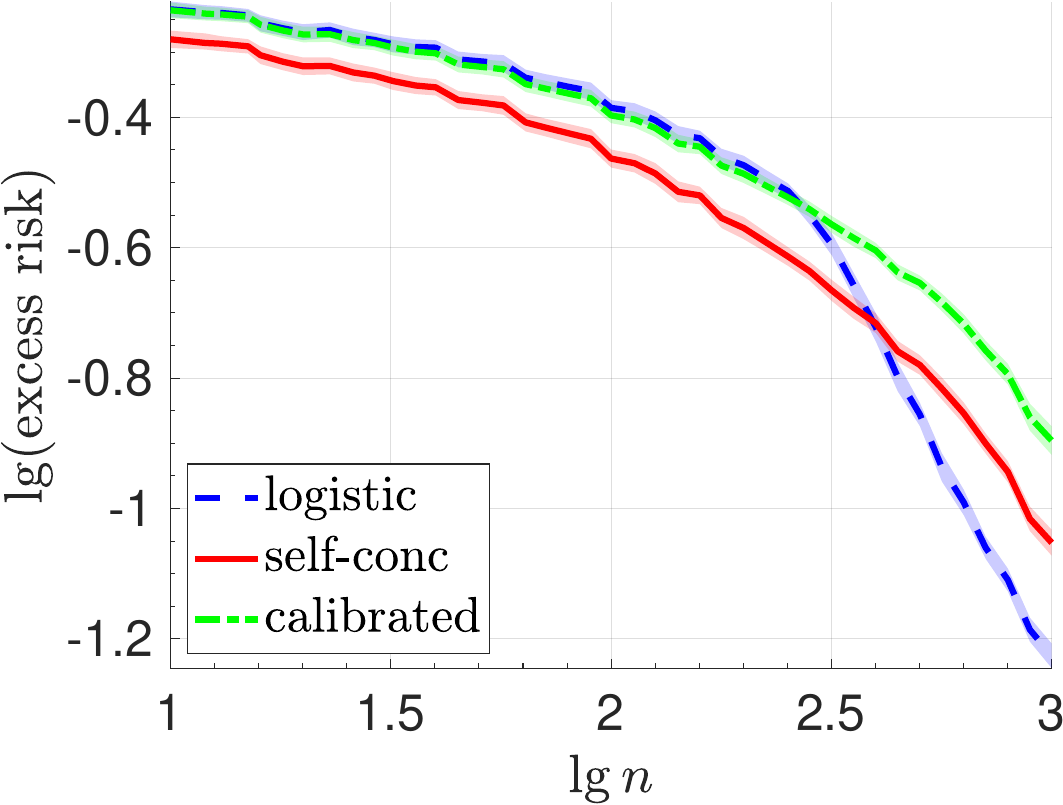}
\end{minipage}
\begin{minipage}{0.48\textwidth}
\centering
\quad\quad\quad Probit distribution of~$Y|\eta$\\
\quad\quad\quad$d = 8$
\includegraphics[width=0.8\textwidth]{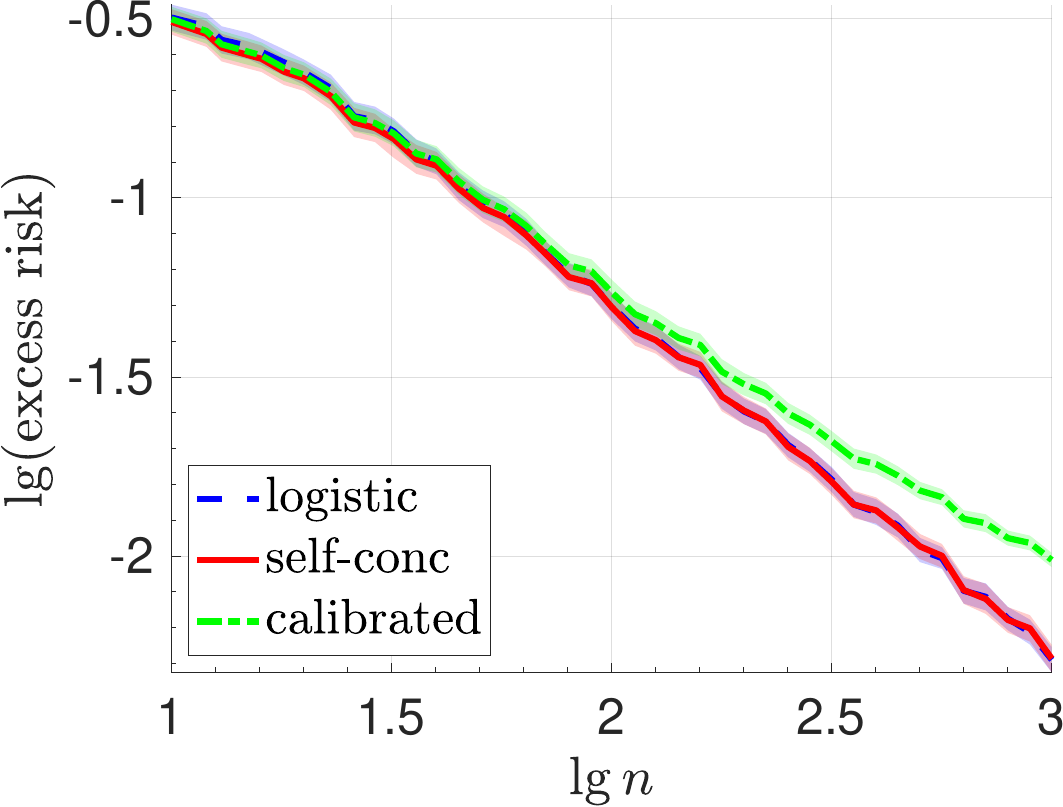}
\vspace{0.2cm}
\centering
\quad\quad\quad$d = 16$
\includegraphics[width=0.8\textwidth]{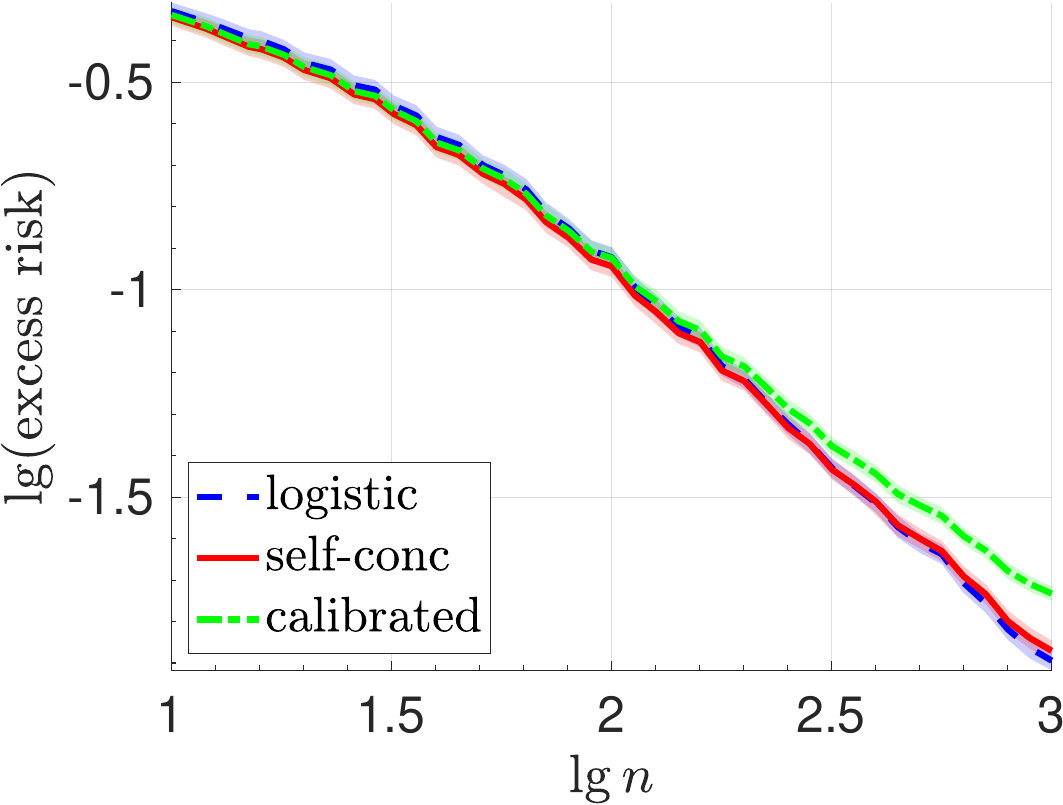}
\vspace{0.2cm}
\centering
\quad\quad\quad$d = 32$
\includegraphics[width=0.8\textwidth]{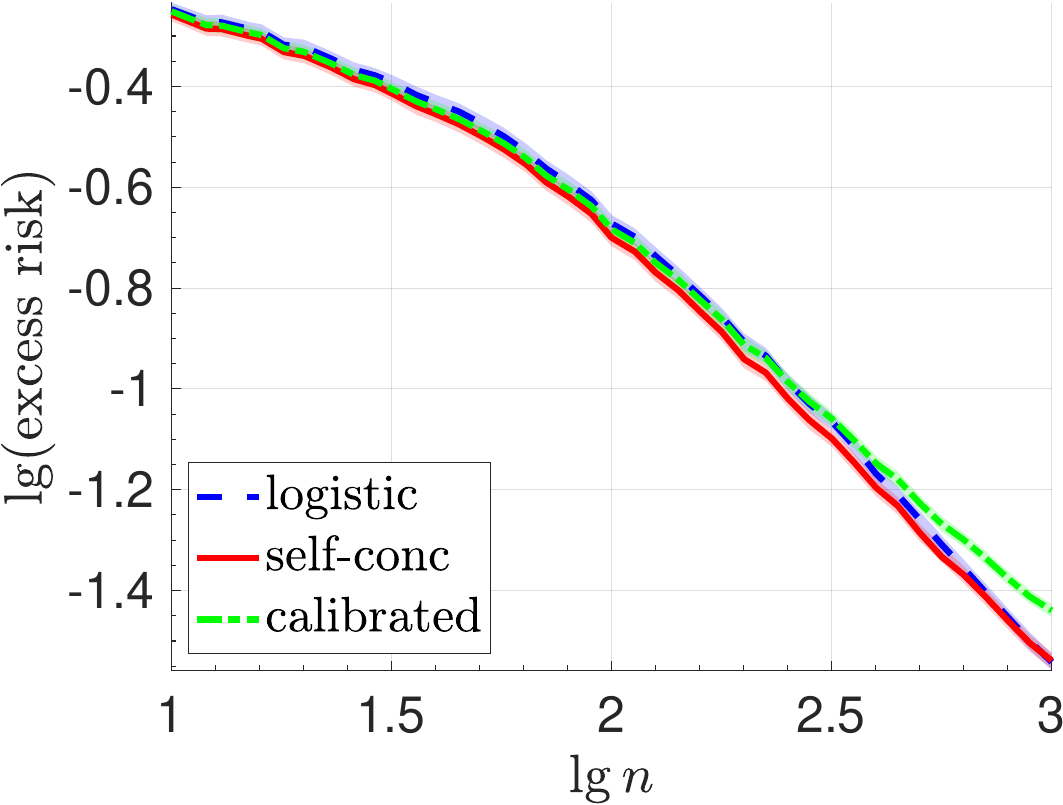}
\vspace{0.2cm}
\centering
\quad\quad\quad$d = 64$
\includegraphics[width=0.8\textwidth]{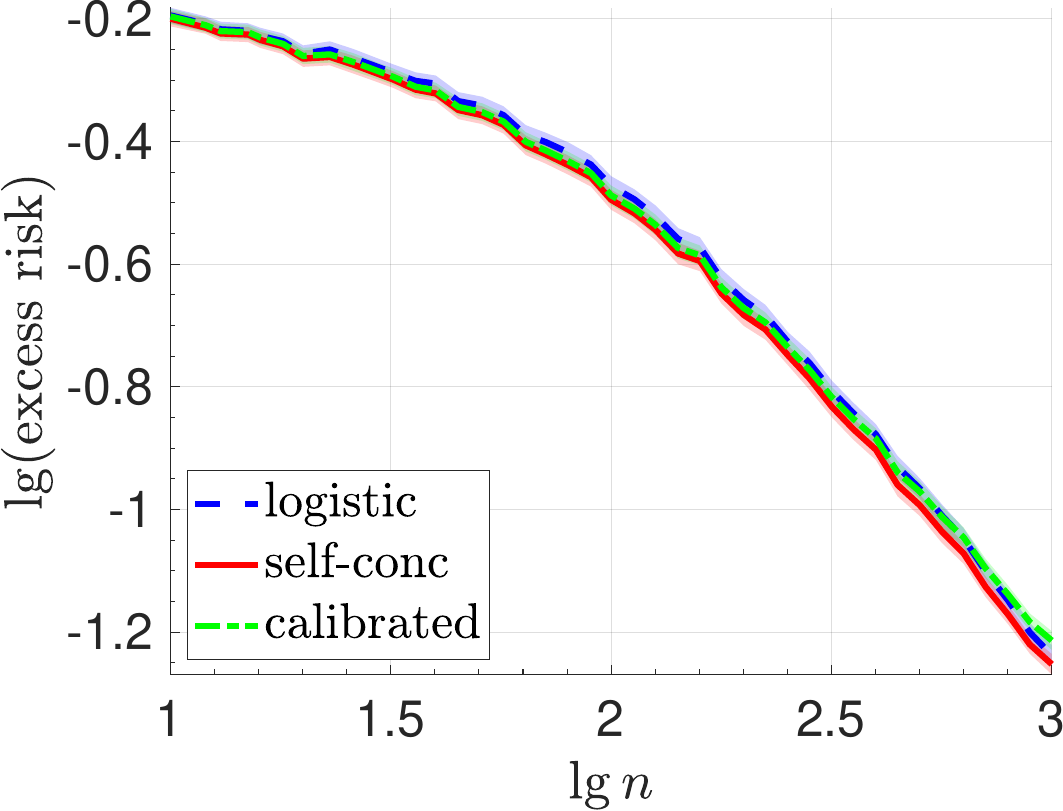}
\end{minipage}

\caption{
The comparison of two~$M$-estimators: with the logistic loss~(estimator~$\wh\theta_n$) and its canonically self-concordant analogue~\eqref{def:our-class-loss} (estimator~$\wh\theta_n^{\SC}$) in the first experiment. {\em``Logistic''},~{\em``self-conc''} and~{\em``calibrated''} correspond to the three notions of excess risk: the ``native'' risks for~$\wh \theta_n,\wh\theta_n^{\SC}$ and the ``transfer'' risk for~$\wh\theta_n^{\SC}$ with logistic loss (see p.~22). 
}
\label{fig:exp-gauss-logistic}
\end{figure}

\begin{figure}[t]
\begin{center}
\begin{minipage}{0.48\textwidth}
\centering
\quad\quad\quad$D = 1$
\includegraphics[width=0.8\textwidth]{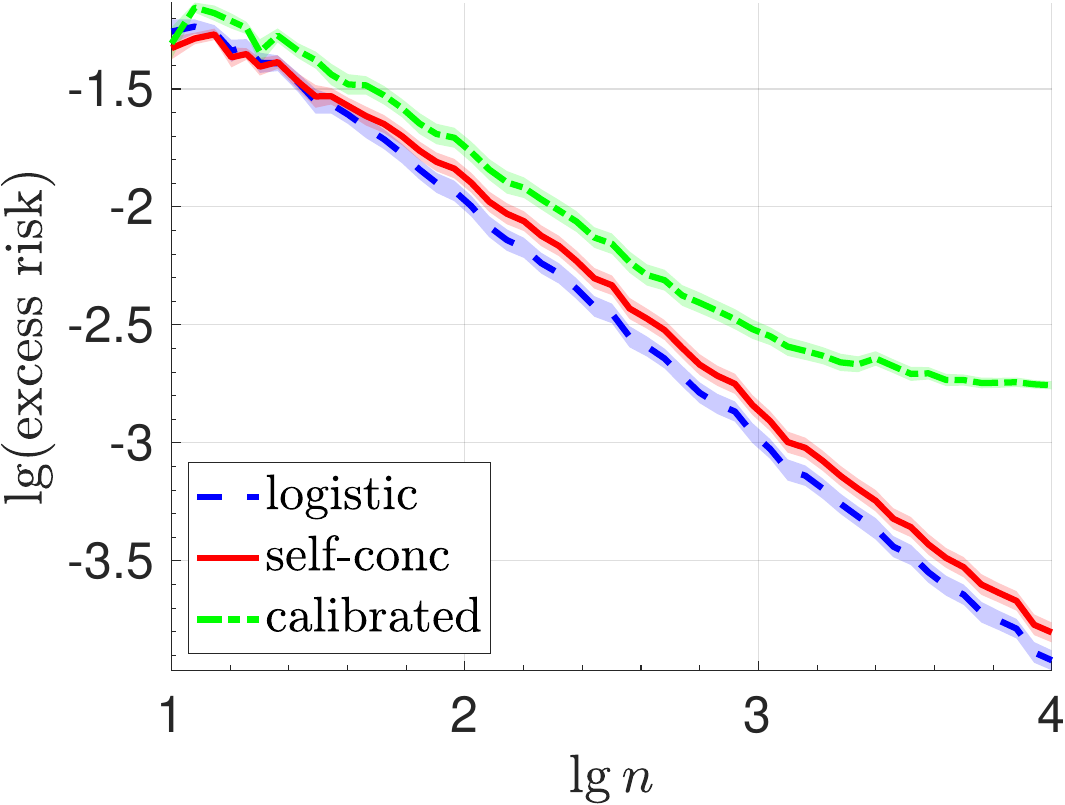}
\end{minipage}
\begin{minipage}{0.48\textwidth}
\centering
\quad\quad\quad$D = 3$
\includegraphics[width=0.8\textwidth]{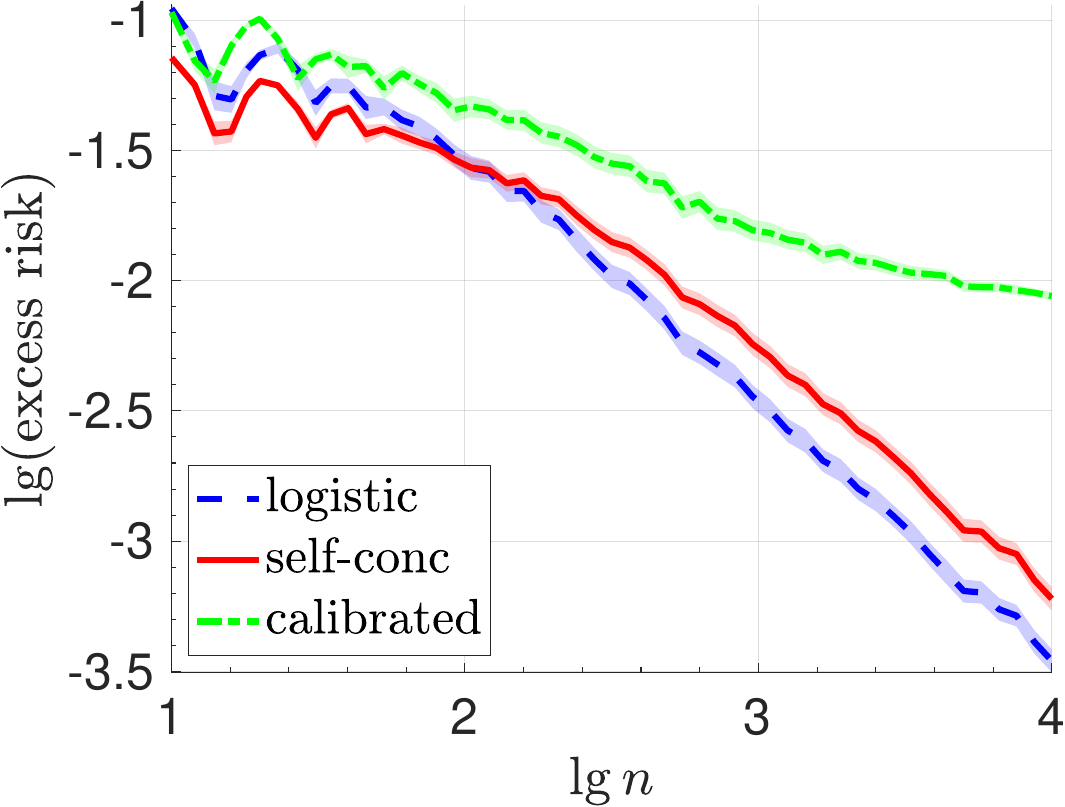}
\end{minipage}\\
\vspace{0.2cm}
\begin{minipage}{0.48\textwidth}
\centering
\quad\quad\quad$D = 5$
\includegraphics[width=0.8\textwidth]{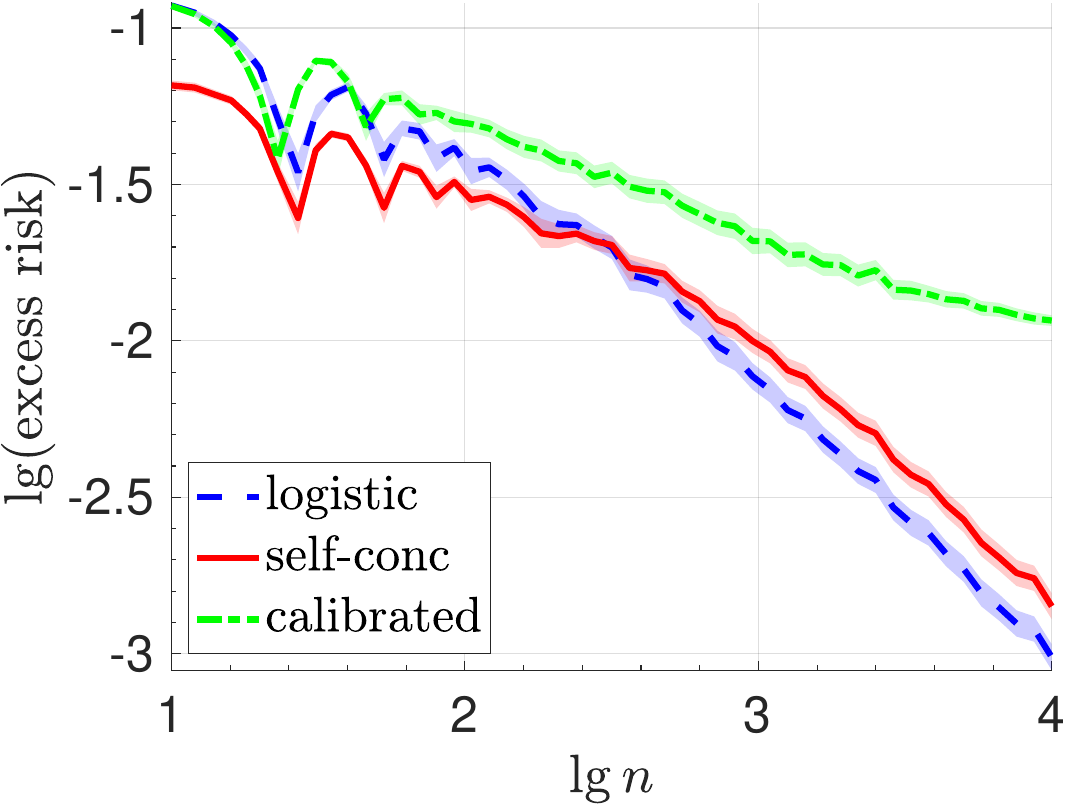}
\end{minipage}
\begin{minipage}{0.48\textwidth}
\centering
\quad\quad\quad$D = 7$
\includegraphics[width=0.8\textwidth]{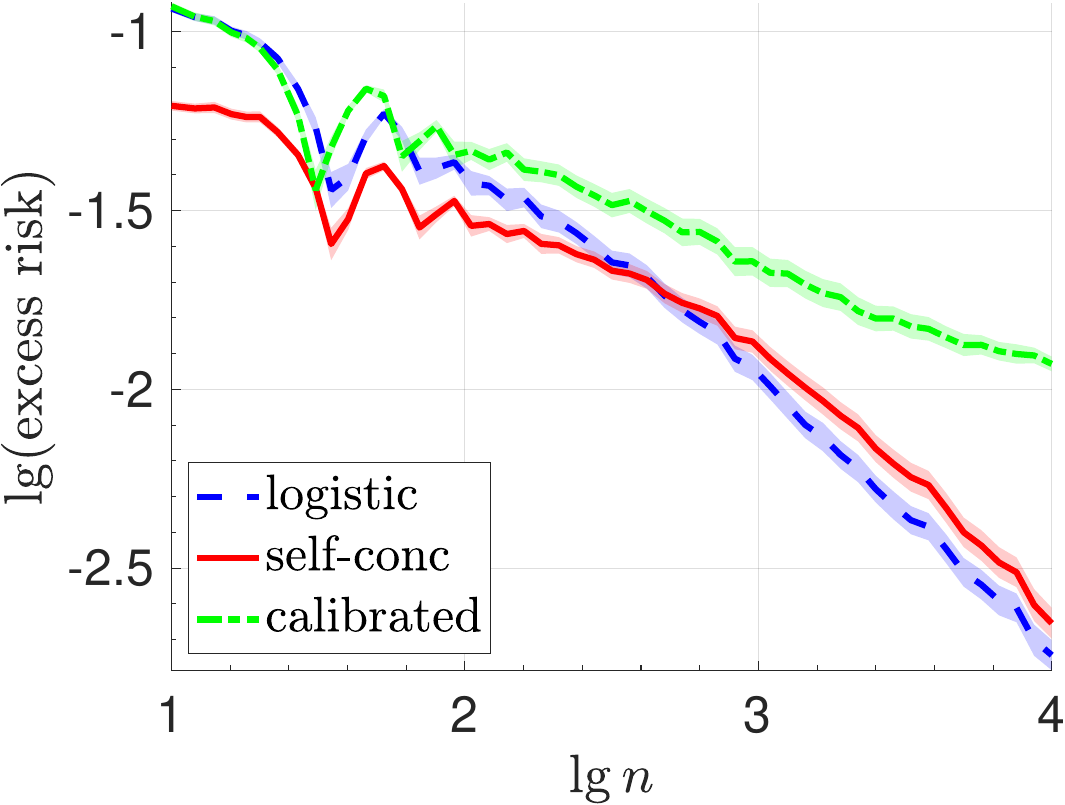}
\end{minipage}
\caption{
The comparison of~$M$-estimators~$\wh\theta_n,\wh\theta_n^{\SC}$ in the second experiment, using the adversarial data distribution from~\cite{hazan2014logistic} (see p.~22 for more details).
{\em``Logistic''},~{\em``self-conc''} and~{\em``calibrated''} correspond to the same three notions of excess risk as in the first experiment (see Fig.~\ref{fig:exp-gauss-logistic}).
}
\label{fig:exp-hazan-always-1}
\end{center}
\end{figure}

\section{Related work}
\label{sec:related-work}

\paragraph{Self-concordant analysis of logistic regression.}
Our approach is inspired by~\cite{bach2010self-concordant}, and we reuse and extend some of their technical results in our Propositions~\ref{prop:local}--\ref{prop:dikin}.
However, our results and analysis are crucially different from those in~\cite{bach2010self-concordant} in several ways.
First, we address the random-design setting, whereas in~\cite{bach2010self-concordant} the design is fixed. 
Second,~\cite{bach2010self-concordant} considers only pseudo self-concordant losses, focusing on logistic regression, whereas we also provide results for canonically self-concordant losses, and, crucially, compare the two cases.
Third, we obtain similar results for ill-specified models, whereas~\cite{bach2010self-concordant} only establishes a slow rate in this case.
Finally, and most importantly, while we use very similar tools to those in~\cite{bach2010self-concordant}, the ``core'' of our analysis is more direct.
Namely,~\cite{bach2010self-concordant} studies the minimizer~$\wh\theta_{\lambda,n}$ of \textit{the~$\ell_2$-penalized empirical risk} with strictly positive regularization parameter~$\lambda$, and moreover, imposes some technical condition on the minimal magnitude of $\lambda$, see their Eq.~(13).
Upon close inspection, this condition implies
\begin{equation}
\label{eq:degrees-of-freedom}
n \gsim \rho \cdot \df_{\lambda}^2, \quad \df_{\lambda} := \tr[\He (\He + \lambda \Id)^{-1}],
\end{equation}
where the \emph{degrees of freedom} parameter~$\df_{\lambda}$ replaces~$d$ in the $\ell_2$-penalized setting.
This, in turn, allows to carry out an argument analogous to ours, but applying Proposition~\ref{prop:dikin} to the \textit{regularized} empirical risk. 
However, $\ell_2$-penalization makes the analysis much more involved, as it rests on the comparison of the regularized risks, and accordingly, relates~$\theta_*$ and~$\wh \theta_{\lambda,n}$ through the intermediate point -- the minimizer~$\theta_\lambda$ of the regularized average risk.
The extra condition in~\cite{bach2010self-concordant}, which makes this analysis possible, is non-trivial, and requires some fine balance between the regularization parameter, sample size, and various types of degrees of freedom and biases. 
We manage to circumvent these difficulties for the plain ERM, including the ill-specified case, by realizing that the only condition needed to carry out the argument based on self-concordance, in the non-regularized case, is the sufficient sample size. 

\paragraph{Self-concordant analysis and improper algorithms.}
Another relevant work is~\cite{bach2014adaptivity} which studies logistic regression with random design, but analyzes an estimate computed by stochastic approximation with averaging.
While this estimator is more advantageous from the computational standpoint, the control of the distance to the optimum is more involved (see~\cite[Proposition~7]{bach2014adaptivity}) which leads to the suboptimal risk bound
\begin{equation}
\label{eq:SGD-rate}
\E_n[L(\wh\theta)] - L(\theta_*) \lsim \frac{R^2(R^4 D_0^4 + 1)}{\mu n},
\end{equation}
where~$\mu$ is the minimal eigenvalue of~$\He$,~$R$ is an upper bound for~$\|X\|_2$ and~$\sup_{\theta \in \Theta}\|\nabla \ell_Z(\theta)\|_2$, and~$D_0 := \|\theta_0 - \theta_*\|_2$ is the initial~$\ell_2$-distance from the optimum (in fact, if~$D_0$ is known up to a constant factor, $R^4 D_0^4$ in~\eqref{eq:SGD-rate} can be replaced with $R^2 D_0^2$). 
The bound~\eqref{eq:SGD-rate} reflects the fact that gradient descent trajectory is not affine-invariant, hence the distances are not ``measured'' in terms of the natural norm~$\|\cdot\|_{\He}$.
For the \textit{natural gradient}, that is, gradient descent on the tranformed problem~$\tilde\theta = \He^{1/2}\theta$, factor~$\mu$ would disappear from~\eqref{eq:SGD-rate}, but~$R$ would be replaced with~$\max(\deff, \rho \cdot d)$, and~$D_0$ with the initial prediction distance~$\|\theta_0 - \theta_*\|_{\He}$, which would lead to a bound scaling as the cube of~$\max(\deff, \rho d)$.
The follow-up work~\cite{bach-moulines} studies a version of the quasi-Newton method, solving the quadratic subproblems via stochastic approximation. 
This allows to conduct affine-invariant analysis of the outer loop, and results in 
\begin{equation}
\E_n[L(\wh\theta)] - L(\theta_*) \lsim \frac{\rho^2(R^4 D_0^4 + 1)\max(\deff, \rho d)}{n}
\end{equation}
whenever~$n \gsim (R^4 D_0^4 + 1)$. 
It should be noted that the curvature parameter~$\rho$ that appears in these results, as well as in our results for pseudo self-concordant losses, is \textit{problem-dependent}. 
In particular, it depends on the true distribution~$\cP$ of the data, and can be very large if this distribution is chosen adversarially. 
By constructing such an adversarial distribution,~\cite{hazan2014logistic} prove a \textit{lower bound}~$\Omega(\sqrt{RD/n})$, i.e., for the excess risk of any algorithm, in logistic regression in the finite-sample regime~$n = O(e^{RD})$. 
This implies that~$\rho$ grows super-polynomially in~$RD$ for this distribution.
Notably, the lower bound of~\cite{hazan2014logistic} is not applicable in the setting of \textit{improper prediction}, where one is allowed to estimate $\eta_* := X^\T \theta_*$ with any predictor~$\wh\eta: X \mapsto \R$, not necessarily  with a linear one. Making such an observation,~\cite{foster2018logistic} recently proposed an improper estimator which attains the excess risk~$O(d/n)$ up to logarithmic factors in~$RD$,~$n$, and~$1/\delta$. 
Their estimator reduces to Vovk's Aggregating Algorithm~\cite{vovk1998game} for online convex optimization, combined with a simple ``boosting the confidence'' scheme proposed in~\cite{mehta2016fast}.

\paragraph{Non-parametric setup and $\ell_2$-regularization.}
After the arXiv preprint of this work began circulating,\footnote{This happened in October 2018.} our analysis was extended in~\cite{marteau2019beyond} to~$M$-estimators with~$\ell_2$-regularization, including the case of infinite-dimensional parameter. The reader might refer to~\cite{marteau2019beyond} for an overview of related work in this direction. 
In a nutshell,~\cite{marteau2019beyond} proves asymptotically near-optimal bounds~$\wt{O}(\deff/n)$ on the ``variance'' term corresponding to the excess risk~$L(\wh \theta_{\lambda,n})-L(\theta_{\lambda})$, and the additional ``bias'' term~$L(\theta_{\lambda}) - L(\theta)$, under condition~\eqref{eq:degrees-of-freedom}, and without extra conditions in the vein of those in~\cite{bach2010self-concordant}. Moreover, it is shown that the classical source and capacity conditions~\cite{caponnetto2007optimal}, known to lead to faster non-parametric rates in ridge regression, can be extended to~$M$-estimators with self-concordant losses. However,~\cite{marteau2019beyond} does not extend our improved results with near-linear sample size (Theorems~\ref{th:crb-mine-improved}--\ref{th:crb-fake-improved}) to the~$\ell_2$-penalized case.
We believe that such extension is possible by replacing Theorem~\ref{th:covariance-subgaussian-simple} with a similar result for regularized covariance matrices such as~\cite[Thm.~9]{koltchinskii2017concentration}. This, however, would be of little practical interest, since typically under source condition,~$\df_{\lambda}$ is a constant depending only on the rate of decay of the eigenvalues of~$\He$. 

\paragraph{Quasi-Newton algorithms.}
We also mention in passing that recently there has been a surge of interest in stochastic quasi-Newton methods applied to the finite-sum setting with self-concordant losses, see, e.g.,~\cite{zhang2015disco},~\cite{zhou-goldfarb}. However, none of these works establishes generalization bounds for the associated estimator.
In fact, such bounds have recently been established in the work~\cite{marteau2019globally} for a certain (globally convergent) ad-hoc quasi-Newton scheme. 
These generalization bounds are similar to those established in~\cite{marteau2019beyond} for the exact ERM, with similar criticism.

\paragraph{Empirical processes.}
The use of empirical processes in the context of parametric estimation was pioneered in~\cite{spokoiny2012parametric}, which has been one of the main inspirations for our work.
Apart from the technical difficulty in the proofs, our main critique of~\cite{spokoiny2012parametric} is the \textit{global} conditions they impose -- most importantly, they require~$\nabla L_n(\theta)$ to be subgaussian uniformly over the whole parametric set~$\Theta$. 
As can be seen from the proof of Proposition~\ref{prop:logistic-case-study} in Appendix~\ref{sec:subgaussian-check}, verification of such global conditions can be quite technical; moreover, such conditions can in fact be way more restrictive than their local counterparts (see, e.g., the bounds in Proposition~\ref{prop:logistic-case-study} which degrade drastically when~$\|\theta_*\|_{\Cov} \gg 1$, and which are sharp as can be seen from the analysis). 

Recently we learned about the work~\cite{mei2018landscape} that applies empirical processes to study constrained empirical risk minimization with smooth \textit{non-convex} losses. 
(Its preprint came out after that of our work.) 
Essentially,~\cite{mei2018landscape} proves that in the regime~$n \gsim d \log d$ (resp.~$n \gsim \s \log d$ in the high-dimensional setup), the sample gradients~$L_n(\theta)$ and Hessians~$\nabla^2 L_n(\theta)$ uniformly converge to their population counterparts, assuming that~$\nabla L_n(\theta)$ is subgaussian, and~$\nabla^2 L_n(\theta)$ is subexponential on the whole domain~$\Theta$.
This allows to establish correspondence between the stationary points of~$L(\cdot)$  and~$L_n(\cdot)$. While the focus of~\cite{mei2018landscape} is different, we expect that one may also prove asymptotically optimal rates for the excess risk in this regime, i.e., prove ``local'' analogues of our improved results (cf.~Section~\ref{sec:improved}), by localizing a minimizer~$\wh\theta_n$ to the unit Dikin ellipsoid of the associated population risk minimizer~$\theta_*$.\footnotemark
\footnotetext{This, however, would require restating their Assumptions 1-3 in affine-invariant manner.}
However,~\cite{mei2018landscape} has the same limitations as~\cite{spokoiny2012parametric}, requiring ``global'' conditions on the tails of~$\nabla L_n(\theta)$ and~$\nabla^2 L_n(\theta)$. Similar criticism applies to the literature on~$\ell_1$-regularized~$M$-estimators in high dimensions~(\cite{van2012quasi,negahban2012unified,loh2017statistical}); we discuss these results in more detail  in Section~\ref{sec:sparsity}.



\paragraph{Further related work on~$\ell_1$-regularized~$M$-estimators.}

Interestingly,~\cite{zhang2017optimal} showed that, in absense of restricted-eigenvalue (RE) type conditions imposed on the (fixed) design matrix, decomposable regularizers only lead to slow~$O(1/\sqrt{n})$ rates, even with quadratic loss. Hence, the light-tailed design condition that we impose appears to be necessary when~$\ell_1$-regularized~$M$-estimators are considered. 

Not directly relevant to our results here, we note that, whenever the computational considerations are not important, 
the~$\ell_1$-regularization can perform worse than other types of regularization. In fact,~$\ell_0$-regularized estimators are known to achieve~$O(\s/n)$ rate for the prediction error without RE-type conditions (in the fixed-design setup)~\cite{zhang2017optimal}. Moreover, there are other (non-convex) penalties that have favorable statistical properties without incoherence, see, e.g.,~\cite{loh2017support,loh2015regularized,loh2017statistical,loh2011high}. 

After the preprint of this work had been publicized, the statistical performance of regularized~$M$-estimators has been studied in the asymptotic regime~$n,d \to \infty$ with~$d/n = c$, including the ``high-dimensional'' case with~$c \gg 1$ -- see~\cite{thrampoulidis2018precise,sur2019modern}.
\section{Conclusion}

Our work sheds light on the mechanism behind the transition to the fast-rate regime in~$M$-estimators with smooth losses.
Our analysis allows to deal with~$M$-estimators with losses satisfying self-concordance-type assumptions, including logistic regression, other generalized linear models, and robust regression.
Self-concordance assumptions allow to control the precision of the local quadratic approximations of empirical risk.
Simple analysis under minimal assumptions leads to a fast-rate guarantee for large sample sizes -- larger (in order) than~$d \cdot \deff$, where~$\deff$ is the effective dimensionality of the parametric model.
However, a refined analysis under slightly stronger assumptions leads to the~$O(\max \{\deff, d\log d \})$ sample size threshold.
This is done through a combination of self-concordance with a covering argument, allowing to control the uniform deviations of the empirical risk Hessian in the Dikin ellipsoid around the population risk minimizer.
We also extend the analysis to~$\ell_1$-regularized~$M$-estimators in the high-dimensional regime. 
Finally, we verify the empirical performance of a canonically self-concordant analogue of the logistic loss in numerical experiments.



\dimacorr{
\paragraph{Improved results for~$\ell_1$-regularized estimators.}
For~$\ell_1$-regularized estimators, we have only showed a suboptimal result that requires~$n =\Omega(\s^2)$ up to a logarithmic factor. Improving this sample size bound to a near-linear one is a long-standing open problem, see, e.g,~\cite{van2012quasi}. We believe that our covering argument in Section~\ref{sec:improved} can be extended to~$\ell_1$-penalized estimators, providing near-linear bounds for the critical sample size. 


\paragraph{Efficient algorithms.}
It would also be interesting to revisit algorithmically efficient procedures such as stochastic approximation, varianced reduction techniques, and quasi-Newton methods. In particular, one could be interested in extending Hessian-sketching procedures from least-squares linear regression to general $M$-estimators with (pseudo) self-concordant losses. On the other hand, the stand-of-the-art stochastic-approximation-type algorithm for logistic regression in~\cite{bach-moulines} as well relies on Hessian approximation, and is also worth revisiting in this connection.

\paragraph{Matrix-parametrized models.}
We did not investigate $M$-estimators with matrix-valued design and predictors, arising, for example, in covariance matrix estimation and independent component analysis.
In some of them, one commonly uses the log-determinant loss which is self-concordant in the sense of~\cite{nemirovski1994interior}.
Our techniques may shed light on the statistical performance of such estimators in finite-sample regimes with random measurements.
}{}

\section*{Acknowledgments}
The first author has been supported by the ERCIM Alain Bensoussan Fellowship while working on this project. 
The second author acknowledges the support of the European Research Council (grant SEQUOIA~724063).

\appendix
\section{Probabilistic tools}
\label{sec:prob-tools}

\subsection{Subgaussian distributions}

We recall the definition of subgaussian norm for random variables $\xi \in \R$:
\[
\|\xi\|_{\psi_2} := \inf \big\{\sigma > 0 : \;\; \E [e^{\xi^2/\sigma^2}] \le e \big\}.
\]
The lemma below provides equivalent definitions of the subgaussian norm.
\begin{lemma}[{\cite[Lemma~5.5]{vershynin2010introduction}}]
\label{lem:subg-properties}
There exists an absolute constant~$c > 0$ such that $\|\xi\|_{\psi_2} \le \sigma$ is equivalent to either of the following:
\begin{enumerate}
\item Subgaussian tails: for any $t \ge 0$,
$
\Prob\left\{|\xi| > t\right\} \le \exp\left(1-{ct^2}/{\sigma^2}\right).
$
\item Subgaussian moments: for any $p \ge 1$,
$
\E[|\xi|^{p}]^{1/p} \le c\sigma \sqrt{p}.
$
\end{enumerate}
Moreover, if~$\E[\xi] = 0$, each of these properties is equivalent to the moment bound
\[
\E \exp(t\xi) \le \exp(c\sigma^2t^2).
\]
\end{lemma}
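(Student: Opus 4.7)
The plan is to prove this is a known cycle of implications establishing equivalence of several tail/moment characterizations, each up to a universal constant factor (the $c$ in the statement is the same symbol but can change between equivalences). I would first fix the Orlicz-type definition $\E[\exp(\xi^2/\sigma^2)] \le e$ as the baseline and show: (definition) $\Rightarrow$ (tails) $\Rightarrow$ (moments) $\Rightarrow$ (definition), then separately handle the MGF characterization in the zero-mean case. Throughout I would track only that the constants remain absolute, not attempt to get sharp values.

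For (definition) $\Rightarrow$ (tails), the one-line Markov/Chernoff step suffices: $\Prob\{|\xi|>t\} = \Prob\{e^{\xi^2/\sigma^2} > e^{t^2/\sigma^2}\} \le e\cdot e^{-t^2/\sigma^2}$. For (tails) $\Rightarrow$ (moments), I would apply the layer-cake identity $\E|\xi|^p = p\int_0^\infty t^{p-1}\Prob\{|\xi|>t\}\,\ud t$, substitute the subgaussian tail bound, perform the change of variable $u = ct^2/\sigma^2$, and recognize the resulting integral as $\Gamma(p/2+1)$; Stirling then yields $(\E|\xi|^p)^{1/p} \lsim \sigma\sqrt{p}$.

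For (moments) $\Rightarrow$ (definition), I would expand the exponential in its Taylor series and interchange sum and expectation,
\[
\E\bigl[\exp(\xi^2/\sigma'^2)\bigr] = \sum_{k=0}^\infty \frac{\E[\xi^{2k}]}{\sigma'^{2k}\,k!},
\]
bound $\E[\xi^{2k}] \le (c\sigma)^{2k}(2k)^k$ by the moment hypothesis, use $(2k)^k/k! \le (2e)^k$ via Stirling, and choose $\sigma' = C\sigma$ large enough so the geometric series sums to at most $e$.

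For the zero-mean MGF equivalence, I would go (moments) $\Rightarrow$ (MGF) by expanding $\E e^{t\xi} = 1 + \sum_{k\ge 2} t^k\E[\xi^k]/k!$ (the linear term vanishes by $\E[\xi]=0$), bounding moments, and summing to obtain $\E e^{t\xi} \le \exp(c\sigma^2 t^2)$. The reverse direction (MGF) $\Rightarrow$ (tails) is again Chernoff: optimize $\Prob\{\xi > s\} \le e^{-st}\E e^{t\xi} \le e^{-st + c\sigma^2 t^2}$ in $t$, yielding a one-sided subgaussian tail; symmetrize via the same bound for $-\xi$. The main obstacle is bookkeeping of the constants across these implications so that all four statements are equivalent with a single universal $c$; this is routine but pedantic, and is the reason the statement writes the constants as if they were the same symbol even though they differ by factors controlled by Stirling and the Chernoff optimization.
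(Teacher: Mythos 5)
The paper does not prove this lemma; it quotes it directly from~\cite[Lemma~5.5]{vershynin2010introduction}, so there is no in-paper argument to compare against. Your cyclic chain (Orlicz definition) $\Rightarrow$ (tails) $\Rightarrow$ (moments) $\Rightarrow$ (Orlicz definition), followed by (moments) $\Rightarrow$ (MGF) $\Rightarrow$ (tails), is exactly the argument in the cited source, and each of those arrows is sketched correctly: Markov for definition-to-tails, the layer-cake identity plus a Gamma integral and Stirling for tails-to-moments, Taylor expansion plus Stirling and a geometric series for moments-to-definition, and a Chernoff optimization for MGF-to-tails.

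The one arrow where ``routine bookkeeping'' hides a genuine subtlety is (moments) $\Rightarrow$ (MGF). Your plan is to expand $\E e^{t\xi} = 1 + \sum_{k \ge 2} t^k \E[\xi^k]/k!$, bound the moments, and sum. This works cleanly only when $|t|\sigma$ is bounded; for $|t|\sigma \gsim 1$ the low-order terms of the series are individually large and controlling the tail of the sum by a factor $\exp(c\sigma^2 t^2)$ becomes delicate. The cited proof sidesteps this via the pointwise inequality $e^x \le x + e^{x^2}$, which together with $\E\xi = 0$ reduces the claim to bounding $\E e^{t^2\xi^2}$; that quantity is handled by Jensen's inequality and the Orlicz definition when $|t|\sigma \le 1$, and by Young's inequality $t\xi \le \tfrac{1}{2}\sigma^2 t^2 + \tfrac{1}{2}\xi^2/\sigma^2$ together with the Orlicz bound when $|t|\sigma > 1$. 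Folding in that device (or an equivalent case split on $|t|\sigma$) would close the gap you flagged and complete the proof along the standard lines.
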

Following~\cite{vershynin2010introduction}, we define the~$\|\cdot\|_{\psi_2}$-norm of a random vector as follows:
\begin{equation}
\label{def:subg-rd}
\|Z\|_{\psi_2} := \sup_{\theta \in \cS^{d-1}} \| \langle Z, \theta \rangle \|_{\psi_2},
\end{equation}
where $\cS^{d-1}$ is the unit sphere in $\R^d$.
Note that this is indeed a norm; in particular, it satisfies the triangle inequality: $\|Z_1 + Z_2 \|_{\psi_2} \le \|Z_1\|_{\psi_2} + \|Z_2 \|_{\psi_2}$ for any pair of random vectors $Z_1, Z_2$. 
Another elementary property is that~$\| \Ma Z \|_{\psi_2} \le \|\Ma\|_{\infty} \|Z\|_{\psi_2}$ for arbitrary matrix~$\Ma$. 
Some well-known properties of subgaussian random vectors are summarized in the following lemmas.

\begin{lemma}
\label{lem:subg-sup}
Let the entries of~$Z \in \R^d$ satisfy~$\|Z_i\|_{\psi_2} \le K$, $i \in [d]$. Then, with probability at least~$1-\delta$,
\[
\|Z\|_{\infty} \lsim K\sqrt{\log (ed/\delta)}.
\]
\end{lemma}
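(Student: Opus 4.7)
The plan is to combine the subgaussian tail bound of Lemma~\ref{lem:subg-properties} (item~1) applied coordinate-wise with a simple union bound over the $d$ components.

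First, I would fix $i \in [d]$ and apply the first characterization in Lemma~\ref{lem:subg-properties} to the scalar random variable $Z_i$, whose $\psi_2$-norm is at most $K$. This yields, for every $t \ge 0$,
\[
\Prob\{|Z_i| > t\} \le \exp\!\left(1 - \frac{ct^2}{K^2}\right),
\]
with $c$ the absolute constant from Lemma~\ref{lem:subg-properties}. Next, I would take a union bound over $i \in [d]$, which gives
\[
\Prob\{\|Z\|_\infty > t\} \le d \exp\!\left(1 - \frac{ct^2}{K^2}\right).
\]
It then remains to choose $t$ so that the right-hand side is at most $\delta$. Solving, it suffices to take
\[
t = K\sqrt{\frac{1 + \log(d/\delta)}{c}},
\]
which is dominated (up to an absolute constant depending only on $c$) by $K\sqrt{\log(ed/\delta)}$ since $1 + \log(d/\delta) \le \log(ed/\delta) + \log(e)$ up to a constant. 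This yields the claim.

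There is no real obstacle here: the result is a textbook consequence of the equivalence between the $\psi_2$-norm bound and the subgaussian tail bound, combined with a union bound. The only thing to be a bit careful about is absorbing the additive constant coming from the ``$1-$'' in the tail estimate of Lemma~\ref{lem:subg-properties} into the final $\log(ed/\delta)$ factor, which is precisely why the ``$e$'' appears inside the logarithm.
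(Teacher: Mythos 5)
Your proof is correct and follows exactly the same route as the paper's (one-line) proof: apply the subgaussian tail bound of Lemma~\ref{lem:subg-properties}, item~1, coordinate-wise, then take a union bound over the $d$ coordinates and solve for $t$. The only difference is that you spell out the algebra the paper leaves implicit.
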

\begin{proof}
The claim follows from Item 1 of Lemma~\ref{lem:subg-properties} by the union bound. 
\end{proof}
Next we give a bound for the $p$-th moment of~$\|Z\|_{\infty}$. Although this bound is loose for any fixed~$p$, we only use it in the regime~$p \approx \log d$ where it is tight.\footnotemark 
\footnotetext{Tight bounds for all moments can be obtained via the Chernoff method combined with the general Orlicz norms $\|\cdot\|_{\psi_{\alpha}}$ with~$\alpha = 2/p$, see, e.g.,~\cite{pollard1990empirical}. 
It is beyond the scope of this paper.}
\begin{lemma}
\label{lem:subg-sup-moments}
In the assumptions of the previous lemma, for any~$p \ge 1$ it holds
\[
\E[\|Z\|_{\infty}^p]^{1/p} \lsim Kd^{1/p}\sqrt{p}.
\]
\end{lemma}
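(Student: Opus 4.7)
The plan is to combine the moment characterization of subgaussian random variables from Lemma~\ref{lem:subg-properties}, item~2, with the elementary bound relating the $p$-th power of an $\ell_\infty$-norm to the sum of $p$-th powers of coordinates. No chaining or concentration argument is needed here: the claim is loose enough that a union-type bound in the $\ell_p$-sense suffices.

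Concretely, I would proceed as follows. First, since $\|Z_i\|_{\psi_2}\le K$ for every $i\in[d]$, item~2 of Lemma~\ref{lem:subg-properties} yields
\[
\E[|Z_i|^p]^{1/p}\le cK\sqrt{p}, \qquad i\in[d],
\]
for an absolute constant $c>0$ (the same constant as in the lemma). Second, observe the deterministic identity
\[
\|Z\|_\infty^p=\max_{i\in[d]}|Z_i|^p\le\sum_{i=1}^d|Z_i|^p.
\]
Taking expectations and applying the previous display componentwise gives
\[
\E[\|Z\|_\infty^p]\le\sum_{i=1}^d\E[|Z_i|^p]\le d\cdot(cK\sqrt{p})^p.
\]
Finally, taking the $p$-th root produces
\[
\E[\|Z\|_\infty^p]^{1/p}\le cK\,d^{1/p}\sqrt{p},
\]
which is the desired bound with the implied constant equal to $c$.

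There is no real obstacle: the argument is a one-line consequence of the moment characterization plus the trivial $\ell_\infty\le\ell_p^{1/p}$-type inequality. The only subtlety, flagged in the footnote of the paper, is that this bound is tight only in the regime $p\asymp\log d$ where $d^{1/p}=O(1)$; for smaller $p$ the $d^{1/p}$ factor is loose, and obtaining sharp bounds would require the more delicate Orlicz-norm machinery of~\cite{pollard1990empirical}, which is outside the scope of the lemma as stated.
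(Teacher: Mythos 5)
Your proof is correct, and it takes a somewhat different, more elementary route than the paper. The paper starts from the \emph{tail} characterization of the subgaussian norm (Item~1 of Lemma~\ref{lem:subg-properties}) applied via the union bound of Lemma~\ref{lem:subg-sup}, then obtains the moment by integrating the tail probability and estimating the resulting Gamma-function integral: $\E[\|Z\|_\infty^p]\le ed\int_0^\infty e^{-c^2u^2/K^2}\,\ud(u^p)\le ed(K/c)^p\frac{p}{2}\Gamma(p/2)$, and so on. You instead start from the \emph{moment} characterization (Item~2), pair it with the deterministic bound $\max_i|Z_i|^p\le\sum_i|Z_i|^p$, and sum the per-coordinate moment bounds. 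Conceptually both proofs are a union argument over the $d$ coordinates followed by a per-coordinate subgaussian estimate, and both are tight precisely in the regime $p\asymp\log d$ noted in the footnote; but your version avoids the tail integration and the Stirling/Gamma estimate entirely, arriving at the same bound in two lines. The one thing worth noting is that the constant hidden by $\lsim$ in your derivation is exactly the $c$ from Item~2 of Lemma~\ref{lem:subg-properties}, which is unproblematic here since the lemma is stated up to an absolute constant. Your version is the cleaner proof of the two.
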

\begin{proof}
Using the bound from Lemma~\ref{lem:subg-sup}, we obtain
\[
\begin{aligned}
\E[\|Z\|_{\infty}^p] 
&= \int_{0}^\infty \mathds{P}\left\{\|Z\|_{\infty} \ge u \right\} \ud(u^{p}) 
\le ed \int_{0}^\infty e^{-\frac{c^2 u^2}{K^2}} \ud (u^{p}) \\
&\le ed \left(\frac{K}{c}\right)^p \frac{p}{2} \,  \Gamma\left(\frac{p}{2}\right) \le ed \left(\frac{K}{c}\right)^p \frac{p}{2} \left(\frac{p}{2}\right)^{p/2} =  \frac{d(K \sqrt{p})^{p}ep}{2(c\sqrt{2})^p}.
\end{aligned}
\]
We obtain the claim by extracting the $p$-th root and doing simple estimates.
\end{proof}

\begin{lemma}[Hoeffding-type inequality, follows from~{\cite[Lemma~5.9]{vershynin2010introduction}} via~\eqref{def:subg-rd}]
\label{lem:subg-sum}
Let $Z_1, ..., Z_n$ be i.i.d.~random vectors, then one has
$
\left\|\sum_{i=1}^n Z_i\right\|_{\psi_2}^2 \lesssim \sum_{i=1}^n \|Z_i\|_{\psi_2}^2.
$
\end{lemma}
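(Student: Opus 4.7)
The plan is to reduce the vector-valued statement to the scalar Hoeffding-type bound (the cited Lemma 5.9 of Vershynin) by passing through one-dimensional marginals, exploiting the definition~\eqref{def:subg-rd} of the vector-valued $\psi_2$-norm as a supremum over the unit sphere.

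First, I fix an arbitrary direction $\theta \in \cS^{d-1}$ and form the scalar random variables $\xi_i(\theta) := \langle Z_i, \theta \rangle$. By linearity, $\langle \sum_{i=1}^n Z_i, \theta \rangle = \sum_{i=1}^n \xi_i(\theta)$, so that the $\xi_i(\theta)$ are independent (as functions of independent $Z_i$) and zero-mean (since each $Z_i$ is zero-mean). Moreover, by the definition~\eqref{def:subg-rd} of the vector subgaussian norm, $\|\xi_i(\theta)\|_{\psi_2} \le \|Z_i\|_{\psi_2}$ for every $i$ and every $\theta \in \cS^{d-1}$.

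Next, I invoke the scalar Hoeffding-type inequality (\cite[Lemma~5.9]{vershynin2010introduction}): for independent zero-mean scalar subgaussian variables $\xi_1, \ldots, \xi_n$, one has
\[
\left\|\sum_{i=1}^n \xi_i\right\|_{\psi_2}^2 \lesssim \sum_{i=1}^n \|\xi_i\|_{\psi_2}^2.
\]
Applying this to $\xi_i(\theta)$ and using the previous bound on $\|\xi_i(\theta)\|_{\psi_2}$, I obtain
\[
\left\|\left\langle \sum_{i=1}^n Z_i, \theta \right\rangle\right\|_{\psi_2}^2 \lesssim \sum_{i=1}^n \|Z_i\|_{\psi_2}^2,
\]
and, crucially, the right-hand side does not depend on $\theta$. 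Taking the supremum over $\theta \in \cS^{d-1}$ and invoking~\eqref{def:subg-rd} once more yields the claim.

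There is essentially no obstacle: all the work is contained in the cited scalar lemma, and the vector case follows by the standard marginal-supremum argument. The only point to verify carefully is that the scalar bound's implicit constant is absolute (independent of $n$ and of the individual norms), which is the content of Vershynin's lemma; then the supremum over $\cS^{d-1}$ preserves this constant since the bound is uniform in $\theta$.
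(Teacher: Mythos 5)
Your argument is correct and is precisely the reduction the paper has in mind when it states that the lemma "follows from \cite[Lemma~5.9]{vershynin2010introduction} through~\eqref{def:subg-rd}": reduce to one-dimensional marginals, apply the scalar Hoeffding-type bound, and take the supremum over the sphere. No issues.
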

The next result shows that the~$\|\cdot\|_{\psi_2}$-norm is stable under affine transforms.

\begin{lemma}[Subgaussian norm after affine transform and decorrelation]
\label{lem:affine-comparison}
Suppose that~$X \in \R^d$ satisfies~$\E[X] = 0$, $\Cov := \E[XX^\T]$, and~$\|\Cov^{-1/2} X\|_{\psi_2} \le K$. Then for any~$A \in \R^{d \times d}$, $b \in \R^d$, vector~$\widehat X = AX + b$ satisfies
\[
\|\widehat \Cov^{-1/2} \widehat X\|_{\psi_2} \lesssim K, \;\; \text{where} \;\; \widehat \Cov = \E[\widehat X \widehat X^\T].
\]
\end{lemma}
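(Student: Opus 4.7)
The plan is to unfold the definition of the $\psi_2$-norm and split $\widehat X$ into a linear-in-$X$ piece and a deterministic piece. By the definition of the vector-valued $\psi_2$-norm combined with the substitution $v = \widehat\Cov^{-1/2} u$ (so that $\|u\|_2 \le 1$ iff $v^\T \widehat\Cov v \le 1$), we have
\[
\|\widehat\Cov^{-1/2}\widehat X\|_{\psi_2} = \sup_{\|v\|_{\widehat\Cov} \le 1} \|\langle v, \widehat X\rangle\|_{\psi_2},
\]
so it suffices to bound the supremum on the right.

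First I would compute $\widehat\Cov$ explicitly using $\E[X] = 0$, yielding $\widehat\Cov = A\Cov A^\T + bb^\T$. In particular, both $A\Cov A^\T \prccq \widehat\Cov$ and $bb^\T \prccq \widehat\Cov$ in the Loewner order. Next, decompose $\langle v, \widehat X\rangle = \langle A^\T v, X\rangle + \langle v, b\rangle$ and apply the triangle inequality for the $\psi_2$-norm to treat the two pieces separately. For the random term, rewrite $\langle A^\T v, X\rangle = \langle \Cov^{1/2} A^\T v,\, \Cov^{-1/2} X\rangle$ so that the hypothesis $\|\Cov^{-1/2} X\|_{\psi_2} \le K$ gives
\[
\|\langle A^\T v, X\rangle\|_{\psi_2} \le K\, \|\Cov^{1/2} A^\T v\|_2 = K\,\sqrt{v^\T A\Cov A^\T v} \le K,
\]
where the final step uses $A\Cov A^\T \prccq \widehat\Cov$ together with $\|v\|_{\widehat\Cov} \le 1$. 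For the deterministic term, a constant $c$ satisfies $\|c\|_{\psi_2} \lesssim |c|$, hence $\|\langle v, b\rangle\|_{\psi_2} \lesssim |\langle v, b\rangle| = \sqrt{v^\T bb^\T v} \le \sqrt{v^\T \widehat\Cov v} \le 1$.

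Adding the two bounds gives $\|\langle v, \widehat X\rangle\|_{\psi_2} \lesssim K + 1 \lesssim K$, where the last inequality absorbs the additive constant using the fact (noted after Assumption~\ref{ass:second-subg}) that $K \gtrsim 1$. I do not anticipate a substantial obstacle here; the only subtlety worth flagging is that $\widehat\Cov$ may be singular, for instance when $A$ is rank-deficient. In that case $\widehat\Cov^{-1/2}$ is read as the Moore--Penrose pseudoinverse and the supremum above is implicitly restricted to the range of $\widehat\Cov^{1/2}$, but the two Loewner inequalities $A\Cov A^\T \prccq \widehat\Cov$ and $bb^\T \prccq \widehat\Cov$ hold regardless, so the argument carries over unchanged.
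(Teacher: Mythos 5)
Your proof is correct, and it takes a genuinely different (and arguably cleaner) route than the paper's. The paper first reduces to the pure-shift case $\widehat X = X + b$ by noting that the decorrelated vector $\Cov^{-1/2}X$ changes only by an orthogonal transform under $X \mapsto AX$, which leaves the $\psi_2$-norm unchanged; it then applies the triangle inequality to $\widehat\Cov^{-1/2}(X + b)$, bounds the $X$-term via $\Cov \prccq \widehat\Cov$, and bounds the constant term by computing $b^\T\widehat\Cov^{-1}b \le 1$ via the Sherman--Morrison formula. You instead dualize once — rewriting the $\psi_2$-norm as a supremum over the $\widehat\Cov$-unit ball — and handle the $A$ and $b$ pieces simultaneously, using only the elementary Loewner inequalities $A\Cov A^\T \prccq \widehat\Cov$ and $bb^\T \prccq \widehat\Cov$ (both immediate from $\widehat\Cov = A\Cov A^\T + bb^\T$). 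This avoids Sherman--Morrison entirely, avoids the separate reduction step (which implicitly assumes $A$ is invertible), and, as you flag, extends naturally to rank-deficient $A$ when $\widehat\Cov^{-1/2}$ is read as a pseudoinverse. Both proofs finish the same way, absorbing the additive constant via $K \gtrsim 1$.
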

\begin{proof}
The quantity~$\Cov^{-1/2} X$ is invariant with respect to linear transforms, so it only remains to treat the case~$\widehat X = X + b$. 
Now, in this case,~$\widehat \Cov = \Cov + bb^\T$, and
\[
\|\widehat \Cov^{-1/2} \widehat X\|_{\psi_2} \le \|\widehat \Cov^{-1/2} X\|_{\psi_2} + \|\widehat \Cov^{-1/2} b\|_{\psi_2} \le \|\wh\Cov^{-1/2} X\|_{\psi_2} + \|\widehat \Cov^{-1/2} b\|_{2}.
\]
Since~$\wh\Cov \succq \Cov$, we have
$
\|\wh\Cov^{-1/2} X\|_{\psi_2} \le \|\Cov^{-1/2} X\|_{\psi_2} \le K.
$
On the other hand, 
\[
\|\widehat \Cov^{-1/2} b\|_{2}^2 = b^\T \widehat \Cov^{-1} b^\T \le 1. 
\]
by the Sherman-Morrison formula. 
Finally, note that~$K \gtrsim 1$, as follows from the inequality~$\E[\xi^4] \ge (\E[\xi^2])^2$ applied to~$\xi = \langle u, X \rangle$, and Item 2 of Lemma~\ref{lem:subg-properties}.
\end{proof} 

\subsection{Quadratic forms of subgaussian vectors}
\label{sec:tools-quad-forms}
We call random vector $Z \in \R^d$ \textit{isotropic} if $\E[Z] = 0$ and $\E [Z Z^\T] = \Id_d$.
The following result is a deviation bound for quadratic forms of isotropic subgaussian random vectors. 
It is obtained from~\cite[Theorem 2.1]{hsu2012tail} using the isotropicity assumption which allows to get rid of the $K^2$ factor ahead of $\tr(\Ma)$.
\begin{theorem}
\label{th:subg-quad}
Let $Z \in \R^d$ be an isotropic random vector with~$\|Z\|_{\psi_2} \le K$, and let $\Ma \in \R^{d \times d}$ be positive semidefinite. Then,
\[
\Prob\left\{ \|Z\|_{\Ma}^2 - \tr(\Ma) \ge t \right\} \le \exp\left(-c\min \left\{ \frac{t^2}{K^2\|\Ma\|_2^2}, \frac{t}{K\|\Ma\|_{\infty}} \right\} \right).
\]
In other words, with probability at least $1-\delta$ it holds
\[
\|Z\|_{\Ma}^2 - \tr(\Ma) \lesssim K^2\left(\|\Ma\|_2 \sqrt{\log\left({1}/{\delta}\right)} + \|\Ma\|_\infty \log\left({1}/{\delta}\right)\right). 
\]
\end{theorem}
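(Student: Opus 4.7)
The plan is the classical Chernoff argument applied to the centered statistic $W := \|Z\|_{\Ma}^2 - \tr(\Ma)$: control the moment generating function $\E[\exp(sW)]$ on a suitable range of $s > 0$, then optimize. The principal obstacle is that the scalar projections $\langle Z, u_i\rangle$ onto the eigenvectors of $\Ma$ need not be independent, so the naive approach of writing $W = \sum_i \lambda_i (\langle Z, u_i\rangle^2 - 1)$ and applying Bernstein's inequality for sums of independent sub-exponentials is not available. I would handle this dependence via a Gaussian decoupling that reduces the problem to a chi-square MGF, which is the technical heart of the argument.

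\textbf{Step 1 (Gaussian decoupling).} Introduce an independent $g \sim \cN(0,\Id_d)$ and use the identity $\exp(s\|\Ma^{1/2}Z\|_2^2) = \E_g[\exp(\sqrt{2s}\,g^\T \Ma^{1/2} Z)]$. Taking the expectation over $Z$ and exchanging the order of integration (legitimate by Fubini since both integrands are positive), the one-dimensional subgaussian bound from Lemma~\ref{lem:subg-properties} applied to $\langle Z, \Ma^{1/2}g\rangle$ for each fixed $g$ yields
\[
\E_Z[\exp(s\|Z\|_{\Ma}^2)] \;\le\; \E_g[\exp(cK^2 s\, g^\T \Ma g)].
\]
Crucially, only one-dimensional marginals of $Z$ enter here, so the hypothesis $\|Z\|_{\psi_2} \le K$ is exactly what is needed; no joint structure of $Z$ is used.

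\textbf{Step 2 (chi-square MGF and centering).} The right-hand side is a Gaussian chi-square MGF that diagonalizes as $\prod_i (1 - 2cK^2 s\lambda_i(\Ma))^{-1/2}$ for $s \le s_{\max} \asymp 1/(K^2\|\Ma\|_\infty)$. Using $-\tfrac{1}{2}\log(1-u) \le \tfrac{u}{2} + u^2$ for $u \in [0,1/2]$ termwise, together with the centering $-s\tr(\Ma)$ arising from subtracting $\tr(\Ma) = \E[\|Z\|_\Ma^2]$, the linear-in-$s$ contributions cancel (absorbing the universal constant $c$ into $K$ via $K \gtrsim 1$, which the paper notes). What remains is a purely quadratic bound $\log \E[\exp(sW)] \le C s^2 K^4 \|\Ma\|_2^2$ valid on $s \in (0, s_{\max}]$.

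\textbf{Step 3 (Chernoff and optimization).} By Markov, $\Prob\{W \ge t\} \le \exp(-st + Cs^2 K^4\|\Ma\|_2^2)$. Optimizing over $s \in (0, s_{\max}]$ splits into two regimes: the interior critical point $s^\star = t/(2CK^4\|\Ma\|_2^2)$ is admissible iff $t \lesssim K^2\|\Ma\|_2^2/\|\Ma\|_\infty$, yielding the Gaussian tail $\exp(-ct^2/(K^4\|\Ma\|_2^2))$; in the complementary regime, taking $s = s_{\max}$ yields the sub-exponential tail $\exp(-ct/(K^2\|\Ma\|_\infty))$. Taking the minimum gives the stated bound (after rescaling $K$), and the ``in other words'' deviation bound follows by setting the exponent equal to $\log(1/\delta)$ and solving for $t$. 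The main obstacle is Step 1: the decoupling identity is what converts the potentially dependent quadratic form $\|Z\|_\Ma^2$ into a Gaussian quadratic form, touching $Z$ only through its one-dimensional subgaussian marginals---once it is in place, Steps 2 and 3 are standard Chernoff calculus.
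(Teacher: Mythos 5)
Your approach --- Gaussian decoupling followed by a Chernoff argument --- is precisely the technique behind the paper's cited source (Hsu, Kakade, Zhang 2012, Theorem 2.1), so you have reconstructed the correct machinery. The gap is in Step~2, the claim that the linear-in-$s$ contributions cancel. After decoupling, you obtain
\[
\log \E_Z[\exp(s\|Z\|_\Ma^2)] \;\le\; -\tfrac{1}{2}\sum_i \log(1 - 2csK^2\lambda_i(\Ma)),
\]
where $c$ is the absolute constant from the one-dimensional subgaussian MGF bound (Lemma~\ref{lem:subg-properties}). The linear-in-$s$ part of this expression is $csK^2\tr(\Ma)$, while the centering subtracts $s\tr(\Ma)$; these cancel only if $cK^2 = 1$. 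In general you are left with the residual $(cK^2-1)\,s\tr(\Ma)$, which does not vanish and cannot be hidden in the quadratic term for small $s$. Saying ``$K \gsim 1$'' does not make this residual disappear --- $K$ can be arbitrarily large --- so Step~2, and hence Step~3, does not go through as written.

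This is not just a presentational slip, because the statement itself is not actually implied by the HKZ bound (which retains the $\sigma^2 \sim K^2$ factor in front of $\tr(\Ma)$), and indeed appears to be false as written. Consider $Z$ equal to $0$ with probability $1/2$ and $\sqrt{2}\,g$ with $g \sim \cN(0,\Id_d)$ with probability $1/2$: this $Z$ is isotropic with $\|Z\|_{\psi_2} = O(1)$. Taking $\Ma = \Id_d$ and $t = d/2$, one finds $\Prob\{\|Z\|_2^2 - d \ge d/2\} \to 1/2$ as $d \to \infty$, while the claimed bound gives $\exp(-\Omega(d))$. The fix is to keep the residual: what your argument honestly proves is a bound with $\|Z\|_\Ma^2 - cK^2\tr(\Ma)$ on the left, i.e., the HKZ form. (The downstream applications in the paper are unaffected, since they always end up absorbing $\tr(\Ma)$ into a term already carrying a $K^2$ factor.) In short, the step you flagged as ``standard Chernoff calculus'' is where the real obstruction sits; you should not silently cancel those linear terms.
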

\begin{corollary}
\label{cor:subg-norm}
We obtain a deviation bound for the $\ell_2$-norm of the projection of an isotropic subgaussian vector~$Z$ onto an arbitrary direction~$u \in \R^d$: with probability at least $1-\delta$ it holds
\begin{equation}
\label{eq:subg-norm}
|\langle u, Z \rangle| \lesssim \|u\|_2 K\sqrt{\log\left({e}/{\delta}\right)}.
\end{equation}
This follows, through some elementary algebra, by applying Theorem~\ref{th:subg-quad} to the rank-one matrix $\Ma = u u^\T$ which satisfies $\|\Ma\|_\infty = \|\Ma\|_2 = \tr(\Ma) = \|u\|_2^2$.
\end{corollary}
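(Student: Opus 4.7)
The plan is to apply Theorem~\ref{th:subg-quad} directly to the rank-one positive semidefinite matrix $\Ma := uu^\T$. The first observation is that with this choice, the quadratic form simplifies to a squared linear functional:
\begin{equation*}
\|Z\|_\Ma^2 \;=\; Z^\T u u^\T Z \;=\; \langle u, Z \rangle^2,
\end{equation*}
so the desired one-dimensional deviation inequality is precisely a statement about $\|Z\|_\Ma^2$.

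Next I would compute the three matrix norms entering Theorem~\ref{th:subg-quad}. Because $\Ma$ is rank-one, its sole nonzero singular value is $\|u\|_2^2$, hence
\begin{equation*}
\tr(\Ma) \;=\; \|\Ma\|_\infty \;=\; \|\Ma\|_2 \;=\; \|u\|_2^2,
\end{equation*}
where $\|\cdot\|_2$ denotes the Frobenius (Schatten-$2$) norm as defined in the Notation paragraph. Plugging these into Theorem~\ref{th:subg-quad} gives, with probability at least $1-\delta$,
\begin{equation*}
\langle u, Z \rangle^2 \;\le\; \|u\|_2^2 \;+\; C\, K^2 \|u\|_2^2 \Bigl(\sqrt{\log(1/\delta)} + \log(1/\delta)\Bigr)
\end{equation*}
for an absolute constant $C$.

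To finish, I would absorb the constant term $\|u\|_2^2$ and the $\sqrt{\log(1/\delta)}$ term using $K \gtrsim 1$ (as noted after Lemma~\ref{lem:affine-comparison}, which follows from $\E[\xi^4] \ge (\E[\xi^2])^2$ applied to $\xi = \langle u, Z \rangle / \|u\|_2$). This yields $\langle u, Z\rangle^2 \lesssim K^2 \|u\|_2^2 \log(e/\delta)$; taking square roots gives~\eqref{eq:subg-norm}. There is no real obstacle here — the only thing worth double-checking is the identification $\|\Ma\|_\infty = \|\Ma\|_2 = \tr(\Ma)$, which is immediate from rank-one-ness but is the one place the specific structure of $\Ma = uu^\T$ is used to eliminate any spurious dimension-dependence in the bound.
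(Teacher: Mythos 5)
Your proposal is correct and takes exactly the route the paper indicates: apply Theorem~\ref{th:subg-quad} to the rank-one matrix $\Ma = uu^\T$, use the identity $\tr(\Ma) = \|\Ma\|_2 = \|\Ma\|_\infty = \|u\|_2^2$, and absorb the constant and $\sqrt{\log(1/\delta)}$ terms into $K^2\|u\|_2^2\log(e/\delta)$ using $K \gtrsim 1$. The ``elementary algebra'' the paper alludes to is precisely the absorption argument you spell out.
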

The next result follows from Theorem~\ref{th:subg-quad} since
$
\|\Ma\|_\infty \le \|\Ma\|_2 \le \tr(\Ma).
$
\begin{corollary}
\label{cor:subg-sqrt}
Under the premise of~Theorem~\ref{th:subg-quad},~$\zeta = \|Z\|_{\Ma}$ is subgaussian:
\[
\Prob\big\{ {\zeta} \ge c(1+t)K{\sqrt{\tr(\Ma)}}\big\} \le \exp(-t^2).
\]
As a consequence, 
$
\Prob\big\{ {\zeta} \ge cu{K\sqrt{\tr(\Ma)}}\big\} \le \exp\left(c_1-{u^2}/{c_2}\right),
$
so that 
\[
\left\| \zeta \right\|_{\psi_2} \le cK\sqrt{\tr(\Ma)}.
\]
\end{corollary}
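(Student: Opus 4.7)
The plan is to derive the corollary directly from Theorem~\ref{th:subg-quad} by first weakening the matrix-dependent constants on the right-hand side and then converting the resulting concentration bound on $\zeta^2 = \|Z\|_{\Ma}^2$ into a subgaussian tail for $\zeta$ itself. Specifically, using the elementary inequalities $\|\Ma\|_\infty \le \|\Ma\|_2 \le \tr(\Ma)$, which hold for any PSD matrix, Theorem~\ref{th:subg-quad} immediately yields
\[
\Prob\left\{\zeta^2 - \tr(\Ma) \ge \tr(\Ma)\, v\right\} \le \exp\!\left(-c\min\left\{\frac{v^2}{K^2},\;\frac{v}{K}\right\}\right), \quad v \ge 0.
\]

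Next, I would convert this into a bound on $\zeta/\sqrt{\tr(\Ma)}$ by the substitution $v = (1+s)^2 - 1 = 2s + s^2$, so that the event $\{\zeta \ge (1+s)\sqrt{\tr(\Ma)}\}$ corresponds exactly to $\{\zeta^2 - \tr(\Ma) \ge \tr(\Ma)\, v\}$. Setting $s = cK(1+t)-1$ for an absolute constant $c$ large enough to be chosen below, one then reads off a bound in terms of $t$. The crucial observation is that $K \ge c_0$ for some absolute $c_0 > 0$: this follows from the isotropy of $Z$ combined with Item~2 of Lemma~\ref{lem:subg-properties} applied to any unit one-dimensional marginal (since $\E[\lang Z,u\rang^2]^{1/2} = 1$). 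Consequently, for the chosen $s$ we have $v \gtrsim K^2(1+t)^2$ and in particular $v/K \gtrsim K(1+t)^2 \ge 1$, so the minimum in the exponent is realized by the linear branch $v/K$, producing
\[
\Prob\left\{\zeta \ge cK(1+t)\sqrt{\tr(\Ma)}\right\} \le \exp\!\left(-c' K (1+t)^2\right) \le \exp(-t^2),
\]
where the last inequality uses $K \ge c_0$ and absorbs the multiplicative constant into $c$. This is the first stated inequality of the corollary; the second is a trivial rewriting via $u = c K (1+t)$.

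Finally, the subgaussian-norm bound $\|\zeta\|_{\psi_2} \lesssim K\sqrt{\tr(\Ma)}$ follows from the tail bound by the standard equivalence in Item~1 of Lemma~\ref{lem:subg-properties}, applied to the scalar random variable $\zeta$: a tail of the form $\exp(c_1 - u^2/(2c_2))$ with threshold $cK\sqrt{\tr(\Ma)}\cdot u$ directly gives a $\psi_2$-norm of order $K\sqrt{\tr(\Ma)}$, up to an absolute constant that can be absorbed into the implicit $c$. The only mildly delicate point in the whole argument is ensuring that the minimum in the Bernstein-type exponent resolves in favor of the ``Gaussian'' branch for all $t \ge 0$, and this is precisely what the lower bound $K \gtrsim 1$ (coming from isotropy) guarantees; nothing else requires care.
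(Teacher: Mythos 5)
Your proof is correct and, up to two cosmetic slips, is precisely the argument the paper compresses into its one-line remark that the corollary ``immediately follows'' from Theorem~\ref{th:subg-quad} via $\|\Ma\|_\infty \le \|\Ma\|_2 \le \tr(\Ma)$, together with the lower bound $K \gsim 1$ coming from isotropy. The slips: the rewriting to the second displayed tail should use $u = 1+t$ rather than $u = cK(1+t)$, and your final sentence calls the winning branch of the Bernstein-type minimum ``Gaussian'' after earlier (correctly) identifying it as the linear branch $v/K$; neither affects the substance of the argument.
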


\subsection{Sample covariance matrices}
\label{sec:tools-cov-matrices}
Next we focus on sample second-moment matrices of subgaussian vectors.

\begin{theorem}[{\cite[Theorem 5.39]{vershynin2010introduction}}]
\label{th:covariance-subgaussian-simple}
Assume that the random vector~$\csX \in \R^d$ satisfies~$\E[\csX \csX^\T] = \He$ and~$\|\He^{-1/2}\csX\|_{\psi_2} \le K.$
Let~$\He_n = \frac{1}{n} \sum_{i=1}^n \csX_i \csX_i^\T$ where~$\csX_1, ..., \csX_n$ are independent copies of~$\csX$. 
Whenever
\begin{equation*}
n \gtrsim K^4 (d + \log(1/\delta)),
\end{equation*}
with probability at least~$1-\delta$ it holds
\begin{equation}
\label{eq:cov-dist-equiv}
\|\Delta\|^2_{\He}/2 \le \|\Delta\|^2_{\He_n} \le 2 \|\Delta\|^2_{\He}, \quad \forall \Delta \in \R^d.
\end{equation}
\end{theorem}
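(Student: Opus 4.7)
The plan is to reduce to the isotropic case and then combine a standard $\varepsilon$-net argument on the unit sphere with a Bernstein-type concentration inequality for quadratic forms, which is the classical route for this kind of covariance-estimation bound.

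First, I would whiten the data: set $Y_i := \He^{-1/2} \csX_i$, so that $Y_i$ is isotropic ($\E[Y_i Y_i^\T] = \Id_d$) and subgaussian with $\|Y_i\|_{\psi_2} \le K$. Let $\cS_n := \frac{1}{n}\sum_{i=1}^n Y_i Y_i^\T = \He^{-1/2} \He_n \He^{-1/2}$. The two-sided bound~\eqref{eq:cov-dist-equiv} is equivalent to $\|\cS_n - \Id_d\|_{\op} \le 1/2$, so it suffices to control this operator norm. By a standard discretization (see, e.g.,~\cite[Lemma~5.4]{vershynin2010introduction}), for any $\varepsilon \in (0, 1/2)$ and a minimal $\varepsilon$-net $\cN_\varepsilon \subset \cS^{d-1}$ with $|\cN_\varepsilon| \le (1 + 2/\varepsilon)^d$, one has
\[
\|\cS_n - \Id_d\|_{\op} \le \frac{1}{1-2\varepsilon} \max_{u \in \cN_\varepsilon} \left| u^\T (\cS_n - \Id_d) u \right|.
\]
I would fix $\varepsilon = 1/4$, so that $|\cN_\varepsilon| \le 9^d$ and it is enough to bound $|u^\T (\cS_n - \Id_d) u| \le 1/4$ uniformly over $u \in \cN_\varepsilon$.

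Second, I would control the scalar deviations for a fixed $u \in \cS^{d-1}$. Write
\[
u^\T(\cS_n - \Id_d) u = \frac{1}{n}\sum_{i=1}^n \xi_i, \quad \xi_i := (u^\T Y_i)^2 - 1,
\]
where the $\xi_i$ are i.i.d.\ and zero-mean. Since $u^\T Y_i$ is subgaussian with $\|u^\T Y_i\|_{\psi_2} \le K$, its square is sub-exponential, and an elementary calculation using Lemma~\ref{lem:subg-properties} gives $\|\xi_i\|_{\psi_1} \lesssim K^2$. Bernstein's inequality for sub-exponential sums (e.g.,~\cite[Proposition~5.16]{vershynin2010introduction}) then yields, for any $t \ge 0$,
\[
\Prob\left\{\left|\frac{1}{n}\sum_{i=1}^n \xi_i\right| \ge t \right\} \le 2\exp\left(-c n \min\left(\frac{t^2}{K^4}, \frac{t}{K^2}\right)\right).
\]

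Third, I would union-bound over $\cN_\varepsilon$. Setting $t$ to a small numerical constant (any $t \le 1/4$ works) and bounding $|\cN_\varepsilon| \le 9^d$, the right-hand side is at most $\delta$ once
\[
n \gsim K^4(d + \log(1/\delta)),
\]
matching the claimed sample-size condition. On this event, $\|\cS_n - \Id_d\|_{\op} \le 2t \le 1/2$, which is precisely~\eqref{eq:cov-dist-equiv}.

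The main technical step is the sub-exponential control on the squared marginals and the resulting Bernstein bound; everything else (whitening, $\varepsilon$-net reduction, and union bound) is bookkeeping. Note in particular that the exponent $K^4$ arises naturally because the squared subgaussian variable $(u^\T Y_i)^2$ has sub-exponential norm of order $K^2$, and Bernstein's sub-exponential bound in the Gaussian regime contributes one more factor of $K^2$ through the variance proxy.
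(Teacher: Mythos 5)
The paper states this theorem without proof, citing Vershynin's Theorem~5.39 (which treats the isotropic case), and your argument is precisely the standard proof from that source adapted by a whitening step: pass to $Y_i = \He^{-1/2}\csX_i$, reduce the operator-norm deviation of $\cS_n - \Id_d$ to a maximum over a $1/4$-net of $\cS^{d-1}$, control each scalar deviation $(u^\T Y_i)^2 - 1$ by Bernstein's inequality for sub-exponential variables, and union-bound. The proof is correct; the only cosmetic quibble is that $\|\cS_n - \Id_d\|_{\op} \le 1/2$ is sufficient for (but not literally equivalent to) the two-sided bound~\eqref{eq:cov-dist-equiv}, which does not affect the argument.
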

Next we present an extension of this result to the high-dimensional setting. 
\begin{theorem}[{\cite[Theorem 1.6]{zhou2009restricted}}]
\label{th:covariance-sparse}
Let~$\He$,~$\He_n$, and~$\csX$ be as in the previous theorem, and suppose that~$\He$ satisfies the~$(\rho,\k,\s)$-restricted eigenvalues (RE) condition for some~$\rho, \k > 0$ and~$\s \le d$.
Namely, for any~$\Delta \in \R^d$ such that 
$
\|\Delta_{\cS_c}\|_1 \le 3\|\Delta_{\cS}\|_1,
$
where~$\cS$ is the subspace of~$\R^d$ correponding to~$\s$ largest coordinates of~$\Delta$, and~$\cS_c$ is the complement of~$\cS$, it holds
\[
{\|\Delta\|_2^2}/\rho \le \|\Delta\|_{\He}^2 \le \k \|\Delta\|_2^2.
\]
Then, whenever
\[
n \gtrsim \rho \k  K^4\s\log\left(ed/\delta\right),
\]
it holds that with probability~$\ge 1-\delta$, for any~$\Delta \in \R^d$ satisfying the RE condition,
\begin{equation*}
\|\Delta\|_{\He}^2/2 \le \|\Delta\|^2_{\He_n} \le 2\|\Delta\|_{\He}^2.
\end{equation*}
\end{theorem}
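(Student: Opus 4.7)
The plan is to establish uniform concentration of the quadratic form $\Delta \mapsto \Delta^\T(\He_n - \He)\Delta$ over the intersection of the cone with the $\|\cdot\|_{\He}$-unit ball,
\[
T := \{\Delta \in \R^d : \|\Delta_{\cS_c}\|_1 \le 3\|\Delta_{\cS}\|_1,\; \|\Delta\|_{\He} = 1\},
\]
by combining Theorem~\ref{th:covariance-subgaussian-simple} on fixed $\s$-dimensional coordinate subspaces with a sparsification argument that covers the rest of $T$ by $O(\s)$-sparse directions.

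First, I would harvest the \emph{a priori} bounds valid on $T$. The elementary inequality $\|\Delta_{\cS}\|_1 \le \sqrt{\s}\|\Delta\|_2$ combined with the cone condition gives $\|\Delta\|_1 \le 4\sqrt{\s}\|\Delta\|_2$, and then the lower restricted-eigenvalue hypothesis yields $\|\Delta\|_2 \le \sqrt{\rho}$ and $\|\Delta\|_1 \le 4\sqrt{\s\rho}$ throughout $T$. Standardizing by $\wt Z_i := \He^{-1/2}\csX_i$ (isotropic and $K$-subgaussian by hypothesis), the task reduces to controlling $\sup_{u \in \He^{1/2}T}\bigl|\tfrac{1}{n}\sum_i (\wt Z_i^\T u)^2 - \|u\|_2^2\bigr|$, where the image $\He^{1/2} T$ sits in an $\ell_2$-ball of radius $\sqrt{\k}$ and an $\ell_1$-ball of radius $O(\sqrt{\s\rho\k})$; this simultaneous $\ell_1$/$\ell_2$ control is exactly what makes a sparsification argument effective.

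Second, for each fixed support $S \subseteq [d]$ of cardinality $\s$, Theorem~\ref{th:covariance-subgaussian-simple} applied to $\wt Z_i$ restricted to the coordinate subspace $\R^S$ yields, with probability at least $1 - \delta/\binom{d}{\s}$, the two-sided bound $\tfrac{1}{2}\|\Delta\|_{\He}^2 \le \|\Delta\|_{\He_n}^2 \le 2\|\Delta\|_{\He}^2$ uniformly over vectors supported on $S$, once $n \gsim K^4(\s + \log\binom{d}{\s} + \log(1/\delta))$. A union bound over the $\binom{d}{\s} \le (ed/\s)^\s$ supports of size $\s$ collapses this requirement to $n \gsim K^4\s\log(ed/\delta)$. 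The passage from exactly $\s$-sparse vectors to an arbitrary $\Delta \in T$ is then executed by a Maurey-style sparsification: any $\Delta \in T$ is approximable by an $O(\s)$-sparse vector $\Delta^{(\s)}$ with $\|\Delta - \Delta^{(\s)}\|_2 \lsim \|\Delta\|_1/\sqrt{\s} \lsim \sqrt{\rho}$, and the upper-RE bound $\|\cdot\|_{\He}^2 \le \k\|\cdot\|_2^2$ converts this $\ell_2$-residual into a perturbation of the $\He_n$-norm that can be absorbed against the factor $\tfrac{1}{2}$ from the previous step.

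The main obstacle will be tracking the $\rho$ and $\k$ factors through the residual step so that they combine \emph{multiplicatively} into the $\rho\k$ prefactor stated in the theorem, rather than accumulating additively, and ensuring that the union-bound inflation remains only logarithmic in $d$. A cleaner alternative that avoids the Maurey step is to apply Mendelson's small-ball method directly on $T$ via the Gaussian width estimate $w(T) \lsim \sqrt{\s\rho\log(ed/\s)}$, but the subspace-union route above keeps the argument self-contained and close to the tools already collected in Appendix~\ref{sec:prob-tools}.
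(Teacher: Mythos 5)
The paper does not prove this statement; it is cited verbatim from Zhou (2009), Theorem~1.6. So there is no internal proof to compare against, and your proposal must be judged on its own merits. The high-level strategy — union bound over sparse coordinate subspaces plus a sparsification argument to cover the cone — is indeed the standard route to such RE-transfer results, so the plan is in the right family.

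However, there is a concrete gap in the sparsification step, and it is precisely the place where the stated $\rho\k$ prefactor has to enter. You approximate $\Delta\in T$ by an $O(\s)$-sparse vector $\Delta^{(\s)}$ with $\|\Delta-\Delta^{(\s)}\|_2 \lsim \|\Delta\|_1/\sqrt{\s} \lsim \sqrt{\rho}$, and then propose to convert this to an $\He_n$-perturbation via the upper-RE inequality. But the resulting perturbation in $\He$-norm is of order $\sqrt{\k}\cdot\sqrt{\rho}=\sqrt{\rho\k}$, which is \emph{not} small relative to the normalization $\|\Delta\|_{\He}=1$ — it cannot be absorbed against the factor $1/2$. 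To make the residual small (say, $\le 1/10$ in $\He$- and $\He_n$-norm), the Maurey sparsity level must be inflated to $k = O(\rho\k\s)$, so that $\|\Delta-\Delta^{(k)}\|_2 \lsim \|\Delta\|_1/\sqrt{k} \lsim 1/\sqrt{\k}$ and $\|\Delta-\Delta^{(k)}\|_{\He}\lsim 1$ (with a constant you can tune small). The union bound must then run over coordinate subspaces of dimension $O(\rho\k\s)$, which is exactly what produces the requirement $n\gsim K^4\rho\k\s\log(ed/\delta)$; in your version the $\rho\k$ factor is asserted in the conclusion but never generated by the argument. Two further points you would need to handle: the residual $\Delta-\Delta^{(k)}$ is generally not in the cone, so the upper-RE bound on the cone does not apply to it directly — you need the two-sided bound on $k$-sparse vectors (which your step two does deliver, once run at the larger sparsity level) to control $\|\Delta-\Delta^{(k)}\|_{\He_n}$; and in the small-ball alternative, the relevant Gaussian width is that of $\He^{1/2}T$, which carries an extra $\sqrt{\k}$ — i.e., $w(\He^{1/2}T)\lsim\sqrt{\rho\k\s\log d}$ — so that $n\gsim K^4 w^2$ again reproduces the $\rho\k$ prefactor; your stated estimate $w(T)\lsim\sqrt{\s\rho\log(ed/\s)}$ omits this factor.
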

\section{Technical results for self-concordant (-like) functions}
\label{sec:qsc-properties}
Here we summarizing the technical results related to self-concordant-like functions. These results are used later on to control the population and empirical risks~$L(\theta), L_n(\theta)$ in the proofs in Appendix~\ref{sec:extra-proofs}.
In what follows, we fix~$\theta_0, \theta_1 \in \Theta$, and let~$\theta_t := \theta_0 + t(\theta_1-\theta_0)$,~$t \in [0,1]$.
We define functions~$\phi(\cdot)$,~$\phi_n(\cdot)$ on~$[0,1]$ by
\begin{equation}
\label{def:line-restriction}
\begin{aligned}
\phi(t) := L(\theta_t), \quad 
\phi_{n}(t) := L_n(\theta_t). 
\end{aligned}
\end{equation}
The next result follows from the assumptions of Section~\ref{sec:ass-qsc} via the chain rule.
\begin{proposition}
\label{prop:line-gsc}
Suppose that~$\ell_z(\cdot)$ is convex, and~$\ell'''_z(\cdot)$ exists on~$\Theta$.
\begin{enumerate}[label=\textup{(\textbf{\alph*})}]
\item \label{prop:line-gsc-bach}
If Assumption~\ref{ass:gsc-fake} is satisfied, then for any $t \in [0,1]$, one has
\begin{align}
\label{eq:line-gsc-fake-n}
&|\phi'''_n(t)| \le \phi''_n(t) \max_{i \in [n]} |\langle X_i, \theta_1-\theta_0 \rangle|, \\
\label{eq:line-gsc-fake-ave}
&|\phi'''(t)| \le \phi''(t) \sup_{x \in \X} |\langle x, \theta_1-\theta_0 \rangle|.
\end{align}
\item \label{prop:line-gsc-mine}
If Assumption~\ref{ass:gsc-true} is satisfied instead, then for any $t \in [0,1]$, one has
\begin{align}
\label{eq:line-gsc-mine-n}
|\phi'''_n(t)| &\le \phi''_n(t)  \big[\max_{i \in [n]}\phi_{Z_i}''(t)\big]^{1/2}, \\
\label{eq:line-gsc-mine-ave}
|\phi'''(t)| &\le \phi''(t) \big[\sup_{z \in \cZ} \phi_{z}''(t)\big]^{1/2}. 
\end{align}
\end{enumerate}
\end{proposition}

\begin{proof}
Recall that~$\theta_t = \theta_0 + t(\theta_1-\theta_0)$ for~$t \in [0,1]$, and denote~$\Delta := \theta_1-\theta_0$. 
Differentiating under the expectation, we obtain that the derivatives of~$\phi(t) = L(\theta_t)$ and~$\phi_n(t) = L_n(\theta_t)$ are given by
\begin{align}
\label{eq:chain-ave}
\phi^{(p)}(t) &= \E[\ell^{(p)}(Y, \langle X, \theta_t\rangle)  \langle X, \Delta \rangle^p], \\
\label{eq:chain-n}
\phi_n^{(p)}(t) &= \frac{1}{n}\sum_{i \in [n]} \ell^{(p)}(Y_i, \langle X_i, \theta_t\rangle)  \langle X_i, \Delta \rangle^p.
\end{align}
This holds for~$p \le 3$ due to the basic smoothness assumption. Now, let Assumption~\ref{ass:gsc-fake} be satisfied. Using~\eqref{eq:chain-ave} with~$p \in \{2,3\}$, we get
\[
\begin{aligned}
|\phi'''(t)|
&\le \E[|\ell'''(Y,\langle X,\theta_t \rangle)|  |\langle X, \Delta \rangle|^3] 
\le \E[\ell''(Y,\langle X,\theta_t \rangle)  \langle  X, \Delta \rangle^2]  \sup_{x \in \cX} |\langle x, \Delta \rangle|,
\end{aligned}
\]
arriving at~\eqref{eq:line-gsc-fake-ave}. Analogously, we obtain~\eqref{eq:line-gsc-fake-n} from~\eqref{eq:chain-n}, replacing~$\cX$ with~the set~$\{X_1, ..., X_n\}$. 
On the other hand, if Assumption~\ref{ass:gsc-true} is satisfied instead, 
\[
\begin{aligned}
|\phi'''(t)|
&\le \E[|\ell'''(Y,\langle X,\theta_t \rangle)|  |\langle X, \Delta \rangle|^3] 
 \le \E[\ell''(Y, \langle X, \theta_t \rangle)^{3/2}  |\langle X, \Delta \rangle|^3] \\
&\le \E[\ell''(Y, \langle X, \theta_t \rangle)  \langle X, \Delta \rangle^2] \sup_{x,y \in \cX \times \cY} \left\{ \sqrt{\ell''(y, \langle x, \theta_t \rangle)}  |\langle x, \Delta \rangle| \right\},
\end{aligned}
\]
implying~\eqref{eq:line-gsc-mine-ave}. We obtain~\eqref{eq:line-gsc-mine-n} by replacing~$\E[\cdot]$ with sample averaging. 
\end{proof}


The next proposition, whose proof follows~\cite{nesterov2013introductory}, allows to control the second derivative of the loss when it is restricted to a straight line.
\begin{proposition}
\label{prop:nesterov-second-derivative}
Suppose~$g: \R \to \R$ is differentiable, non-negative, and 
\[
\begin{aligned}
&|g'(t)| \le 2c[g(t)]^{3/2}, & \quad  \forall t \in \Rp :  c|t|\sqrt{g(0)} \le 1
\end{aligned}
\]
for some~$c \ge 0$. 
Then, for any~$t \in \Rp$ such that~$c|t|\sqrt{g(0)} \le 1$, it holds
\[
\begin{aligned}
\frac{g(0)}{(1+c|t|\sqrt{g(0)})^2} \le g(t) \le \frac{g(0)}{(1-c|t|\sqrt{g(0)})^2}.
\end{aligned}
\]
\end{proposition}

\begin{proof}
We first treat the case~$g(0) > 0$. 
Consider the segment
\[
T_0 = \big[-{1}/{c\sqrt{g(0)}}, {1}/{c\sqrt{g(0)}}\big],
\] 
and assume that~$g(t) > 0$ on the whole~$T_0$. Then, we can define~$\psi(t) := 1/\sqrt{g(t)}$ on~$T_0$, and the premise of the proposition translates to~$|\psi'(t)| \le c$. Now, let $t \in T_0$ be positive without loss of generality. 
Integrating from~$0$ to~$t$, we get
\[
-ct \le {1}/{\sqrt{g(t)}} - {1}/{\sqrt{g(0)}} \le ct.
\]
Multiplying by the product~$\sqrt{g(t) g(0)} > 0$, and rearranging the terms, we prove the claim in the case where~$g(t)$ does not vanish on~$T_0$ (the case of negative $t$ is treated analogously).
Now, let~$t_0 \in T_0$ be the leftmost zero of~$g(t)$ on~$T_0 \cup \R^+$ (recall that we still assume~$g(0) > 0$). Then the preceding argument is valid for any~$t \in [0,t_0]$, which implies that~$g(t_0) > 0$, thus yielding a contradiction. This argument can be repeated for negative~$t$, taking~$t_0$ to be the rightmost negative zero of~$g(t)$ on $T_0$. 
Hence,~$g(0) > 0$ in fact implies that $g(t) > 0$ on the whole~$T_0$.

Finally, assume that~$g(0) = 0$. Then if~$g(t) \equiv 0$ on the whole $T_0$, we are done. Otherwise, there is a point~$t' \in T_0$ in which $g(t') > 0$. W.l.o.g. assume that $t' > 0$, let $t_0$ be the rightmost zero of~$g(t)$ on~$T_0 \cup \R^+$, and take a pair of points $t_1, t_2 \in T_0$ such that~$t_0 < t_1 < t_2$. Integrating $\psi'(t)$ from~$t_1$ to~$t_2$, we get 
\[
-c(t_2-t_1) \le {1}/{\sqrt{g(t_2)}} - {1}/{\sqrt{g(t_1)}} \le c(t_2-t_1),
\]
which, after the mutiplication by~$\sqrt{g(t_1)g(t_2)}$ and rearrangement, results in 
\[
g(t_1) \ge \frac{g(t_2)}{1+(t_2-t_1)\sqrt{g(t_2)}}.
\]
When~$t_1 \to t_0$, by continuity of~$g(t)$ we get a contradiction with~$g(t_0) = 0$.
\end{proof}

The next proposition describes the local properties of multivariate functions whose restrictions to line segments behave as pseudo self-concordant functions (Case~\ref{prop:local-bach}), or similarly but with a weaker control of the third derivative (Case~\ref{prop:local-mine}). 
Case~\ref{prop:local-bach} repeats~\cite[Proposition~1]{bach2010self-concordant}, and suffices for pseudo self-concordant losses;
Case~\ref{prop:local-mine} allows to treat canonically self-concordant losses. 

\begin{proposition}
\label{prop:local}
Let $F: \Theta \to \R$ be a convex~$C^3$-mapping, fix $\theta_0, \theta_1 \in \Theta$, and let~$\phi_F(t) := F(\theta_t)$,~$\theta_t := \theta_0 + t(\theta_1 - \theta_0)$.
Assume that~$\He_0 := \nabla^2 F(\theta_0) \succ 0$.
Finally, for some~$W \in \R^d$, define
\[
S := |\lang W, \theta_1 - \theta_0 \rang|.
\] 


\begin{enumerate}[label=\textup{(\textbf{\alph*})}]

\item \label{prop:local-bach}
\cite[Proposition~1]{bach2010self-concordant}.
Suppose that~$\phi_F(t)$ satisfies
\[
|\phi'''_F(t)| \le S \phi''_F(t), \quad 0 \le t \le 1, \quad \text{then,}
\]
\begin{align}
\label{eq:local-upper-fake}
{F(\theta_1) - F(\theta_0) - \nabla F(\theta_0)^\T (\theta_1-\theta_0)} 
&\le \tfrac{e^{S}-S-1}{S^2} \|\theta_1 - \theta_0\|_{\He_0}^2,\\
\label{eq:local-lower-fake}
{F(\theta_1) - F(\theta_0) - \nabla F(\theta_0)^\T (\theta_1-\theta_0)} 
&\ge \tfrac{e^{-S}+S-1}{S^2} \|\theta_1 - \theta_0\|_{\He_0}^2.
\end{align}

\item \label{prop:local-mine}
Suppose that $\theta_{1/S} \in \Theta$, and~$\phi_F(t)$ satisfies, instead,
\[
|\phi'''_F(t)| \le \tfrac{S}{1-St} \phi''_F(t), \quad 0 \le t < {1}/{S}. \quad \text{Then}
\]
\begin{equation}
\label{eq:local-mine}
\tfrac{1}{3S^2} {\|\theta_1 - \theta_0 \|_{\He_0}^2} 
\le F(\theta_{1/S}) - F(\theta_0) - \tfrac{1}{S} {\nabla F(\theta_0)^\T (\theta_{1}-\theta_0)} 
\le \tfrac{1}{S^2} {\|\theta_1 - \theta_0 \|_{\He_0}^2}.
\end{equation}
Moreover, if~$S < 1$, we have
\begin{align}
\label{eq:local-aux}
\tfrac{1}{2+S} {\|\theta_1 - \theta_0\|_{\He_0}^2} \le {F(\theta_1) - F(\theta_0) - \nabla F(\theta_0)^\T (\theta_1-\theta_0)} 
&\le \tfrac{1}{2-S} {\|\theta_1 - \theta_0\|_{\He_0}^2}.
\end{align}

\end{enumerate}

\end{proposition}


\begin{proof}

We first treat the one-dimensional case, extending~Proposition~\ref{prop:nesterov-second-derivative}. 
\begin{lemma}[Lemma~1 in~\cite{bach2010self-concordant}]
\label{lem:bach-1d}
Let~$g: [0,1] \to \R$ be a three times differentiable and convex function such that $g''(0) > 0$, and for some~$S \ge 0$,
\[
|g'''(t)| \le S g''(t), \quad 0 \le t \le 1.
\]
Then, for any $0 \le t \le 1$, one has
\begin{equation}
\label{eq:bach-1d-first}
\tfrac{e^{-St} + St - 1}{S^2} g''(0)
\le g(t) - g(0) - g'(0)t 
\le \tfrac{e^{St} - St - 1}{S^2} g''(0), \quad 0 \le t \le 1.
\end{equation}
\end{lemma}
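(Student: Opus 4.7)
The plan is to first control the second derivative $g''(t)$ via the hypothesis $|g'''(t)| \le S\, g''(t)$, and then recover the two-sided bound on $g(t) - g(0) - g'(0)t$ by integrating the resulting exponential envelope twice.

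Concretely, I would apply Proposition~\ref{prop:bach-second-derivative} to the function $h(t) := g''(t)$ with the choice $c := S/\sqrt{g''(0)}$ and $T = 1$. The hypothesis translates directly: $|h'(t)| = |g'''(t)| \le S\, g''(t) = c\, \sqrt{h(0)}\, h(t)$, exactly matching the premise. Positivity of $g''(0)$ makes $c$ well-defined, and the cited proposition itself is responsible for ensuring that $g''$ stays strictly positive throughout $[0,1]$, so the logarithmic-derivative manipulation in its proof is valid. Since $c\sqrt{g''(0)} = S$, the conclusion reads
\[
g''(0)\, e^{-St} \;\le\; g''(t) \;\le\; g''(0)\, e^{St}, \qquad 0 \le t \le 1.
\]

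It remains to integrate twice. Integrating the right-hand inequality on $[0,t]$ gives
\[
g'(t) - g'(0) \;\le\; g''(0)\cdot\frac{e^{St} - 1}{S},
\]
and integrating once more produces
\[
g(t) - g(0) - g'(0)t \;\le\; g''(0)\cdot\frac{e^{St} - St - 1}{S^2},
\]
which is the claimed upper bound. The lower bound of \eqref{eq:bach-1d-first} follows identically from the left-hand exponential inequality on $g''$. The degenerate case $S = 0$ (where $g''$ is constant and the statement reduces to $g(t) - g(0) - g'(0)t = \tfrac{1}{2} g''(0) t^2$) is handled either separately or by taking the limit $S \to 0^+$ in the final estimate, using $(e^{\pm St} \mp St - 1)/S^2 \to t^2/2$.

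I do not anticipate any real obstacle: the essential content --- positivity of the second derivative together with exponential control of its oscillation --- is entirely absorbed into Proposition~\ref{prop:bach-second-derivative}, and the remaining steps are elementary integration on $[0,t]$. The only point that would merit careful verification in a full write-up is that the proposition indeed covers the one-sided interval $[0,1]$ rather than only a symmetric interval around the origin, but this is immediate from its statement, which allows arbitrary $|t| \le T$.
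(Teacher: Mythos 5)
Your proposal is correct and follows essentially the same route as the paper's own proof: both derive the two-sided exponential envelope $g''(0)e^{-St}\le g''(t)\le g''(0)e^{St}$ from the log-derivative bound $|(\log g'')'|\le S$ (you by invoking Proposition~\ref{prop:bach-second-derivative} with $c=S/\sqrt{g''(0)}$, the paper by redoing the integration inline and disposing of possible zeros of $g''$ by a contradiction argument), and then integrate twice on $[0,t]$ to reach \eqref{eq:bach-1d-first}. Delegating the positivity issue and the envelope to Proposition~\ref{prop:bach-second-derivative} is a clean factoring, but not a different argument.
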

\begin{proof}
First assume that~$g''(t) > 0$ on~$[0,1]$. Then, the premise of the lemma implies that
$
-S\ud t \le {\ud \log g''(t)} \le S\ud t
$
for $0 \le t \le 1$. 
Integrating this, we get
\begin{equation}
\label{eq:bach-1d-second}
g''(0) e^{-St} \le g''(t) \le g''(0) e^{St}.
\end{equation}
Two more integrations successively give
\[
\tfrac{1 - e^{-St}}{S} g''(0) \le g'(t) - g'(0) \le \tfrac{e^{St}-1}{S} g''(0),
\]
and then~\eqref{eq:bach-1d-first}.
Now, let~$t_0 \in (0,1]$ be the leftmost zero of~$g''(t)$. 
Then, the preceding argument can be applied on~$[0,t_0]$, yielding a contradiction due to the left inequality in~\eqref{eq:bach-1d-second}.
\end{proof}

\begin{lemma}
\label{lem:aux-1d}
Let~$g: [0,1] \to \R$ be a three times differentiable and convex function such that $g''(0) > 0$, and for some~$S \ge 0$,
\[
|g'''(t)| \le \tfrac{S}{1-t}g''(t), \quad 0 \le t < 1.
\]
Then, for any $0 \le t \le 1$, one has
\begin{equation}
\label{eq:aux-1d-first}
\begin{aligned}
g(t) - g(0) - g'(0)t &\ge \tfrac{(1-t)^{2+S} + (2+S)t -1}{(1+S)(2+S)} g''(0), \\
g(t) - g(0) - g'(0)t &\le \tfrac{(1-t)^{2-S} + (2-S)t -1}{(1-S)(2-S)} g''(0),
\end{aligned}
\end{equation}
where the upper bound holds whenever~$S < 1$ for any $t \in [0,1)$, and whenever~$S < 2$ when~$t = 1$. In particular, taking~$t = 1$, we have
\[
\tfrac{1}{2+S}{g''(0)} 
\le g(1) - g(0) - g'(0)
\le \tfrac{1}{2-S}{g''(0)}.
\]
\end{lemma}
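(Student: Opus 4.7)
The plan is to mimic the proof of the preceding Lemma~\ref{lem:bach-1d}, replacing the constant bound $S$ with the singular bound $S/(1-t)$.

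First, I would argue that without loss of generality $g''(t) > 0$ on $[0,1)$: if $g''$ vanishes at some interior point, the same leftmost-zero contradiction used at the end of Lemma~\ref{lem:bach-1d} rules this out. Under $g''(t) > 0$, the hypothesis rearranges to $|(\log g''(t))'| \le S/(1-t)$, which I would integrate over $[0,t]$ and exponentiate to obtain the envelope
\[
(1-t)^S g''(0) \;\le\; g''(t) \;\le\; (1-t)^{-S}\,g''(0),
\]
the direct analogue of~\eqref{eq:bach-1d-second}.

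Next I would integrate these two envelopes twice on $[0,t]$ to obtain the claimed bounds on $g(t) - g(0) - g'(0)t$. The lower bound follows by routine computation with $(1-s)^S$, which is integrable for every $S \ge 0$ and every $t \in [0,1]$. For the upper bound, a first integration yields $g'(t) - g'(0) \le \frac{1 - (1-t)^{1-S}}{1-S}g''(0)$ when $S \ne 1$, and a second integration gives the stated expression involving $(1-t)^{2-S}/[(1-S)(2-S)]$. The inner integrand $(1-s)^{-S}$ is integrable up to $s = 1$ only when $S < 1$, whereas the outer integrand remains integrable as long as $S < 2$; this dichotomy is precisely what accounts for the two regimes in the lemma's statement.

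The singular values $S = 1$ and $S = 2$ are then recovered by continuity in $S$, since the right-hand sides of~\eqref{eq:aux-1d-first} extend continuously across these values (the apparent $0/0$ collapses by L'H\^opital's rule, producing the $-\log(1-t)$ expressions one expects). I expect the only delicate step beyond routine calculus to be the validity of the upper bound at $t = 1$ when $S \in [1, 2)$: here the intermediate bound on $g'(t) - g'(0)$ diverges as $t \to 1$, while the integrated bound on $g(t) - g(0) - g'(0)t$ stays finite. I would justify this by monotone convergence, observing that for $S > 1$ the outer integrand $\frac{1 - (1-s)^{1-S}}{1 - S} = \frac{(1-s)^{1-S} - 1}{S - 1}$ is nonnegative and monotone increasing in $s$, so its improper integral exists and equals $\lim_{t \to 1^-}$ of the finite truncated integrals. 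Specializing the resulting formula to $t = 1$ collapses the right-hand side to $g''(0)/(2-S)$, yielding the final inequality of the lemma.
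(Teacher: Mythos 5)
Your proof follows the same route as the paper's: reduce to $g''>0$, integrate $|(\log g'')'|\le S/(1-t)$ to get the envelope $(1-t)^S g''(0)\le g''(t)\le(1-t)^{-S}g''(0)$, then integrate twice. Your monotone-convergence justification for the $t=1$, $S\in[1,2)$ case is in fact more careful than the paper, which simply notes the formal cancellation of the $(1-S)$ factor; this extra rigor is welcome but the underlying argument is the same.
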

\begin{proof}
W.l.o.g.,~we assume~$g''(t) > 0$; the general case can be treated as in Lemma~\ref{lem:bach-1d}. We follow the same steps as in Lemma~\ref{lem:bach-1d}: after the first integration,
\begin{equation}
\label{eq:aux-1d-second}
 (1-t)^S g''(0) \le g''(t) \le  (1-t)^{-S} g''(0).
\end{equation}
Integrating two more times, and assuming~$S < 1$ for the upper bound, we get
\[
\tfrac{1-(1-t)^{1+S}}{1+S} g''(0) \le g'(t) - g'(0) \le \tfrac{1-(1-t)^{1-S}}{1-S} g''(0)
\]
and then~\eqref{eq:aux-1d-first}. When~$t = 1$, the term~$(1-S)$ vanishes from the denominator of the right-hand side of~\eqref{eq:aux-1d-first}, hence in this case we can take~$S < 2$.
\end{proof}

\begin{lemma}
\label{lem:mine-1d}
Let~$g: [0,1] \to \R$ be a three times differentiable and convex function such that $g''(0) > 0$, and for some~$S \ge 0$,
\[
|g'''(t)| \le \tfrac{S}{1-St} g''(t), \quad 0 \le t < 1/S.
\]
Then, for any $0 \le t \le 1/S$, one has
\begin{equation}
\label{eq:mine-1d-first}
\left(\tfrac{t^2}{2} -\tfrac{St^3}{6}\right) g''(0)
\le g(t) - g(0) - g'(0)t 
\le \tfrac{St + (1-St) \log(1-St)}{S^2}g''(0).
\end{equation}
In particular, taking~$t = 1/S$, we have
\[
\frac{g''(0)}{3S^2} \le g(1/S) - g(0) - \frac{g'(0)}{S}
\le \frac{g''(0)}{S^2}.
\]
\end{lemma}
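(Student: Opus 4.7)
The plan is to mimic the pattern already established for Lemmas~\ref{lem:bach-1d} and~\ref{lem:aux-1d}: rewrite the third-derivative bound as a differential inequality on $\log g''(t)$, integrate three successive times, and handle the endpoint $t=1/S$ by a limiting argument. As in Lemma~\ref{lem:bach-1d}, I would first argue that it suffices to treat the case where $g''(t)>0$ on the whole interval $[0,1/S)$: if $t_0$ is the leftmost zero of $g''(t)$, the argument applied on $[0,t_0]$ together with the lower bound obtained in the first integration gives a contradiction. So from now on assume $g''(t)>0$.

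Under this assumption, the hypothesis is equivalent to
\[
\left|\frac{\ud}{\ud t}\log g''(t)\right| \le \frac{S}{1-St}, \qquad 0\le t<1/S.
\]
Integrating from $0$ to $t$, and using $\int_0^t \frac{S}{1-Su}\ud u = -\log(1-St)$, I obtain the first key inequality
\[
(1-St)\, g''(0) \;\le\; g''(t) \;\le\; \frac{g''(0)}{1-St}.
\]

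Next, integrating this twice on $[0,t]$ is routine. A single integration gives
\[
g''(0)\!\left(t-\tfrac{St^2}{2}\right) \;\le\; g'(t)-g'(0) \;\le\; -\frac{g''(0)}{S}\log(1-St),
\]
and one more integration yields the lower bound $g''(0)(t^2/2 - St^3/6)$ immediately. For the upper bound I would compute the antiderivative
\[
\int_0^t \log(1-Su)\,\ud u \;=\; -\frac{1}{S}\bigl[(1-St)\log(1-St) + St\bigr],
\]
which is the one small calculation to carry out carefully (change of variable $v=1-Su$, followed by $\int \log v\, \ud v = v\log v - v$). Multiplying by $-g''(0)/S$ produces exactly $\tfrac{g''(0)}{S^2}\bigl[St+(1-St)\log(1-St)\bigr]$, giving~\eqref{eq:mine-1d-first}.

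The endpoint case $t=1/S$ is then immediate: the lower bound evaluates to $g''(0)\bigl(\tfrac{1}{2S^2} - \tfrac{1}{6S^2}\bigr) = \tfrac{g''(0)}{3S^2}$, and the upper bound becomes $\tfrac{g''(0)}{S^2}$ upon invoking $\lim_{u\to 0^+} u\log u = 0$ so that $(1-St)\log(1-St)\to 0$. I do not anticipate any serious obstacle: the only delicate point is checking the antiderivative of $\log(1-Su)$ and verifying that the boundary term $(1-St)\log(1-St)$ vanishes at the endpoint, both of which are straightforward.
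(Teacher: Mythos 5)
Your proposal is correct and follows essentially the same route as the paper: rewrite the hypothesis as a differential inequality on $\log g''(t)$, integrate three times to obtain in turn the bounds on $g''(t)$, $g'(t)-g'(0)$, and finally $g(t)-g(0)-g'(0)t$, and then handle the endpoint $t=1/S$ by continuity of $u\log u$ at $0$. The only difference is that you spell out the antiderivative computation $\int_0^t \log(1-Su)\,\ud u$ and the vanishing-of-$g''$ reduction explicitly, whereas the paper simply refers back to the analogous argument in Lemma~\ref{lem:bach-1d}.
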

\begin{proof}
We again assume w.l.o.g. that~$g''(t) > 0$. Integrating three times, we get
\[
(1-St) g''(0) \le g''(t) \le \tfrac{1}{1-St} {g''(0)},
\]
then
\begin{equation}
\label{eq:mine-1d-second}
\left(t-\tfrac{St^2}{2}\right) g''(0) \le g'(t) - g'(0) \le \left(-\tfrac{\log(1-St)}{S}\right) g''(0),
\end{equation}
implying~\eqref{eq:mine-1d-first}. The last claim follows by continuity of~$f(u) = u \log u$ at~$0$.
\end{proof}

\paragraph{Proof of the proposition}
Case~\ref{prop:local-bach}, the first statement of Case~\ref{prop:local-mine}, and the second statement of Case~\ref{prop:local-mine} follow, correspondingly, from Lemmas~\ref{lem:bach-1d},~\ref{lem:mine-1d}, and~\ref{lem:aux-1d} applied to~$g(t) = \phi_F(t)$ and using that~$g(t) = F(\theta_t)$,~$g'(0) = \lang F'(\theta_0), \theta_1 - \theta_0 \rang$, and~$g''(0) = \|\theta_1 - \theta_0\|_{\He_0}^2$.
Note that the inner-product structure of~$S$ does not play a role here, but is used in Proposition~\ref{prop:dikin}. 
\end{proof}

The next result describes the behavior of (pseudo) self-concordant functions close to the optimum. 
Case~\ref{prop:local-bach} corresponds to~\cite[Proposition 2]{bach2010self-concordant}. 
The argument for Case~\ref{prop:local-mine} appears to be new, and is of independent interest. We note that a very similar argument was independently invented by U. Marteau-Ferey in~\cite{marteau2019beyond}. 

\begin{proposition}
\label{prop:dikin}
Suppose that one of the Cases~\ref{prop:local-bach}--\ref{prop:local-mine} in Proposition~\ref{prop:local} holds with fixed~$\theta_0$, all~$\theta_1 \in \Theta$, and~$W \in \R^d$ which can depend on~$\theta_1$.
Whenever
\[
\|W\|_{\He_0^{-1}}  \| \nabla F(\theta_0) \|_{\He_0^{-1}} \le 1/4,
\]
function~$F(\theta)$ has a unique minimizer~$\tilde \theta \in \Theta$, and
$
\|\tilde \theta - \theta_0\|_{\He_0} \le 4 \| \nabla F(\theta_0) \|_{\He_0^{-1}}.
$
\end{proposition}
The key message of Proposition~\ref{prop:dikin} is that the \textit{local} information about~$F(\cdot)$ at one point efficiently amounts to the \textit{global} information about how close is this point to the optimum. 
When applied to the \textit{empirical risk} with~$\theta_0 = \theta_*$ and~$\tilde{\theta} = \wh\theta_n$, this proposition allows to localize $\wh\theta_n$ using that the quantity~$\| \nabla L_n(\theta_*) \|_{\He^{-1}}^2$ decreases at rate~$O(\deff/n)$ under the i.i.d.~assumption.

\begin{proof}

Note that from~\eqref{eq:bach-1d-second},~\eqref{eq:aux-1d-second}, or~\eqref{eq:mine-1d-second}, depending on the case, it follows that~$\nabla^2 F(\theta) \succ 0$ for any $\theta \in \Theta$, hence the minimum~$\tilde \theta$ is unique provided that it exists. 
Now, consider the level set
\[
\Theta_F(F(\theta_0)) := \{\theta \in \Theta: F(\theta) \le F(\theta_0)\}.
\]
Let~$\theta_1 \in \Theta_F(F(\theta_0))$ be arbitrary, and~$r = \|\theta_1 - \theta_0\|_{\He_0}$. 
Denote~$\nu := \|\nabla F(\theta_0)\|_{\He_0^{-1}}$ and~$R := \|W\|_{\He_0^{-1}}$; note that $S \le Rr$.
We now treat all cases of Proposition~\ref{prop:local}. 

\paragraph{Case~\ref{prop:local-bach}.}
By~\eqref{eq:local-lower-fake}, we have
\[
\begin{aligned}
F(\theta_1) 
&\ge F(\theta_0) + \langle \nabla F(\theta_0), \theta_1 - \theta_0 \rangle + \tfrac{e^{-Rr} - 1 + Rr}{R^2r^2} r^2 
\ge F(\theta_0) - \nu r + \tfrac{e^{-Rr} -1 + Rr}{R^2},
\end{aligned}
\]
where we first used that~$u \mapsto (e^{-u} - 1 + u)/u$ is a decreasing function, and then the Cauchy-Schwarz inequality.
Denoting $u = Rr$, we arrive at
\begin{equation}
\label{eq:bach-decrement-equation}
e^{-u} -1 + u \le \nu R u.
\end{equation}
By the premise, we know that~$\nu R \le 1/2$, hence
$
e^{-u} - 1 + {u}/{2} \le 0.
$
We can check numerically that this implies~$u \le 2$; moreover, one has
$
e^{-u} - 1 + u \ge {u^2}/{4}
$
for such~$u$.
Plugging this back into~\eqref{eq:bach-decrement-equation}, we arrive at~$u \le 4 \nu R$, that is,~$\|\theta_1 - \theta_0\|_{\He_0} \le 4\nu$. In other words, the level set~$\Theta_F(F(\theta_0))$ is compact and belongs to the $\|\cdot\|_{\He_0}$-ball of radius~$4\nu$ centered at~$\theta_0$. Hence, the minimum~$\tilde\theta$ exists and belongs to the same ball; it is also unique since~$F(\theta) \succ 0$.

\paragraph{Case~\ref{prop:local-mine} with~$S < 1$.}
By the lower bound in~\eqref{eq:local-aux}, we have
\[
\begin{aligned}
F(\theta_1) 
&\ge F(\theta_0) + \langle \nabla F(\theta_0), \theta_1 - \theta_0 \rangle + \tfrac{1}{2+Rr} r^2 
\ge F(\theta_0) - \nu r + \tfrac{1}{2+Rr} r^2,
\end{aligned}
\]
where we used that $u \mapsto 1/(2+u)$ is a decreasing function on~$\R^+$. Whence, 
\[
\frac{u}{u+2} \le \nu R,
\]
where~$u := Rr$. 
Since~$\nu R \le 1/2$, we have $u \le 2$. Thus, we get~$r \le 4\nu$ as required. 

\paragraph{Case~\ref{prop:local-mine} with arbitrary~$S \ge 0$.} 
First assume that~$Rr \ge S \ge 1$. Then,~$\theta_{1/S}$ belongs to the segment~$[\theta_0,\theta_1]$ and to~$\Theta$.
Whence~$F(\theta_{1/S}) \le F(\theta_0)$ by convexity of~$\Theta_F(F(\theta_0))$.
On the other hand, from the lower bound in~\eqref{eq:local-mine} we have
\[
F(\theta_{1/S}) \ge F(\theta_0) - \frac{\nu r}{S} + \frac{r^2}{3S^2}.
\]
Whence
$
\nu \ge \frac{r}{3S} \ge \frac{1}{3R},
$
and we arrive at the contradiction. Thus, the only possibility is that~$S < 1$, in which case the statement has already been proved. 
\end{proof}

\subsection{Properties of pseudo-Huber loss~\eqref{def:our-robust-loss}}

We can check that the Fenchel dual of~$\phi: (-1,1) \to \R$ defined in~\eqref{eq:log-barrier} is indeed~$\vphi(t)$, cf.~\eqref{def:our-robust-loss}, by solving a quadratic equation. 
Since~$\phi$ is a barrier on~$(-1,1)$, we have~$|\vphi'(t)| < 1$ for any~$t \in \R$. 
Now, we have~$\phi'(\vphi'(t)) = t$ for~$t \in \R$, see, e.g.,~\cite{rockafellar1970convex}. Differentiating this identity, we obtain
\begin{equation}
\label{eq:fenchel-second-derivative}
\phi''(\vphi'(t)) \cdot \vphi''(t) = 1.
\end{equation}
Clearly, the Fenchel dual of an even function is also even, hence~$\vphi'(0) = 0$, and~$\vphi''(0) = 1/\phi''(0)$. 
Differentiating once again, we get
\[
\phi'''(\vphi'(t)) \cdot [\vphi''(t)]^2 + \phi''(\vphi'(t)) \cdot \vphi'''(t) = 0,
\]
whence, using that~$\phi''(u) > 0$ for any $u \in (-1,1)$,
\[
|\vphi'''(t)| = \frac{|\phi'''(\vphi'(t))|}{\phi''(\vphi'(t))} [\vphi''(t)]^2.
\]
Whence, if~$|\phi'''(u)| \le c[\phi''(u)]^{{3}/{2}}$, we get that~$|\phi'''(u)| \le c[\phi''(u)]^{{3}/{2}}$ via~\eqref{eq:fenchel-second-derivative}.
\qed
\section{Proofs of theorems}
\label{sec:extra-proofs}

\subsection{Proof of Theorem~\ref{th:crb-fake-complex}}

$\boldsymbol{1^o}$.
Recall that~$\He = \nabla^2 L(\theta_*)$, and let~$\He_n := \nabla^2 L_n(\theta_*)$. 
Note that due to Assumption~\ref{ass:second-subg} and the first bound on~$n$ in the premise of the theorem, we can apply Theorem~\ref{th:covariance-subgaussian-simple} to~$\He_n$ and~$\He$. 
Thus, with probability at least~$1-\delta$ we have
\begin{equation}
\label{eq:hess-estimated}
\tfrac{1}{2} \He \prccq \He_n \prccq 2\He.
\end{equation}
On the other hand, we can prove~\eqref{eq:score-finite-complex} using Assumption~\ref{ass:first-subg}. 
Indeed, the vectors
\[
\nabla \ell_{Z_i}(\theta_*) = \ell'(Y_i,X_i^\T\theta_*) X_i, \quad i \in [n],
\] 
are independent, zero mean and with covariance~$\Gr$.
Hence, the random vectors~$\Gr^{-1/2} \nabla \ell_{Z_i}(\theta_*)$.~$i \in [n]$, are independent and isotropic (have zero mean and unit covariance). 
Moreover, by Assumption~\ref{ass:first-subg},~$\| \Gr^{-1/2} \nabla \ell_{Z_i}(\theta_*) \|_{\psi_2} \le K_1$.
Hence, by Lemma~\ref{lem:subg-sum} about the subgaussian norm of the sum of i.i.d.~random vectors, we have that the random vector 
$
V_n := \sqrt{n}\Gr^{-1/2}\nabla L_n(\theta_*),
$
is isotropic, satisfies~$\|V_n\|_{\psi_2} \lsim K_1$, and, moreover,
\begin{equation}
\label{eq:score-quad-form}
\| \nabla L_n(\theta_*) \|_{\He^{-1}}^2 = \tfrac{1}{n}{\|V_n\|_{\Ma}^2} \;\; \text{with} \;\; \Ma := \Gr^{1/2} \He^{-1} \Gr^{1/2}.
\end{equation}
Using that~$\|\Ma\|_\infty \le \|\Ma\|_2 \le \tr(\Ma) = \deff$, by Theorem~\ref{th:subg-quad}, we arrive at~\eqref{eq:score-finite-complex}.

$\boldsymbol{2^o}$.
Our next goal is proving~\eqref{eq:crb-finite-complex}.
Let~$\mu := \E[X]$ and~$\Cov_o := \E[(X-\mu)(X-\mu)^\T]$ so that~$\Cov = \Cov_o + \mu \mu^\T$. 
Denoting~$\Qu = \Cov_o^{1/2} \Cov^{-1} \Cov_o^{1/2}$, we have
\begin{equation}
\label{eq:Q-decomposition}
\begin{aligned}
&\|X_i\|_{\Cov^{-1}}^2 
= \|X_i-\mu\|_{\Cov^{-1}}^2 + 2 \langle \Cov^{-{1}/{2}} \mu, \Cov^{-{1}/{2}}(X_i - \mu)  \rangle + \|\mu\|_{\Cov^{-1}}^2\\
&= \|\Cov_o^{-{1}/{2}}(X_i-\mu)\|_{\Qu}^2 + 2 \langle \Qu^{{1}/{2}} \Cov_o^{-{1}/{2}} \mu, \Qu^{{1}/{2}}\Cov_o^{-{1}/{2}}(X_i - \mu)  \rangle 
 + \|\Cov^{-{1}/{2}}\mu\|_{2}^2.
\end{aligned}
\end{equation}
By construction,~$\Cov_o^{-1/2} (X_i - \mu)$ is isotropic. Moreover,~$\|\Cov_o^{-1/2} (X_i-\mu) \|_{\psi_2} \lsim K_0$ due to Assumption~\ref{ass:design-subg} and Lemma~\ref{lem:affine-comparison}.
Note that~$\|\Qu\|_2 \le \tr(\Qu) \le d$ and~$\|\Qu\|_\infty \le 1$. 
Hence, by Theorem~\ref{th:subg-quad}, with probability at least~$1-\delta$ one has
\[
\begin{aligned}
\|\Cov_o^{-1/2}(X_i-\mu)\|_{\Qu}^2 
&\lesssim K_0^2 d \big[\sqrt{\log\left({e}/{\delta}\right)} + \log\left({1}/{\delta}\right)\big]
\lsim K_0^2 d\log\left({e}/{\delta}\right).
\end{aligned}
\]
Now, the second term in the right-hand side of~\eqref{eq:Q-decomposition} can be controlled as follows: 
\[
\begin{aligned}
|\lang \Qu^{1/2} \Cov_o^{-1/2} \mu, \Qu^{1/2} \Cov_o^{-1/2}(X_i - \mu)  \rang| 
&\le \|\Qu\|_{\infty}^{1/2}   \|\Qu^{1/2} \Cov_o^{-1/2}\mu \|_{2}  \| \Cov_o^{-1/2}(X_i - \mu) \|_2 \notag \\
&= \|\Qu\|_{\infty}^{1/2}  \|\Cov^{-1/2}\mu \|_{2}  \| \Cov_o^{-1/2}(X_i - \mu) \|_2 \notag \\
&\le \|\Cov^{-1/2}\mu \|_{2}  \| \Cov_o^{-1/2}(X_i - \mu) \|_2 \notag \\
&\lsim K_0 \sqrt{d\log\left({e}/{\delta}\right)} \|\Cov^{-1/2}\mu \|_{2},
\end{aligned}
\]
where the last inequality holds with probability~$\ge 1-\delta$ by Corollary~\ref{cor:subg-norm}.
Finally, 
\[
\|\Cov^{-1/2}\mu\|_{2}^2 \le \mu^\T \Cov^{-1}\mu = \mu^\T (\Cov_o + \mu\mu^\T)^{-1}\mu \le 1.
\]
Combining these results with the union bound,~\eqref{eq:hess-estimated}, and Assumption~\ref{ass:curvature}, we have
\begin{equation}
\label{eq:design-bound-proof}
\max_{i \in [n]} \|X_i\|_{\He_n^{-1}}^2 \lsim \rho K_0^2 d \log\left({en}/{\delta}\right), \quad  \forall i \in [n]
\end{equation}
with probability~$\ge 1-\delta$.
Now,~\eqref{eq:score-finite-complex},~\eqref{eq:hess-estimated}, and the 2nd bound in~\eqref{eq:n-fake-complex} imply that
\begin{equation}
\label{eq:dikin-fake}
\max_{i \in [n]} \|X_i\|_{\He_n^{-1}}^2 \| \nabla L_n(\theta_*) \|_{\He_n^{-1}}^2 \le c.
\end{equation}
Now, putting~$c = 1/4$, this results in~\eqref{eq:crb-finite-complex}. 
Indeed, invoking the bound~\eqref{eq:line-gsc-fake-n} of~Proposition~\ref{prop:line-gsc}, we see that~$L_n(\cdot) $ falls into Case~\ref{prop:local-bach} of Proposition~\ref{prop:local} with~$\theta_0 = \theta_*,$~$\He_0 = \He_n$, and~$W(\theta) = X_{j(\theta)}$ for $j(\theta) \in \Argmax_{i \in [n]} |\lang X_i, \theta - \theta_* \rang|.$ 
Hence, we can apply Proposition~\ref{prop:dikin}: clearly,~$\|W(\theta)\|_{\He_n^{-1}} \le \max_{i \in [n]} \|X_i\|_{\He_n^{-1}}$ for all~$\theta \in \Theta$, and then~\eqref{eq:dikin-fake} with~$c = 1/4$ implies that the minimizer~$\wh\theta_n$ of~$\wh L_n(\cdot)$ is unique and satisfies~$\|\wh \theta - \theta_*\|_{\He_n}^2 \le 4 \| \nabla L_n(\theta_*) \|_{\He_n^{-1}}^2$.
By~\eqref{eq:hess-estimated}, this results in~\eqref{eq:crb-finite-complex}.

$\boldsymbol{3^o}$.
Let us now prove~\eqref{eq:restricted-risk-bound-neat}.
To this end, consider the restricted risk $L_{\cE_0}(\theta)$, fix two arbitrary points $\theta_0, \theta_1 \in \Theta$, and consider function $\phi_{\cE_0}(t) := L_{\cE_0}(\theta_t)$ where~$\theta_t = \theta_0 + t(\theta_1-\theta_0)$ for~$t \in [0,1]$. Differentiating~$\phi_{\cE_0}(t)$ three times (note that $\cE_0$ does not depend on $\theta$), we see that~\eqref{eq:line-gsc-fake-ave} can now be replaced with
\[
|\phi_{\cE_0}'''(t)| \le \phi_{\cE_0}''(t) \sup_{x \in \X_{\cE_0}} |\langle x, \theta_1-\theta_0 \rangle|,
\]
where
$
\cX_{\cE_0} := \{x \in \cX: \| x \|_{\He^{-1}} \le \sqrt{\rho} \B_0\},
$
with~$\B_0 := K_0 \sqrt{d \log(e/\delta)}$, is the~$(1-\delta)$-confidence set (under~$\cE_0$) for~$X$. 
(We used Assumption~\ref{ass:curvature}.)
Besides, let us momentarily assume that the new Hessian~$\He_{\cE_0} := \nabla^2 L_{\cE_0}(\theta_*)$ is invertible, and approximates~$\He$ in the positive-semidefinite sense: for some constants~$c,C > 0$,
\begin{equation}
\label{eq:hessian-equivalence-after-restriction}
c\He \prccq \He_{\cE_0} \prccq C\He.
\end{equation}
Later on, we will verify this under condition~\eqref{eq:restricted-risk-condition} on~$\delta$.
Now, under~\eqref{eq:hessian-equivalence-after-restriction}, we can apply Case~\ref{prop:local-bach} of Proposition~\ref{prop:local} to~$L_\cE(\cdot)$ with~$\theta_0 = \theta_*$,~$\theta_1 = \wh \theta_n$,~$\He_0 = \He_{\cE_0}$, and~$W = W(\theta) \in \Argmax_{x \in \cX_{\cE_0}} |\lang x, \theta - \theta_* \rang|$.
Observe that~$\|W(\theta)\|_{\He^{-1}} \le \sqrt{\rho} \B_0$, and let~$r := \|\wh\theta_n-\theta_*\|_{\He}^2$. 
By~\eqref{eq:local-upper-fake} combined with the Cauchy-Schwarz inequality,
%
\begin{equation*}
\begin{aligned}
L_{\cE_0}(\wh\theta_n) - L_{\cE_0}(\theta_*) 
&\lsim \left(\frac{e^{\sqrt{\rho} \B_0 r} - 1 - \sqrt{\rho} \B_0 r}{\rho \B_0^2 r^2}\right) r^2 + \nabla L_{\cE_0}(\theta_*)^\T (\wh\theta_n-\theta_*).
\end{aligned}
\end{equation*}
Now, observe that the term in the parentheses is at most a constant. 
Indeed,~$\sqrt{\rho}\B_0 r \lesssim 1$ follows from the combination of~\eqref{eq:n-fake-complex}--\eqref{eq:crb-finite-complex}, and then~$f(u) = e^{u}-1-u \lesssim u^2$ whenever~$u \lesssim 1$ (in particular,~$f(u) \le u^2$ when~$u \le 1$). 
Thus, 
\begin{equation}
\label{eq:restricted-risk-ugly}
L_{\cE_0}(\wh\theta_n) - L_{\cE_0}(\theta_*) 
\lsim r^2 + r\| \nabla L_{\cE_0}(\theta_*) \|_{\He^{-1}}.
\end{equation}
In order to prove~\eqref{eq:restricted-risk-bound-neat}, it remains to control~$\| \nabla L_{\cE_0}(\theta_*) \|_{\He^{-1}}$ and to verify~\eqref{eq:hessian-equivalence-after-restriction}. 

$\boldsymbol{4^o}$.
To estimate the additional term in~\eqref{eq:restricted-risk-ugly},
consider the \textit{complementary risk:}
\[
L_\cEc(\theta_*) := \E[\ell_Z(\theta_*) \ind_{\cEc}(X)],
\] 
where $\cEc$ is the complement of $\cE_0$, so that~$\Prob(\cEc) \le \delta$. 
Note that, since~$\nabla L(\theta_*) = 0$, we have~$\nabla L_{\cE_0}(\theta_*) = -\nabla L_\cEc(\theta_*),$ 
whence 
\[
\| \nabla L_{\cE_0}(\theta_*) \|_{\He^{-1}} = \| \nabla L_\cEc(\theta_*) \|_{\He^{-1}}.
\]
We now estimate $\| \nabla L_\cEc(\theta_*) \|_{\He^{-1}}$ through a technique inspired by the one in~\cite[Section~1.3]{vershynin2011approximating}.
For any~$p$,~$q$ such that~$1/p + 1/q = 1$, we have by H\"older's inequality:
\begin{align}
\label{eq:restricted-risk-young}
\| \nabla L_\cEc(\theta_*) \|_{\He^{-1}} 
\le  \E [ \| \nabla \ell_Z(\theta_*) \|_{\He^{-1}} \ind_{\cEc}]
&\le \E [\| \nabla \ell_Z(\theta_*) \|_{\He^{-1}}^p] ^{1/p}  \delta^{1/q},
\end{align}
Note that 
\[ 
\| \nabla \ell_Z(\theta_*) \|_{\He^{-1}}^2 = \|\Gr^{-1/2} \nabla \ell_Z(\theta_*)\|_{\Ma}^2
\] 
where $\Ma = \Gr^{1/2} \He^{-1} \Gr^{1/2}$, and~$\Gr^{-1/2} \nabla \ell_Z(\theta_*)$ is isotropic and satisfies
\[
\|\Gr^{-1/2} \nabla \ell_Z(\theta_*) \|_{\psi_2} \le K_1.
\] 
Hence, by Corollary~\ref{cor:subg-sqrt},~$\zeta := \| \He^{-1/2} \nabla \ell_Z(\theta_*) \|$ satisfies~$\|\zeta\|_{\psi_2} \lesssim K_1 \sqrt{\deff}.$
As such, we can bound the moments of $\zeta$ using Lemma~\ref{lem:subg-properties}:
\[
\E [\| \nabla \ell_Z(\theta_*) \|_{\He^{-1}}^p]^{1/p} \lsim K_1 \sqrt{p\deff}.
\]
Combining this with~\eqref{eq:restricted-risk-ugly}--\eqref{eq:restricted-risk-young} and~\eqref{eq:score-finite-complex}--\eqref{eq:crb-finite-complex}, and choosing~$p = \log(e\deff)$ and~$q = 1+{1}/{\log(\deff)}$, we obtain 
\[
\begin{aligned}
&L_{\cE_0}(\wh\theta_n) - L_{\cE_0}(\theta_*) 
\lesssim  K_1^2 \sqrt{\tfrac{\deff \log\left({e}/{\delta}\right)}{n}  } \bigg( \sqrt{\tfrac{\deff \log\left({e}/{\delta}\right)}{n}} + \delta^{\frac{\log(\deff)}{\log(\deff)+1}} \sqrt{\deff \log(e\deff)} \bigg).
\end{aligned}
\]
Finally,~\eqref{eq:restricted-risk-condition} implies that
$
\delta^{\frac{\log(\deff)}{\log(\deff)+1}} \sqrt{\log(\deff)} \lesssim \sqrt{{\log(e/\delta)}/{n}},
$
and~\eqref{eq:restricted-risk-bound-neat} follows.

$\boldsymbol{5^o}$.
It remains to verify~\eqref{eq:hessian-equivalence-after-restriction}, i.e., that the Hessians~$\He$ and~$\He_{\cE_0}$ are close. 
First, the upper bound in~\eqref{eq:hessian-equivalence-after-restriction} is trivial. Indeed defining the complementary~$\He_{\cEc} := \nabla^2 L_{\cEc}(\theta_*)$, we see that 
$
\He_{\cE_0} = \He - \He_{\cEc} \prccq \He
$
since~$\He_{\cEc} \succq 0$. On the other hand, the lower bound in~\eqref{eq:hessian-equivalence-after-restriction} with~$c \in (0,1)$ would follow from the bound
\[
\|\He^{-1/2} \He_{\cEc} \He^{-1/2}\|_{\infty} \le c',
\]
where $c' \in (0,1)$.
Let us show that this bound is satisfied under the second bound in~\eqref{eq:restricted-risk-condition}, using a technique similar to the one used to control~$\nabla L_{\cE_0}(\theta_*)$.
For any $p,q \ge 1$~such that~$1/p + 1/q = 1$, we have by H\"older's and Young's inequalities:
\[
\begin{aligned}
\|\He^{-1/2} \He_{\cEc} \He^{-1/2}\|_{\infty} 
&\le \E[\|\He^{-1/2} \nabla^2 \ell_{Z}(\theta_*)\He^{-1/2}\|_{\infty}^p]^{1/p} \delta^{1/q} \\
&= \E[\|\He^{-1/2} \csX \csX^\T \He^{-1/2}\|_{\infty}^p]^{1/p} \delta^{1/q} \\
&= \E[\|\He^{-1/2} \csX\|_{2}^{2p}]^{1/p} \delta^{1/q}
\lsim K_2^2 p d \delta^{1/q},
\end{aligned}
\]
where in the end we used that~$\zeta = \|\He^{-1/2} \csX\|_{2}$ satisfies~$\|\zeta\|_{\psi_2} \le K_2\sqrt{d}$ by Corollary~\ref{cor:subg-sqrt}. Choosing~$p = \log(ed)$, we see that~$K_2^2 p d \delta^{1/q} \lsim 1$ under~\eqref{eq:restricted-risk-condition}. 
\qed

\subsection{Proof of Theorem~\ref{th:crb-mine-complex}}

\paragraph{Disclaimer.}
The key distinction from Theorem~\ref{th:crb-fake-complex} is the absence of curvature parameter~$\rho$ in the derived critical sample size (cf.~\eqref{eq:n-mine-complex} viz.~\eqref{eq:n-fake-complex}). 
This improvement is achieved by carefully exploiting Assumption~\ref{ass:gsc-true}. 
In particular, we invoke Case~\ref{prop:local-mine}, instead of Case~\ref{prop:local-bach}, in Propositions~\ref{prop:line-gsc} and~\ref{prop:local}. 
Meanwhile, the role of the bounding vector~$W$ is now relegated from~$X$ to~$\wt X = \ell''(Y, X^\top\theta_*)^{1/2}X$. 

The proof of the theorem below recycles results from the proof of Theorem~\ref{th:crb-fake-complex}.

\begin{proof}

We repeat step~$\boldsymbol{1^o}$ in the previous proof {\em verbatim}, arriving at~\eqref{eq:score-finite-complex} and~\eqref{eq:hess-estimated}. 

$\boldsymbol{2^o}$.
In order to prove~\eqref{eq:crb-finite-complex}, we use Case~\ref{prop:line-gsc-mine} of Proposition~\ref{prop:line-gsc}. 
To this end, fix arbitrary~$\theta \in \Theta$, let~$\theta_t = \theta_* + t(\theta - \theta_*)$ for~$t \in [0,1)$, and define~$\phi_z(t) := \ell_z(\theta_t)$ for arbitrary~$z \in \cZ$.
Due to Assumption~\ref{ass:gsc-true}, for any~$z \in \cZ = \R^d \times \cY$, we have~$|\phi_z'''(t)| \le 2[\phi_z''(t)]^{3/2}.$
Hence, we can apply Proposition~\ref{prop:nesterov-second-derivative} to~$g(t) = \phi_z''(t)$ with~$c = 1$.
Thus, with~$\wt  x := [\ell''(y, x^\T\theta_*)]^{1/2}x$ for arbitrary~$(x,y) \in \cZ$, we have
\begin{equation}
\label{eq:individual-loss}
\begin{aligned}
\phi_{z}''(t) 
&\le \frac{\phi''_z(0)}{(1-t\sqrt{\phi''_z(0)})^2}
= \frac{\lang \wt  x, \theta-\theta_* \rang^2}{(1-t |\lang \wt x, \theta-\theta_* \rang|)^2}
\end{aligned}
\end{equation}
for any~$t \ge 0$ such that the denominator is non-zero.
Combining this with~\eqref{eq:line-gsc-mine-n},
\begin{equation}
\label{eq:curvature-extracted-n}
\begin{aligned}
|\phi'''_n(t)| 
&\le \phi''_n(t) \max_{i \in [n]} \frac{|\lang \wt X_i, \theta-\theta_* \rang|}{1-t |\lang \wt X_i, \theta-\theta_* \rang|} 
= \phi''_n(t) \frac{|\lang \wt X_{j(\theta)}, \theta-\theta_* \rang|}{1-t |\lang \wt X_{j(\theta)}, \theta-\theta_* \rang|},
\end{aligned}
\end{equation}
where~$j(\theta) \in \Argmax_{i \in [n]} |\lang \wt X_i, \theta-\theta_* \rang|$, and again we can take any~$t \ge 0$ such that the denominator is positive.  
Thus,~$L_n(\theta)$ falls into Case~\ref{prop:local-mine} of Proposition~\ref{prop:local} with $\theta_0 = \theta_*$,~$\He_0 = \He_n$, and~$W = W(\theta) = \wt X_{j(\theta)}$.
On the other hand, repeating the analysis that led to~\eqref{eq:design-bound-proof}, we obtain that, for any fixed~$\theta$,
\[
\|\wt X_{j(\theta)}\|_{\He_n^{-1}}^2 \lsim \B_2^2 := K_2^2 d \log\left({en}/{\delta}\right)
\]
with probability~$\ge 1-\delta$. Combining this result with the second bound in~\eqref{eq:n-mine-complex}, 
\begin{equation}
\label{eq:dikin-mine}
\|\wt X_{j(\theta)}\|_{\He_n^{-1}}^2 \| \nabla L_n(\theta_*) \|_{\He_n^{-1}}^2 \lsim 1,
\end{equation}
cf.~\eqref{eq:dikin-fake}. 
Hence, we can apply Proposition~\ref{prop:dikin} to~$L_n(\theta)$ at~$\theta_0 = \theta_*$, and repeating the final argument in step~$\boldsymbol{2^o}$ of the proof of Theorem~\ref{th:crb-fake-complex}, we arrive at~\eqref{eq:crb-finite-complex}.

$\boldsymbol{3^o}$.
We now prove~\eqref{eq:restricted-risk-bound-neat} with~$L_{\cE_0}$ replaced by~$L_{\cE_2}$. 
Similarly to~\eqref{eq:curvature-extracted-n}, from~\eqref{eq:individual-loss} and~\eqref{eq:line-gsc-mine-ave} we have
\begin{equation}
\label{eq:curvature-extracted-ave}
\begin{aligned}
|\phi'''(t)| 
&\le \phi''(t) \frac{|\lang W(\theta), \theta-\theta_* \rang|}{1-t |\lang W(\theta), \theta-\theta_* \rang|},
\end{aligned}
\end{equation}
with probability~$\ge 1-\delta$, where~$W(\theta) \in \Argmax_{x \in \wt\cX_{\cE_2}} |\lang x, \theta-\theta_* \rang|$ for the set
\[
\wt\cX_{\cE_2} := \{\wt x =  [\ell''(y, x^\T\theta_*)]^{1/2}x : \; \| \wt x \|_{\He^{-1}}^2 \lsim \B_2^2\}.
\]
(Clearly,~$\wt\cX_{\cE_2}$ is the~$(1-\delta)$-confidence set for the new observation~$\wt X$.)
Thus, 
\[
|\lang W(\theta), \wh \theta_n - \theta_* \rang| \le \B_2 r, \quad r := \|\wh\theta_n - \theta_*\|_{\He};
\]
moreover, due to~\eqref{eq:score-finite-complex},~\eqref{eq:crb-finite-complex}, and the 2nd bound in~\eqref{eq:n-mine-complex} we have~$\B_2 r \lsim 1$.
As such, whenever
$
c\He \prccq \nabla^2 L_{\cE_2}(\theta_*) \prccq C\He,
$
the restricted risk~$L_{\cE_2}(\cdot)$, cf.~\eqref{eq:restricted-risks}, falls under Case~\ref{prop:local-mine} of Proposition~\ref{prop:local} with~$\theta_1 = \wh\theta_n$ and~$S < 1$; 
the upper bound in~\eqref{eq:local-aux} then gives the analogue of~\eqref{eq:restricted-risk-ugly}:
\begin{equation*}
\begin{aligned}
L_{\cE_2}(\wh\theta_n) - L_{\cE_2}(\theta_*) 
&\lsim \frac{r^2}{2-\B_2 r} + \nabla L_{\cE_2}(\theta_*)^\T (\wh\theta_n-\theta_*) 
\lsim r^2 + r \|\nabla L_{\cE_2}(\theta_*) \|_{\He^{-1}}.
\end{aligned}
\end{equation*}
It remains to estimate the right-hand side and to verify~$c\He \prccq \nabla^2 L_{\cE_2}(\theta_*) \prccq C\He$, using~\eqref{eq:restricted-risk-condition} in both cases. This repeats steps~$\boldsymbol{4^o}$--$\boldsymbol{5^o}$ in the proof of Theorem~\ref{th:crb-fake-complex}.
\end{proof}


\subsection{Proof of Theorem~\ref{th:crb-mine-improved}}

$\boldsymbol{1^o}$.
Without loss of generality, we assume that~$\Theta = \R^d$; the argument can be extended to the general case simply by replacing all arising Dikin ellipsoids with their intersections with~$\Theta$.
For simplicity, we also assume that Assumption~\ref{ass:second-subg++} holds with~$r = 1$, and denote~$\bar K_2 := \bar K_2(1)$.
First of all, for any $r \ge 0$ and $\theta \in \Theta_1(\theta_*)$, we define the Dikin ellipsoid with center~$\theta$ and radius~$r$:
\[
\Theta_r(\theta) := \{\theta' \in \R^d: \|\theta' - \theta\|_{\He(\theta)} \le r\}.
\]
We will prove that the Hessians~$\He(\theta) := \nabla^2 L(\theta)$ are close to~$\He(\theta_*)$ within the Dikin ellipsoid with radius~$\Omega(1/\bar K_2^3)$.
To this end, fix~$\theta_0 = \theta_*$ and arbitrary~$\theta_1 \in \R^d$, and let~$\theta_t = \theta_0 + t(\theta_1-\theta_0)$,~$t \ge 0$.
By using Assumptions~\ref{ass:gsc-true} and~\ref{ass:second-subg++}, we can prove that for the function~$\phi(t) = L(\theta_t)$ it holds
\[
\phi'''(t) \le 2\bar c [\phi''(t)]^{3/2}
\]
for any~$t \ge 0$ such that~$\theta_t \in \Theta_{1/\bar c}(\theta_*)$ with~$\bar c \gsim 1/\bar K_2^3$.
Indeed, let~$\Delta := \theta_1 - \theta_0$, and recall that
\[
\phi^{(p)}(t) = \E[\ell^{(p)}(Y, \langle X, \theta_t\rangle)  \langle X, \Delta \rangle^p], \quad p \in \{2,3\},
\] 
cf. the proof of Proposition~\ref{prop:line-gsc}. Putting~$\wt X(\theta_t) := [\ell''(Y, \langle X, \theta_t\rangle)]^{1/2}X$, this gives
\[
\begin{aligned}
\phi''(t) 
&= \E[\langle \wt X(\theta_t), \Delta \rangle^2]  
= \E[\langle \He(\theta_t)^{-1/2} \wt X(\theta_t), \He(\theta_t)^{1/2} \Delta \rangle^2] = \|\Delta\|_{\He(\theta_t)}^2,
\end{aligned}
\]
On the other hand, due to Assumption~\ref{ass:gsc-true},
\[
\begin{aligned}
|\phi'''(t)| 
&\le \E[|\ell'''(Y, \langle X, \theta_t\rangle)| \cdot |\langle X, \Delta \rangle|^3] \\
&\le 2\E[|\langle [\ell''(Y, \langle X, \theta_t\rangle)]^{1/2} X, \Delta \rangle|^3] \\
&= 2\E[|\langle \He(\theta_t)^{-1/2} \wt X(\theta_t), \He(\theta_t)^{1/2} \Delta \rangle|^3].
\end{aligned}
\]
Now, recall that whenever $\theta \in \Theta_c(\theta_*)$, one has~$\|\He(\theta_t)^{-1/2} \wt X(\theta_t)\|_{\psi_2} \le \bar K_2$ due to Assumption~\ref{ass:second-subg++}.
Thus, for such~$\theta_t$ we have
\[
\|\langle \He(\theta_t)^{-1/2} \wt X(\theta_t), \He(\theta_t)^{1/2} \Delta \rangle\|_{\psi_2} \le \bar K_2 \|\Delta\|_{\He(\theta_t)},
\] 
and by Lemma~\ref{lem:subg-properties},
\[
\E[|\langle \He(\theta_t)^{-1/2} \wt X(\theta_t), \He(\theta_t)^{1/2} \Delta \rangle|^3] \le C\bar K_2^3 \|\Delta\|_{\He(\theta_t)}^3
\]
for some absolute constant~$C > 0$. 
Without the loss of generality we can assume that~$C \ge 1$. 
Combining the above inequalities, we observe that
\[
|\phi'''(t)| \le 2C\bar K_2^3 [\phi''(t)]^{3/2}, \quad 0 \le t[\phi''(0)]^{1/2} \le 1,
\]
where we used that~$\theta_t \in \Theta_1(\theta_*)$ is equivalent to~$t^2 \phi''(0) \le 1$.
We can now apply Proposition~\ref{prop:nesterov-second-derivative} to~$g(t) = \phi''(t)$, putting~
\[
\bar c := C\bar K_2^3 \gsim 1,
\] 
and arriving at
\begin{align*}
\frac{\phi''(0)}{(1+\bar ct\sqrt{\phi''(0)})^2} \le \phi''(t) \le \frac{\phi''(0)}{(1-\bar ct\sqrt{\phi''(0)})^2}
\end{align*}
whenever~$0 \le \bar  ct[\phi''(0)]^{1/2} \le {1}$.
Finally, since~$\phi''(t) = \|\Delta\|_{\He(\theta_t)}^2$, this results in
\begin{align*}
\frac{\He(\theta_*)}{(1 + \bar c \|\theta - \theta_*\|_{\He(\theta_*)})^2} \prccq \He(\theta) \prccq \frac{\He(\theta_*)}{(1- \bar c \|\theta - \theta_*\|_{\He(\theta_*)})^2},
\end{align*}
whenever~$\theta \in \Theta_{1/\bar c}(\theta_*)$.
In particular, for any~$\theta \in \Theta_{1/{(2\bar c)}}(\theta_*)$ we have
\begin{alignat}{3}
\tfrac{4}{9}\He(\theta_*) \prccq \He(\theta) \prccq 4\He(\theta_*).
\label{eq:hess-ave-within-half-dikin}
\end{alignat}

$\boldsymbol{2^o}$.
Next, we derive a similar approximation result for the Hessian of \textit{empirical} risk~$\He_n(\theta) := \nabla^2 L_n(\theta)$. 
This can be done by constructing an epsilon-net on $\Theta_{1/{(2\bar c)}}(\theta_*)$ with respect to the~$\|\cdot\|_{\He(\theta_*)}$-norm. 
Then, one can control the uniform deviations of $\He_n(\theta)$ from~$\He(\theta)$ for $\theta$ on the net, while approximating~$\He_n(\theta)$ for~$\theta$ outside the net, by exploiting the self-concordance of the \textit{individual} losses, and appropriately choosing the net resolution.
To this end, recall that~$\He_n(\theta)$ writes
\[
\He_n(\theta) = \frac{1}{n} \sum_{i = 1}^n \ell''(Y_i, X_i^\T\theta) X_i X_i^\T.
\]
Hence, we can relate~$\He_n(\theta)$ to~$\He_n(\theta')$ at some other point~$\theta'$ by relating~$\ell''(Y_i, X_i^\T\theta)$ to~$\ell''(Y_i, X_i^\T\theta')$.
Namely, fix arbitrary~$\theta_0 \in \Theta_{1/{(2\bar c)}}(\theta_*)$ and~$\theta_1 \in \Theta$,
and observe that, by Assumption~\ref{ass:gsc-true},~$\phi_Z(t) = \ell_Z(\theta_t)$ satisfies 
\[
|\phi_Z'''(t)| \le 2[\phi_Z''(t)]^{3/2},
\]
hence we can apply Proposition~\ref{prop:nesterov-second-derivative} to~$\phi_Z''(t)$.
For~$0 \le t[\phi_Z''(0)]^{1/2} \le 1$ this gives
\[
\frac{\phi''_Z(0)}{(1+t[\phi''_Z(0)]^{1/2})^2} \le \phi_Z''(t) \le \frac{\phi''_Z(0)}{(1-t[\phi''_Z(0)]^{1/2})^2}.
\]
cf.~\eqref{eq:individual-loss}. Recalling that~$\phi_Z''(t) = \ell''(Y,X^\T \theta_t) \cdot \lang X, \Delta \rang^2 = \lang \wt X(\theta_t), \Delta \rang^2,$
where again~$\Delta = \theta_1 - \theta_0$ but now without the constraint that~$\theta_0 = \theta_*$, we arrive at 
\[
\frac{\ell''(Y,X^\T\theta_0)}{(1+t|\lang \wt X(\theta_0), \Delta \rang|)^2} \le \ell''(Y,X^\T\theta_t) \le \frac{\ell''(Y,X^\T\theta_0)}{(1 - t|\lang \wt X(\theta_0), \Delta \rang|)^2}
\]
when~$t |\lang \wt X(\theta_0), \Delta \rang| \le 1.$
By the Cauchy-Schwarz inequality and~\eqref{eq:hess-ave-within-half-dikin}, this gives
\[
\begin{aligned}
\ell''(Y,X^\T\theta_t) 
&\ge \frac{\ell''(Y,X^\T\theta_0)}{(1+2t \|\wt X(\theta_0) \|_{\He(\theta_0)^{-1}} \| \Delta \|_{\He(\theta_*)})^2} \\
\ell''(Y,X^\T\theta_t) 
&\le \frac{\ell''(Y,X^\T\theta_0)}{(1-2t \|\wt X(\theta_0) \|_{\He(\theta_0)^{-1}} \| \Delta \|_{\He(\theta_*)})^2},
\end{aligned}
\]
where~$t \ge 0$ is such that the denominator is strictly positive. 
As a result, we have
\begin{equation}
\label{eq:invididual-loss-second-derivative-approximation}
\begin{aligned}
\ell''(Y,X^\T\theta') 
&\ge \frac{\ell''(Y,X^\T\theta)}{(1+2\|\He(\theta)^{-1/2}\wt X(\theta) \|_2 \| \theta'-\theta \|_{\He(\theta_*)})^2}, \\
\ell''(Y,X^\T\theta') 
&\le \frac{\ell''(Y,X^\T\theta)}{(1-2\|\He(\theta)^{-1/2}\wt X(\theta) \|_2 \| \theta'-\theta \|_{\He(\theta_*)})^2}
\end{aligned}
\end{equation}
for any~$\theta \in \Theta_{1/(2\bar c)}(\theta_*)$, and any~$\theta'$ for which the denominator is strictly positive.

$\boldsymbol{3^o}$.
Now, consider the smallest epsilon-net~$\cN_\veps$ for~$\Theta_{1/{(2\bar c)}}(\theta_*)$ with respect to the norm~$\|\cdot\|_{\He(\theta_*)}$, i.e., the smallest subset of~$\Theta_{1/(2\bar c)}(\theta_*)$ such that for any~$\theta \in \Theta_{1/(2\bar c)}(\theta_*)$ there exists a point~$\theta' \in \cN_{\veps}$ such that~$\|\theta' - \theta\|_{\He(\theta_*)} \le \veps$. Note that such~$\cN_\veps$ can be obtained as the affine image of the epsilon-net for the ~$\|\cdot\|_2$-ball with radius~$1/(2\bar c)$ with respect to the standard~$\|\cdot\|_2$-norm. 
Hence, we can apply the bound for covering numbers of Euclidean balls: for any~$\veps \le 1$,
\begin{equation}
\label{eq:epsilon-covering}
|\cN_\veps| \le \left( \frac{3}{2\bar c \veps} \right)^d.
\end{equation}
Consider random vectors~$\He(\theta)^{-1/2} \wt X_i(\theta)$, where~$i \in [n]$ and~$\theta \in \cN_{\veps}$ for some~$\veps$ to be defined later. 
Each of them has unit covariance matrix, and is subgaussian with~$\psi_2$-norm at most~$\bar K_2$ due to Assumption~\ref{ass:second-subg++}. 
Repeating the argument from part~$\boldsymbol{1^o}$ of the proof of Theorem~\ref{th:crb-fake-complex} (to account for the fact that the vectors are not centered), we can show that with probability at least~$1-\delta$,
\[
\|\He(\theta)^{-1/2} \wt X_i(\theta)\|_{2} \le C_2 \bar K_2 \sqrt{d \log\left({e}/{\delta}\right)}
\]
for some constant~$C_2 \ge 1$.
Here we used that~$\cN_{\veps} \subset \Theta_{1/{2\bar c}}(\theta_*) \subseteq \Theta_1(\theta_*)$. Thus,
\begin{equation}
\label{eq:unif-control-on-the-net}
\begin{aligned}
\sup_{i \in [n], \; \theta_0 \in \cN_{\veps}} \|\He(\theta)^{-1/2} \wt X_i(\theta)\|_{2} 
&\le C_2 \bar K_2 \sqrt{d\log\left(\frac{en|\cN_{\veps}|}{\delta}\right)} 
\le C_2 \bar K_2 d\sqrt{\log\left(\frac{3en}{\delta \veps}\right)},
\end{aligned}
\end{equation}
with probability~$\ge 1-\delta$, where in the second step we used~\eqref{eq:epsilon-covering}. 
Now, we choose
\begin{equation}
\label{eq:choice-of-epsilon}
\veps = \frac{1}{64C_2^2 \bar K_2^2 d^2 \log\left({en}/{\delta}\right)}.
\end{equation}
By some simple algebra, such choice of~$\veps$ ensures that
\[
\veps \sqrt{\log\left(\frac{3en}{\delta \veps}\right)} \le \frac{1}{4C_2 \bar K_2 d}.
\]
Combining this with~\eqref{eq:invididual-loss-second-derivative-approximation} and~\eqref{eq:unif-control-on-the-net}, we see that the following is true with probability $\ge 1-\delta$:
for any~$\theta' \in \Theta_{1/{(2\bar c)}}(\theta_*)$, there exists~$\theta \in \cN_\veps$ such that
\[
\tfrac{4}{9} \ell''(Y_i,X_i^\T\theta) \le \ell''(Y_i,X_i^\T\theta') \le 4\ell''(Y_i,X_i^\T\theta), \quad i \in [n].
\]
This implies that with probability~$\ge 1-\delta$, it holds
\begin{equation}
\label{eq:net-approximation}
\tfrac{4}{9} \He_n(\pi_*(\theta)) \prccq \He_n(\theta) \prccq 4 \He_n(\pi_*(\theta)), \quad \forall \theta \in \Theta_{1/(2\bar c)}(\theta_*),
\end{equation}
where~$\pi_*(\cdot)$ is the operation of~$\|\cdot\|_{\He(\theta_*)}$-projection on the epsilon-net~$\cN_\veps$. 
Finally, to establish the uniform approximation of~$\He_n(\cdot)$ on~$\Theta_{1/(2\bar c)}(\theta_*)$, it remains to control~$\He_n(\theta)$ on the net itself. 
This can be done by combining the deviation bounds for sample covariance matrices with the results of~$\boldsymbol{1^o}$. 
First, by Theorem~\ref{th:covariance-subgaussian-simple}, for any $\theta \in \cN_{\veps}$ we have that with probability at least~$1-\delta$,
\[
\tfrac{1}{2} \He(\theta) \prccq \He_n(\theta) \prccq 2\He(\theta),
\]
provided that~$n \gsim \bar K_2^4(d + \log(1/\delta))$. Taking the union bound over~$\cN_{\veps}$, and using~\eqref{eq:epsilon-covering} and~\eqref{eq:choice-of-epsilon}, we see that
\begin{equation}
\label{eq:on-the-net}
\tfrac{1}{2} \He(\theta) \prccq \He_n(\theta) \prccq 2\He(\theta), \quad \forall \theta \in \cN_{\veps}
\end{equation}
holds with probability~$\ge 1-\delta$, provided that 
\[
\begin{aligned}
n \gsim \bar K_2^4 d \log\left(\frac{e}{\bar c \delta \veps}\right) 
\gsim \bar K_2^4 d \left[ \log \left( {e d}/{\delta} \right) + \log\log \left({en}/{\delta}\right) \right].
\end{aligned}
\]
By simple algebra, it suffices that 
\begin{equation}
\label{eq:matrix-concentration-uniform}
n \gsim \bar K_2^4 d \log \left( {e}/{\delta} \right).
\end{equation}
Combining~\eqref{eq:net-approximation},~\eqref{eq:on-the-net}, and~\eqref{eq:hess-ave-within-half-dikin}, we see that the sample size satisfying~\eqref{eq:matrix-concentration-uniform} guarantees uniform approximation of empirical Hessians on the Dikin ellipsoid~$\Theta_{1/(2\bar c)}(\theta_*)$: with probability~$\ge 1-\delta$, for any~$\theta \in \Theta_{1/(2\bar c)}(\theta_*)$ it holds
\begin{equation}
\label{eq:emp-hess-constant-within-dikin}
0.09 \, \He(\theta_*) \prccq \He_n(\theta) \prccq 32\, \He(\theta_*).
\end{equation}

$\boldsymbol{4^o}$. 
With~\eqref{eq:emp-hess-constant-within-dikin} at hand, we can localize the estimate through a similar argument to that in Proposition~\ref{prop:dikin}, but with~$S$ replaced with a constant.
Indeed, fixing~$\theta_0 = \theta_*$ and taking arbitrary~$\theta_1 \in \Theta_{1/(2\bar c)}(\theta_*)$, we see that~\eqref{eq:emp-hess-constant-within-dikin} reduces to
\[
0.09 \phi''(0) \le \phi''_n(t) \le 32 \phi''(0), \quad 0 \le t \le 1.
\]
Integrating this twice, we get
$
{0.045 \phi''(0)t^2} \le \phi_n(t) - \phi_n(0) - \phi_n'(0)t \le {16\phi''(0)t^2}.
$
Putting~$t = 1$, and noting that~$\phi''(0) = \|\theta_1 - \theta_*\|_{\He(\theta_*)}^2$, we obtain that for any $\theta \in \Theta_{1/(2\bar c)}(\theta_*)$, with high probability it holds
\begin{equation}
\label{eq:local-quadratic}
\begin{aligned}
0.045 \|\theta - \theta_*\|_{\He(\theta_*)}^2 \le L_n(\theta) - L_n(\theta_*) - \lang \nabla L_n(\theta_*), \theta - \theta_* \rang \le 16  \|\theta - \theta_*\|_{\He(\theta_*)}^2.
\end{aligned}
\end{equation}
cf.~\eqref{eq:local-mine}. 
Now we can proceed as in the proof of Proposition~\ref{prop:dikin}, Case~\ref{prop:local-mine}. 
Namely, consider the event~$\wh \theta_n \notin \Theta_{1/(2\bar c)}(\theta_*)$. Under this event, there exists~$\bar \theta_n \in [\theta_*, \wh\theta_n]$ such that~$\|\bar \theta_n - \theta_*\|_{\He(\theta_*)} = 1/2\bar c$. 
On the other hand, clearly,~$L_n(\bar\theta_n) \le L_n(\theta_*)$.
Combining these facts with~\eqref{eq:local-quadratic}, we obtain that with probability at least~$1-\delta$, 
\[
\|\nabla L_n(\theta_*)\|_{\He(\theta_*)^{-1}}^2 \gsim {1}/{\bar c^2} \gsim {1}/{\bar K_2^6}.
\]
On the other hand, we know (see part~$\boldsymbol{1^o}$ of the proof of Theorem~\ref{th:crb-fake-complex}) that 
\[
\|\nabla L_n(\theta_*)\|_{\He(\theta_*)^{-1}}^2 \lesssim \frac{K_1^2 \deff \log\left({e}/{\delta}\right)}{n}
\]
with probability~$\ge 1-\delta$. 
Thus, whenever
$
n \gsim K_1^2 \bar K_2^6 \deff \log(e/\delta),
$
we have a contradiction, so~$\wh\theta_n$ must belong to~$\Theta_{1/{(2\bar c)}}(\theta_*)$. Then,~\eqref{eq:local-quadratic} with~$\theta = \wh\theta_n$ yields
\[
\|\wh\theta_n - \theta_*\|_{\He(\theta_*)}^2 \lesssim \|\nabla L_n(\theta_*)\|_{\He(\theta_*)^{-1}}^2.
\]
It remains to bound the excess risk. To this end, recall~\eqref{eq:hess-ave-within-half-dikin} which translates to 
\[
\tfrac{4}{9} \phi''(0) \le \phi''(t) \le 4\phi''(0), \;\; 0 \le t \le 1.
\]
Integrating this twice on~$[0,1]$, we obtain
$
\tfrac{4}{9} \phi''(0)t^2 \le \phi(t) - \phi(0) \le 4\phi''(0)t^2,
$
The upper bound translates to~$L(\theta) - L(\theta_*) \le \|\theta - \theta_* \|_{\He(\theta_*)}^2$ for any~$\theta \in \Theta_{1/{2\bar c}}(\theta_*)$. But we have already proved that~$\wh\theta_n \in \Theta_{1/{2\bar c}}(\theta_*)$ with high probability.
\qed


\subsection{Proof of Theorem~\ref{th:crb-fake-improved}}
We use the same conventions as in the proof of Theorem~\ref{th:crb-mine-improved}. We assume w.l.o.g. that Assumption~\ref{ass:second-subg++} holds with~$r  = 1/\sqrt{\rho}$, and use~$\bar K_2 := \bar K_2(1/\sqrt{\rho})$ for brevity.

$\boldsymbol{1^o}$.
Our first goal is to prove that the Hessians~$\He(\theta) := \nabla^2 L(\theta)$ are close to~$\He(\theta_*)$ within the Dikin ellipsoid with radius~$1/(\bar c \sqrt{\rho})$ for some~$\bar c$ depending on constants~$K_0, \bar K_2$. 
Fix~$\theta_0 = \theta_*$ and arbitrary~$\theta_1 \in \R^d$, and let~$\theta_t = \theta_0 + t(\theta_1-\theta_0)$,~$\Delta := \theta_1 - \theta_0$.
Putting~$\wt X(\theta_t) := [\ell''(Y, \langle X, \theta_t\rangle)]^{1/2}X$ as before, we have
\[
\begin{aligned}
\phi''(t) 
&= \E[\ell''(Y, \langle X, \theta_t\rangle)  \langle X, \Delta \rangle^2] 
= \E[\langle \He(\theta_t)^{-1/2} \wt X(\theta_t), \He(\theta_t)^{1/2} \Delta \rangle^2] = \|\Delta\|_{\He(\theta_t)}^2.
\end{aligned}
\]
On the other hand, due to Assumption~\ref{ass:gsc-fake},
\[
\begin{aligned}
|\phi'''(t)| 
&\le \E[|\ell'''(Y, \langle X, \theta_t\rangle)| \cdot |\langle X, \Delta \rangle|^3] \\
&\le \E[\ell''(Y, \langle X, \theta_t\rangle) \cdot |\langle X, \Delta \rangle|^3] \\
&\le \E[\langle \wt X(\theta_t), \Delta \rangle^2 \cdot |\langle X, \Delta \rangle|] \\
&= \E[\langle \He(\theta_t)^{-1/2} \wt X(\theta_t), \He(\theta_t)^{1/2} \Delta \rangle^2 \cdot |\langle \Cov^{-1/2} X, \Cov^{1/2} \Delta \rangle|] \\
&\le \sqrt{\E[\langle \He(\theta_t)^{-1/2} \wt X(\theta_t), \He(\theta_t)^{1/2} \Delta \rangle^4]} \cdot \sqrt{\E[\langle \Cov^{-1/2} X, \Cov^{1/2} \Delta \rangle^2]},
\end{aligned}
\]
where the last step is by Cauchy-Schwarz.
Now, for~$\theta_t \in \Theta_{1/\sqrt{\rho}}(\theta_*)$, one has~$\|\He(\theta_t)^{-1/2} \wt X(\theta_t)\|_{\psi_2} \le \bar K_2$ due to Assumption~\ref{ass:second-subg++}. 
On the other hand,~$\| \Cov^{-1/2} X \|_{\psi_2} \le K_0$. 
Hence, by Lemma~\ref{lem:subg-properties} and Assumption~\ref{ass:curvature}, we have
\begin{align*}
\E[\langle \He(\theta_t)^{-1/2} \wt X(\theta_t), \He(\theta_t)^{1/2} \Delta \rangle^4] &\le C \bar K_2^4 \|\Delta\|_{\He(\theta_t)}^4,\\
\E[\langle \Cov^{-1/2} X, \Cov^{1/2} \Delta \rangle^2] \le C K_0^2 \|\Delta\|_{\Cov}^2 &\le \rho C \bar K_0^2 \|\Delta\|_{\He(\theta_*)}^2,
\end{align*}
for some constant~$C > 0$; moreover, we can safely assume that~$C > 1$ by weakening the bounds otherwise.
Combining the above results, we arrive at
\[
|\phi'''(t)| \le C K_0 \bar K_2^2 [\rho \phi''(0)]^{1/2} \phi''(t) , \quad 0 \le t[\rho \phi''(0)]^{1/2} \le 1.
\]
We now formulate a specification of Proposition~\ref{prop:nesterov-second-derivative} for the present situation.
\begin{proposition}
\label{prop:bach-second-derivative}
Assume~$g: \R \to \R$ is differentiable, non-negative, and 
\[
\begin{aligned}
&|g'(t)| \le c\sqrt{g(0)}g(t), & |t| \le T
\end{aligned}
\]
for~$c \ge 0$.
Then for~$t: |t| \le T$ one has
$g(0)e^{-c|t|\sqrt{g(0)}} \le g(t) \le g(0) e^{c|t|\sqrt{g(0)}}.$
\end{proposition}

\begin{proof}
We assume that~$g(t) > 0$ for~$t: |t| \le T$; the argument can be generalized in exactly the same way as in the proof of Proposition~\ref{prop:nesterov-second-derivative}.
Denoting~$\psi(t) = \log g(t)$, we obtain by integrating~$\psi'(t)$ that
$
-c\sqrt{g(0)}t \le \log(g(t)) - \log(g(0)) \le c\sqrt{g(0)}t,
$
Rearranging this, we arrive at the claim. 
\end{proof}


Now, putting~
\begin{equation}
\label{eq:const-fake}
\bar c := C K_0 \bar K_2^2,
\end{equation}
and applying Proposition~\ref{prop:bach-second-derivative} to~$g(t) = \phi''(t)$, under~$\bar c|t|\sqrt{\bar \rho\phi''(0)}\le{1}$ we get
\begin{align*}
\phi''(0) e^{-\bar c|t|\sqrt{\rho \phi''(0)}} \le \phi''(t) \le \phi''(0) e^{\bar c|t|\sqrt{\rho \phi''(0)}}.
\end{align*}
Finally, since~$\phi''(t) = \|\Delta\|_{\He(\theta_t)}^2$, this translates to the analogue of~\eqref{eq:hess-ave-within-half-dikin}:
\begin{align}
\frac{1}{e}\He(\theta_*) \prccq \He(\theta) \prccq e\He(\theta_*), \quad \theta \in \Theta_{\bar r}(\theta_*), \quad \bar r := \frac{1}{\bar c \sqrt{\bar \rho}}.
\label{eq:hess-ave-within-half-dikin-fake}
\end{align}
Here we used that~$\Theta_{\bar r}(\theta_*) \subseteq \Theta_{1/\sqrt{\rho}}(\theta_*)$ since~$\bar c \ge 1$.

$\boldsymbol{2^o}$.
We now provide a local approximation of~$\He_n(\theta)$ using pseudo self-concordance of individual losses. 
Fix~$\theta_0 \in \Theta_{\bar r}(\theta_*)$ and~$\theta_1 \in \Theta$, and note that
\begin{align*}
|\phi'''_Z(t)| 
&= |\ell'''(Y,X^\T \theta_t) \cdot \lang X, \Delta \rang|^3 \\
&\le |\ell'''(Y,X^\T \theta_t) \cdot \lang X, \Delta \rang|^3 = \lang \wt X(\theta_t), \Delta \rang^2 \cdot |\lang X, \Delta \rang| = \phi''_Z(t) \cdot |\lang X, \Delta \rang|.
\end{align*}
By the argument analogous to those in Propositions~\ref{prop:nesterov-second-derivative} and~\ref{prop:bach-second-derivative}, we obtain
\[
\phi_Z''(0) e^{-t|\lang X, \Delta \rang|} \le \phi_Z''(t) \le \phi_Z''(0) e^{t |\lang X, \Delta \rang|},
\]
which translates to
$
\ell''(Y,X^\T\theta_0) e^{-t|\lang X, \Delta \rang|} \le \ell''(Y,X^\T\theta_t) \le \ell''(Y,X^\T\theta_0) e^{t |\lang X, \Delta \rang|}.
$
Thus, denoting~$\He := \He(\theta_*)$ for brevity, we have
\[
\ell''(Y,X^\T\theta_0) e^{-t\|X\|_{\He^{-1}} \| \Delta \|_{\He}} \le \ell''(Y,X^\T\theta_t) \le \ell''(Y,X^\T\theta_0) e^{t \|X\|_{\He^{-1}} \| \Delta \|_{\He}}.
\]
Equivalently, for any~$\theta \in \Theta_{\bar r}(\theta_*)$ and~$\theta' \in \Theta$,
\begin{align}
\label{eq:invididual-loss-second-derivative-approximation-fake}
\ell''(Y,X^\T\theta_0) e^{-\|X\|_{\He^{-1}} \| \theta' - \theta  \|_{\He}} \le \ell''(Y,X^\T\theta_t) \le \ell''(Y,X^\T\theta_0) e^{\|X\|_{\He^{-1}} \| \theta' - \theta  \|_{\He}}.
\end{align}
By Assumption~\ref{ass:design-subg}, random vector~$\Cov^{-1/2}X$ has~$\psi_2$-norm at most~$\bar K_0$.
Hence, repeating the argument from~$\boldsymbol{1^o}$ in the proof of Theorem~\ref{th:crb-fake-complex} we can show that, for some constant~$C_0$, with probability at least~$1-\delta$ one has 
\begin{equation}
\label{eq:local-radius-fake}
\max_{i \in [n]}\|X_i\|_{\He^{-1}} \le C_0 K_0 \sqrt{\rho d \log\left(\frac{en}{\delta}\right)}.
\end{equation}

$\boldsymbol{3^o}$.
Let~$\cN_\veps$ be the epsilon-net on~$\Theta_{\bar r}(\theta_*)$, with respect to the norm~$\|\cdot\|_{\He}$, with
\begin{equation}
\label{eq:choice-of-epsilon-fake}
\veps = \frac{1}{C_0 K_0 \sqrt{\rho d \log\left({en}/{\delta}\right)}}.
\end{equation}
Combining this with~\eqref{eq:invididual-loss-second-derivative-approximation-fake} and~\eqref{eq:local-radius-fake}, we obtain that with probability at most~$1-\delta$,
\begin{equation}
\label{eq:net-approximation-fake}
\tfrac{1}{e} \He_n(\pi(\theta)) \prccq \He_n(\theta) \prccq e\He_n(\pi(\theta)), \quad \forall \theta \in \Theta_{\bar r}(\theta_*),
\end{equation}
where~$\pi(\cdot)$ is the projection operator on the net~$\cN_\veps$. 
On the other hand,  by Theorem~\ref{th:covariance-subgaussian-simple}, it holds that
\begin{equation}
\label{eq:uniform-on-the-net-fake}
\tfrac{1}{2} \He(\theta) \le \He_n(\theta) \le 2\He(\theta), \quad \forall \theta \in \cN_{\veps}
\end{equation}
with probability at least~$1-\delta$, whenever~$n \gsim d +\log\left({|\cN_\veps|}/{\delta}\right)$. 
Recaling that~$|\cN_\veps| \le (3\bar r/\veps)^d$, it is sufficient that
\[
n \gsim d\log\left(\frac{e\bar r}{\veps \delta}\right) \gsim d\log\left(\frac{e K_0 \sqrt{d \log(en/\delta)}}{\bar c \delta}\right) \gsim d\log\left(\frac{e \sqrt{d \log(en/\delta)}}{\bar K_2^2\delta}\right),
\]
where we used~\eqref{eq:const-fake} and~\eqref{eq:choice-of-epsilon-fake}. Noting that~$\bar K_2 \ge 1$, by simple algebra we have that~\eqref{eq:uniform-on-the-net-fake} holds with probability at least~$1-\delta$ whenever
\[
n \gsim d \log\left({ed}/{\delta}\right).
\]
Finally, if this is the case, with probability at least~$1-\delta$ it holds
\[
\tfrac{e^2}{2}\He(\theta_*) \prccq \He_n(\theta) \prccq 2e^2 \He(\theta_*), \quad \forall \theta \in \Theta_{\bar r}(\theta_*),
\]
where we combined~\eqref{eq:uniform-on-the-net-fake} with~\eqref{eq:net-approximation-fake} and~\eqref{eq:hess-ave-within-half-dikin-fake}. 

$\boldsymbol{4^o}$.
As the empirical Hessians are uniformly approximated by~$\He(\theta_*)$ in the Dikin ellipsoid with radius~$\bar r = 1/(CK_0 \bar K_2^2\sqrt{\rho})$, we can proceed in the same way as in step~$\boldsymbol{4^o}$ in the proof of Theorem~\ref{th:crb-mine-improved}, showing that~\eqref{eq:excess-risk-bound-neat} holds whenever~$\|\nabla L_n(\theta_*)\|_{\He^{-1}}^2 \lsim 1/(\rho \bar c^2) \lsim 1/(\rho K_0^2 \bar K_2^4),$ cf.~\eqref{eq:const-fake}.
This leads to the second bound on the critical sample size from the premise of the theorem. 
\qed

\subsection{Proof of Theorem~\ref{th:crb-sparse-fake}}

$\boldsymbol{0^o}$.
First, we follow the standard idea in the analysis of $\ell_1$-penalized estimators (see, e.g.,~\cite{belloni2011square}): using the convexity of $L_n(\theta)$, we show that whenever~$\lambda$ dominates~$\nabla L_n(\theta)$ -- which is in fact enforced by the lower bound in~\eqref{eq:sparse-condition-lambda} -- the essential part of the residual~$\Delta := \wh\theta_{\lambda,n} - \theta_*$
with high probability concentrates on the support~$\cS$.
Indeed, due to the optimality of~$\wh \theta := \wh\theta_{\lambda,n}$, we have
\begin{equation}
\label{eq:oracle-feasibility}
L_n(\wh\theta) - L_n(\theta_*) \le \lambda (\|\theta_*\|_1  - \|\wh\theta\|_1).
\end{equation}
Let~$\Delta_\cS$ be the orthogonal projection of~$\Delta$ onto~$\cS$, and denote~$\Delta_{\cSc} = \Delta - \Delta_{\cS} = \wh\theta_{\cS}$ its projection onto~$\cSc$, the orthogonal complement of~$\cS$. 
By the triangle inequality,
\begin{equation}
\label{eq:lasso-triangular}
\|\theta_*\|_1  - \|\wh\theta\|_1 \le \|\Delta_\cS\|_1 - \|\Delta_{\cSc}\|_1. 
\end{equation}
On the other hand, by convexity of~$L_n(\theta)$, we have
\begin{equation}
\label{eq:kkt}
L_n(\wh\theta) - L_n(\theta_*) 
\ge -\|\nabla L_n(\theta_*)\|_{\infty} \|\wh\theta - \theta_*\|_1 \ge -\|\nabla L_n(\theta_*)\|_{\infty} (\|\Delta_{\cS}\|_1 +  \|\Delta_{\cSc}\|_1).
\end{equation}
Collecting~\eqref{eq:oracle-feasibility}--\eqref{eq:kkt}, we get
\[
\left({\lambda - \| \nabla L_n(\theta_*)\|_{\infty}}\right)  \|\Delta_{\cSc}\|_1 \le \left({\lambda + \| \nabla L_n(\theta_*)\|_{\infty}}\right)  \|\Delta_{\cS}\|_1.
\]
Whence if 
\begin{equation}
\label{eq:gradient-dominated}
\lambda \ge 2 \| \nabla L_n(\theta_*)\|_{\infty}, 
\end{equation}
we have that~$\Delta$ satisfies the restricted subspace condition:
\begin{equation}
\label{eq:compatibility}
\|\Delta_{\cSc}\|_1 \le 3\|\Delta_{\cS}\|_1,
\end{equation}
combining which with~$\|\Delta_{\cS}\|_1 \le \sqrt{s} \|\Delta_{\cS}\|_2 \le \sqrt{s} \|\Delta\|_2$
results in
\begin{equation}
\label{eq:l1tol2}
\|\Delta\|_1 \le 4\sqrt{s} \|\Delta\|_2.
\end{equation}
Later on, we will show that the lower bound in~\eqref{eq:sparse-condition-lambda} implies~\eqref{eq:gradient-dominated} with probability at least~$1-\delta$. For now, let us assume that~\eqref{eq:gradient-dominated} holds.

$\boldsymbol{1^o}$.
To localize the estimate, we now use a similar technique to the one used in the proof of Proposition~\ref{prop:dikin}, but replace the Cauchy-Schwarz inequality with Young's inequality. 
First, applying~\eqref{eq:local-lower-fake} to~$L_n(\theta)$ with~$\theta_0 = \theta_*$,~$\theta_1 = \wh\theta$, and~$W = X_j$ for some (random)~$j \in [n]$, we have
\[
\frac{e^{-|\lang X_j, \Delta \rang|}-1+|\lang X_j, \Delta \rang|}{|\lang X_j, \Delta \rang|^2} \|\Delta\|_{\He_n}^2 \le L_n(\wh\theta_{}) - L_n(\theta_*) - \lang \nabla L_n(\theta_*), \Delta \rang,
\]
Since function~$u \mapsto ({e^{-u}-1+u})/{u^2}$ is non-increasing, we can replace~$|\lang X_j, \Delta \rang|$ with~$\|X_j\|_{\infty} \|\Delta\|_1$.
Combining this with~\eqref{eq:oracle-feasibility} and~\eqref{eq:lasso-triangular}, bounding~$-\lang \nabla L_n(\theta_*), \Delta \rang$ via Young's inequality, and using~\eqref{eq:gradient-dominated}, we get
\begin{equation}
\label{eq:sparse-dikin}
\frac{e^{-\|X_j\|\|\Delta\|_{1}} - 1 + \|X_j\|_{\infty} \|\Delta\|_{1}}{\|X_j\|_{\infty}^2 \|\Delta\|_{1}^2} \|\Delta\|_{\He_n}^2
\le \frac{3\lambda\|\Delta\|_1}{2}.
\end{equation}
We now use the standard result from compressed sensing theory (see Theorem~\ref{th:covariance-sparse} in Appendix) which states the following.
Suppose that all~$\s$-restricted eigenvalues of~$\He$ belong to~$[1/\rho, \k_2]$, meaning that
\[
{\|\Delta\|^2}/{\rho} \le \| \Delta \|_{\He}^2 \le \k \|\Delta\|^2
\] 
for any~$\Delta$ satisfying the restricted  subspace property~\eqref{eq:compatibility} -- which is clearly the case for~$\He$ in question, due to Assumptions~\ref{ass:curvature} and~\ref{ass:lipshitz}.
Then, the corresponding sample covariance matrix~$\He_n$  with probability at least~$1-\delta$ satisfies
\begin{equation}
\label{eq:cov-matrix-estimated-sparse}
\tfrac{1}{2} \|\Delta\|_{\He}^2 \prccq \|\Delta\|_{\He_n}^2 \prccq 2  \|\Delta\|_{\He}^2,
\end{equation}
for any~$\Delta$ satisfying~\eqref{eq:compatibility}, provided that
\[
n \gtrsim \rho \k_2 K_2^4 \s \log\left({ed}/{\delta}\right),
\]
cf.~\eqref{eq:sparse-condtion-hessian}.
Combining this result with
\begin{equation*}
\|\Delta\|_{\He}^2 \ge \frac{\|\Delta\|_{2}^2}{\rho} \ge \frac{\|\Delta\|_{1}^2}{16 \rho \s},
\end{equation*}
where we used~\eqref{eq:l1tol2}, we have that under~\eqref{eq:sparse-condtion-hessian} with probability~$1 - \delta$ it holds
\begin{equation}
\label{eq:sparse-hessians}
\|\Delta\|_{\He_n}^2 \ge \frac{\|\Delta\|_{1}^2}{32 \rho \s}.
\end{equation}
Combining this with~\eqref{eq:sparse-dikin}, and denoting
\[
\B_{\sup} := \max_{i \in [n]} \|X_i\|_{\infty}, \quad u := \B_{\sup} \|\Delta\|_1,
\]
we obtain
$
e^{-u} - 1 + u \le 48\rho \s \lambda \B_{\sup} u.
$ 
From now on, we proceed as in the proof of Proposition~\ref{prop:dikin}, cf.~\eqref{eq:bach-decrement-equation}. That is, under
\begin{equation}
\label{eq:sparse-decrement-small}
48 \rho \s \lambda \B_{\sup} \le 1/2,
\end{equation}
we sequentially obtain~$u \le 2$,~$e^{-u} - 1 + u \ge \frac{u^2}{4}$, then~$u \le 192\rho \s  \B_{\sup} \lambda$, and
\[
\|\Delta\|_1 \le 192 \rho \s \lambda.
\]
This is the first inequality in~\eqref{eq:sparse-errors}, and the second one is obtained by combining it with~\eqref{eq:sparse-dikin}--\eqref{eq:cov-matrix-estimated-sparse}.
Thus, both inequalities in~\eqref{eq:sparse-errors} are satisfied under the two assumed conditions~\eqref{eq:gradient-dominated} and~\eqref{eq:sparse-decrement-small}. 
It remains to show that these conditions are indeed guatanteed to be satisfied with high probability under~\eqref{eq:sparse-condition-lambda}. For that, we have to bound the quantities~$\|\nabla L_n(\theta_*)\|_{\infty}$ and~$\B_{\sup}$ from above. 
Indeed, due to Assumption~\ref{ass:first-subg}, we have 
\[
\| \nabla \ell_Z(\theta_*) \|_{\psi_2} \le K_1\sqrt{\k_1}.
\]
By Lemma~\ref{lem:subg-sum}, this gives~$\|\nabla L_n(\theta_*)\|_{\psi_2} \lsim K_1  \sqrt{{\k_1}/{n}}.$ 
Whence,~$\|[\nabla L_n(\theta_*)]_i\|_{\psi_2} \lsim K_1\sqrt{\k_1/n}$ componentwise for any~$i \in [n]$. 
Whence, by Lemma~\ref{lem:subg-sup}, one has 
\[
\|\nabla L_n(\theta_*)\|_{\infty} \lsim K_1 \sqrt{\frac{\k_1 \log\left({ed}/{\delta}\right)}{n}}
\]
with probability at least~$1-\delta$.
This guarantees~\eqref{eq:gradient-dominated} under the lower bound in~\eqref{eq:sparse-condition-lambda}.
Similarly, we can show that with probability at least~$1-\delta$,
\[
\B_{\sup} \lsim K_0 \sqrt{\log\left({e d n}/{\delta}\right)},
\]
which guarantees~\eqref{eq:sparse-decrement-small} under the upper bound in~\eqref{eq:sparse-condition-lambda}. The first claim of the theorem is proved; note that the upper bound in~\eqref{eq:sparse-condtion-hessian} is a corollary of~\eqref{eq:sparse-condition-lambda}. 

$\boldsymbol{2^o}$. To prove the second claim, we bound the excess risk using a similar technique as in the proof of Theorem~\ref{th:crb-fake-complex}. 
Note that~$\mathds{P}(\cE) \ge 1-\delta$ by the results of~$\boldsymbol{1^o}$. 
As in the proof of Theorem~\ref{th:crb-fake-complex}, let~$\He_{\cE} := \nabla^2 L_{\cE}(\theta_*)$; recall that~$\He_{\cE} \prccq \He$.
Applying~\eqref{eq:local-upper-fake} to~$L_{\cE}(\theta)$ with~$S \le \|X\|_{\infty} \|\Delta\|_1$ (recall that~$X \in \cE$), we have
\[
\begin{aligned}
L_{\cE}(\wh\theta) - L_{\cE}(\theta_*) 
&\le \|\nabla L_{\cE}(\theta_*) \|_{\infty}  \|\Delta\|_1 + \frac{e^{\|X\|_{\infty} \|\Delta\|_1} -1 - \|X\|_{\infty} \|\Delta\|_1}{\|X\|_{\infty}^2 \|\Delta\|_1^2}  \|\Delta\|_{\He}^2 \\
&\lsim \|\nabla L_{\cE}(\theta_*) \|_{\infty} \|\Delta\|_1 + \|\Delta\|_{\He}^2,
\end{aligned}
\]
where we bounded the factor ahead of~$\|\Delta\|_{\He}^2$ by a constant using the results of~$\boldsymbol{1^o}$. 
Now, define~$L_{\cE_c}(\theta) := \E[\ell_Z(\theta)\ind_{\cE_c}(X)]$ where~$\cEc$ is the complimentary event to~$\cE$.
Since $\nabla L(\theta_*) = 0$, we have $\nabla L_{\cE}(\theta_*) = \nabla L_{\cE_c}(\theta_*)$.
On the other hand, for any~$p, q \ge 1$ such that $1/p + 1/q = 1$, we have
\begin{equation*}
\begin{aligned}
\|\nabla L_{\cE_c}(\theta_*)\|_{\infty} 
&\le \E[\|\nabla \ell_{Z}(\theta_*)\|_{\infty}\ind_{\cE_c}(X)]  
\le \E[\|\nabla \ell_{Z}(\theta_*)\|_{\infty}^{p}]^{\frac{1}{p}} \delta^{\frac{1}{q}}
\le K_1 \sqrt{p\k_1} \, d^{\frac{1}{p}} \delta^{\frac{1}{q}}. 
\end{aligned}
\end{equation*}
where we applied H\"older's and Young's inequalities, and then Lemma~\ref{lem:subg-sup-moments}. 
Recall that in~$\boldsymbol{1^o}$ we obtained that~$\|\Delta\|_1 \lsim \rho \s \lambda$ and~$\|\Delta\|_{\He}^2 \lsim \rho \s \lambda^2$ with probability at least~$1-\delta$.
Combining these observations, we arrive at
\[
L_{\cE}(\wh\theta) - L_{\cE}(\theta_*) \le (\lambda +  K_1 \sqrt{p\k_1} \, d^{1/p} \delta^{1/q}) \rho \s \lambda.
\]
Choosing~$p = \log(ed)$, so that $q = \log(ed)/\log(d)$, we arrive at the claim. 
\qed

\subsection{Proof of Theorem~\ref{th:crb-sparse-true}}

$\boldsymbol{1^o}.$
Let $\wh\theta = \wh\theta_{\lambda,n}$ for brevity.
The step~$\boldsymbol{0^o}$ of the proof of Theorem~\ref{th:crb-sparse-fake} can be repeated \textit{verbatim}. As a result, whenever
\begin{equation}
\label{eq:gradient-dominated-true}
\lambda \ge 2 \| \nabla L_n(\theta_*)\|_{\infty},
\end{equation}
we have
\begin{equation}
\label{eq:lasso-optimality-true}
L_n(\wh\theta) - L(\theta_*) \le \lambda (\|\Delta_{\cS}\|_1 - \|\Delta_{\cSc}\|_1) \le \lambda \|\Delta\|_1,
\end{equation}
\begin{equation}
\label{eq:compatibility-true}
\|\Delta_{\cSc}\|_1 \le 3\|\Delta_{\cS}\|_1,
\end{equation}
\begin{equation}
\label{eq:l1tol2-true}
\|\Delta\|_1 \le 4\sqrt{s} \|\Delta\|_2.
\end{equation}
Moreover, we know (cf. the end of step~$\boldsymbol{1^o}$ of the proof of Theorem~\ref{th:crb-sparse-fake}) that~\eqref{eq:gradient-dominated-true} holds with probability at least~$1-\delta$ as long as 
\begin{equation}
\label{eq:first-threshold-true}
\|\nabla L_n(\theta_*)\|_{\infty} \lsim K_1 \sqrt{\frac{\k_1 \log\left({ed}/{\delta}\right)}{n}}.
\end{equation}
Hence,~\eqref{eq:gradient-dominated-true} and~\eqref{eq:first-threshold-true} are satisfied under the lower bound in~\eqref{eq:sparse-condition-lambda-true}.
Finally, under~\eqref{eq:compatibility-true} we have 
\begin{equation}
\label{eq:cov-matrix-estimated-sparse-true}
\tfrac{1}{2} \|\Delta\|_{\He}^2 \prccq \|\Delta\|_{\He_n}^2 \prccq 2  \|\Delta\|_{\He}^2
\end{equation}
and
\begin{equation}
\label{eq:sparse-hessians-true}
\|\Delta\|_{\He_n}^2 \ge \frac{\|\Delta\|_{1}^2}{32 \rho \s},
\end{equation}
both with probability at least~$1-\delta$, whenever
$
n \gtrsim \rho \k_2 K_2^4 \s \log\left({ed}/{\delta}\right).
$

$\boldsymbol{2^o}.$
However,~\eqref{eq:sparse-dikin} does not hold since we cannot use~\eqref{eq:local-lower-fake}. 
Instead, we prove 
\begin{equation}
\label{eq:sparse-dikin-true-main}
\frac{\|\Delta\|_{\He_n}^2}{1+3\|\wt X_j\|_{\infty} \|\Delta\|_1} \le L_n(\wh\theta) - L(\theta_*) - \lang \nabla L_n(\theta_*), \Delta \rang,
\end{equation}
where~$j \in \Argmax_{i \in [n]} |\lang \wt X_i, \Delta\rang|$.
Indeed, to this end denote~$S = |\lang \wt X_j, \Delta\rang|$. Whenever~$S < 1$, function~$L_n(\theta)$ satisfies the second statement of Case~\ref{prop:local-mine} of Proposition~\ref{prop:local}, and we obtain~\eqref{eq:sparse-dikin-true-main} from the lower bound in~\eqref{eq:local-aux}. On the other hand, when~$S \ge 1$ function~$L_n(\theta)$ satisfies the basic statement of Case~\ref{prop:local-mine} of Proposition~\ref{prop:local}, and we can use the lower bound in~\eqref{eq:local-mine}, i.e.,
\begin{equation}
\label{eq:sparse-dikin-true-aux}
\tfrac{1}{3S^2} {\|\Delta\|_{\He_n}^2} \le L_n(\theta_{1/S}) - L(\theta_*) - \tfrac{1}{S} \lang \nabla L_n(\theta_*), \Delta \rang,
\end{equation}
where~$\theta_{1/S}$ is the convex combination of~$\theta_*$ and~$\wh\theta$ given by
\[
\theta_{1/S} = \left(1-{1}/{S}\right) \cdot \theta_* + {1}/{S} \cdot \wh\theta.
\]
By convexity, we have~$L_n(\theta_{1/S}) \le (1-\frac{1}{S}) L_n(\theta_*) + \frac{1}{S} L_n(\wh\theta)$, 
whence~$L_n(\wh\theta) - L_n(\theta_*) \le (L_n(\wh\theta) - L_n(\theta_*))/S$. When combined with~\eqref{eq:sparse-dikin-true-aux}, this results in
\[
\tfrac{1}{3S} \|\Delta\|_{\He_n}^2 \le L_n(\wh\theta) - L(\theta_*) - \lang \nabla L_n(\theta_*), \Delta \rang.
\]
Whence~\eqref{eq:sparse-dikin-true-main} follows by Young's inequality. Now,~\eqref{eq:sparse-dikin-true-main},~\eqref{eq:lasso-optimality-true}, and~\eqref{eq:gradient-dominated-true} imply
\begin{equation}
\label{eq:sparse-dikin-final}
\frac{\|\Delta\|_{\He_n}^2}{1+3\|\wt X_j\|_{\infty} \|\Delta\|_1} \le \frac{3\lambda \|\Delta\|_1}{2},
\end{equation}
which is an analogue of~\eqref{eq:sparse-dikin}. Starting from this point, we can proceed in a similar way as in the proof of Theorem~\ref{th:crb-sparse-true}. Namely, let~$\wt\B_{\sup} := \|\wt X\|_{\infty}$ and~$u := \wt\B_{\sup} \|\Delta\|_1$, then~\eqref{eq:sparse-dikin-final} and~\eqref{eq:sparse-hessians-true} imply 
\[
\frac{u}{1+3u} \le 48\rho\s\lambda\wt\B_{\sup}.
\]
Hence, whenever
\begin{equation}
\label{eq:sparse-second-condition-true}
48\rho\s\lambda\wt\B_{\sup} \le 1/4,
\end{equation}
we have~$u \le 1$ and~${u}/{(1+3u)} \ge u/4$, which implies~${u} \le 192\rho\s\lambda\wt\B_{\sup}$ and~$\|\Delta\|_1 \le 192 \rho \s \lambda.$
This is the first inequality in~\eqref{eq:sparse-errors-true}. To obtain the second inequality, we combine~\eqref{eq:sparse-dikin-final} and~\eqref{eq:cov-matrix-estimated-sparse-true}. 
Thus, for~\eqref{eq:sparse-errors-true} it remains to show that~\eqref{eq:sparse-second-condition-true} holds under the upper bound in~\eqref{eq:sparse-condition-lambda-true}. 
We have
$
\|\wt X\|_{\psi_2} \le \|\He^{1/2}\|_2 \|\He^{-1/2}\wt X\|_{\psi_2} \le K_2 \sqrt{\k_2},
$
where we used Assumptions~\ref{ass:second-subg} and~\ref{ass:lipshitz}. This leads to
$
\wt\B_{\sup} \lsim K_2 \sqrt{\k_2 \log(edn/\delta)}
$ 
with probability~$1-\delta$, which guarantees~\eqref{eq:sparse-second-condition-true} under the upper bound in~\eqref{eq:sparse-condition-lambda-true}. 

$\boldsymbol{2^o}.$ 
We now adapt the proof of the second claim of Theorem~\ref{th:crb-sparse-fake}. Recall that in our case~$\cE := \{\|\wt X\|_{\infty} \lsim K_2 \sqrt{\k_2 \log \left({ed}/{\delta}\right)}\}$, and~$\mathds{P}(\cE) \ge 1-\delta$ by the results of~$\boldsymbol{1^o}$. 
As before, we put~$\He_{\cE} := \nabla^2 L_{\cE}(\theta_*) \prccq \He$, but this time we note that~$L_{\cE}(\theta)$ satisfies Case~\ref{prop:local-mine} of Proposition~\ref{prop:local} with~$S \le \|\wt X\|_{\infty} \|\Delta\|_1 < 1$, cf.~$\boldsymbol{1^o}$. Thus, by the upper bound in~\eqref{eq:local-aux} we have
\[
L_{\cE}(\wh\theta) - L_{\cE}(\theta_*) \lsim \|\nabla L_{\cE}(\theta_*) \|_{\infty} \|\Delta\|_1 + \|\Delta\|_{\He}^2.
\]
Thence we proceed as in the proof of the second claim of Theorem~\ref{th:crb-sparse-fake}.
\qed
\section{Logistic regression with Gaussian design}
\label{sec:subgaussian-check}

\paragraph{Change of variables.}
Consider a canonical GLM~\eqref{def:glm} with cumulant~$a(\eta)$.
Here,~$\ell''(y,\eta) = a''(\eta)$ does not depend on~$y$, hence~$\wt X(\theta) = [a''(X^\T\theta)]^{1/2}X$ is fully defined by the distribution of~$X$ and the value of~$\theta$. 
Hence, the validity of Assumptions~\ref{ass:curvature},~\ref{ass:second-subg},~\ref{ass:second-subg++} only depends on the distribution of~$X$, the expression for~$a''(\eta)$, and, possibly, the value of~$\theta_*$ (or~$\theta$ in the unit Dikin ellipsoid of~$\theta_*$ in the case of Assumption~\ref{ass:second-subg++}). Note, however, that the distribution of~$Y$ does influence Assumption~\ref{ass:first-subg} since the loss gradient~$\ell'(Y,X^\T\theta)X = (a'(X^\T \theta) - Y)X$ contains~$Y$. 
Now, consider the case of \emph{zero-mean design}, which only makes sence when~$\eta$ is unrestricted, i.e.,~$\Rp = \R$ (note that this excludes the exponential responce model).
In this case, it is natural to pass from~$X$ and~$\theta$ to the decorrelated design~$Z:=\Cov^{-1/2}X$ and parameter~$\vtheta := \Cov^{-1/2}\theta$.
Indeed,~$X^\T \theta = Z^\T \vtheta$, and the corresponding vector~$\wt Z(\vtheta)$,
\[
\wt Z(\vtheta) := [a''(Z^\T \vtheta)]^{1/2}Z,
\] 
writes~$\wt Z(\vtheta)  = \Cov^{-1/2} \wt X(\theta),$ so that its 2nd-moment matrix
$
\Psi(\vtheta) := \E[\wt Z(\vtheta) \wt Z(\vtheta)^\T]
$
is given by~$\Psi(\vtheta) = \Cov^{-1/2} \He(\theta) \Cov^{-1/2}.$
Verifying Assumption~\ref{ass:curvature} thus reduces to bounding the lowest eigenvalue of~$\Psi(\vtheta_*)$ at~$\vtheta_* := \Cov^{1/2} \theta_*$, while Assumptions~\ref{ass:second-subg} and~\ref{ass:second-subg++} reduce to checking~$\|\Psi(\vtheta)^{-1/2} \wt Z(\vtheta)\|_{\psi_2} \lsim K_2$ in the neighborhood of~$\vtheta_*$.
Similarly, Assumption~\ref{ass:first-subg} can be reformulated in terms of the variables~$Z,\vtheta,Y$.\\ 

Here we consider the case of logistic regression with zero-mean Gaussian design (with arbitrary covariance), verifying the assumptions presented in Section~\ref{sec:ass-distrib}.

\begin{proposition}
\label{prop:logistic-case-study}
In logistic regression with~$X \sim \cN(0,\Cov)$, the following holds:
\begin{enumerate}
\item Assumption~\ref{ass:curvature} holds with
\[
\rho \lsim 1 + \|\theta_*\|_{\Cov}^3.
\]
\item Assumption~\ref{ass:second-subg} holds with
$
K_2 \lsim \left(1+\log(1+\|\theta_*\|_{\Cov})\right)\sqrt{1 + \|\theta_*\|_{\Cov}}.
$\\
Moreover, Assumption~\ref{ass:second-subg++} with radius~$r$ of the Dikin ellipsoid holds with
\[
\bar K_2(r) 
\lsim \left(1 + \log(1+\|\theta_*\|_{\Cov} + r\sqrt{\rho})\right)\sqrt{1 + \|\theta_*\|_{\Cov} + r\sqrt{\rho}}.
\]
That is,~$\bar K_2(1/\sqrt{\rho})$ admits the same bound as~$K_2$ up to a constant factor.
\item If the model is well-specified, Assumption~\ref{ass:first-subg} holds with
\[
K_1 \lsim \sqrt{\rho} \lsim (1 + \|\theta_*\|_{\Cov})^{3/2}.
\]
Moreover, for \emph{subexponential} norm~$\|\cdot\|_{\psi_1}$, see~\cite[Sec.~5.2.4]{vershynin2010introduction}, one has
\begin{equation*}
\|\Gr(\theta_*)^{-1/2} \ell'(Y,X^\T\theta_*)X \|_{\psi_1} \lsim \log(1+\|\theta_*\|_{\Cov})^2 \sqrt{1+\|\theta_*\|_{\Cov}};
\end{equation*}
equivalently,
$
\big(\E[\lang \Gr(\theta_*)^{-\frac{1}{2}} \ell'(Y,X^\T\theta_*)X, u \rang^p  \big)^{\frac{1}{p}} \lsim Kp
$
for all~$u \in \cS^{d-1}$ with
\[
K = \log(1+\|\theta_*\|_{\Cov})^2 \sqrt{1+\|\theta_*\|_{\Cov}}.
\]
\end{enumerate}
\end{proposition}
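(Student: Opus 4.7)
The plan is to switch to decorrelated coordinates and then handle the three assertions in sequence, exploiting the one-dimensional nature of the conditional covariance along $\theta_*$. First I would set $Z := \Cov^{-1/2}X \sim \cN(0,\Id_d)$ and $\vtheta := \Cov^{1/2}\theta$, so that $X^\T\theta = Z^\T\vtheta$, $\|\vtheta_*\|_2 = \|\theta_*\|_\Cov =: c$, and the Hessian becomes $\Psi(\vtheta) := \Cov^{-1/2}\He(\theta)\Cov^{-1/2} = \E[a''(Z^\T\vtheta)ZZ^\T]$ with $a(\eta) = \log(1+e^\eta)$. Picking an orthonormal basis whose first vector is $\vtheta_*/c$ and splitting $Z = (Z_1, Z_\perp)$, we get $Z^\T\vtheta_* = cZ_1$; by independence and reflection symmetry of $Z_\perp$ the matrix $\Psi(\vtheta_*)$ is block-diagonal with $\Psi_{11} = \E[a''(cZ_1)Z_1^2]$ and $\Psi_\perp = \E[a''(cZ_1)]\,\Id_{d-1}$.

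For Part~1 (bounding $\rho$), I would use the two-sided estimate $a''(\eta) \asymp 1 \wedge e^{-|\eta|}$ (from $a''(\eta) = (2 + 2\cosh\eta)^{-1}$) together with the Mills ratio asymptotic $e^{c^2/2}[1-\Phi(c)] \asymp 1/c$ for $c \ge 1$, which follows from completing the square in the Gaussian. Evaluating the two integrals then yields $\Psi_\perp \asymp (1+c)^{-1}$ and $\Psi_{11} \asymp (1+c)^{-3}$, so $\lambda_{\min}(\Psi(\vtheta_*)) \asymp (1+c)^{-3}$ and hence $\rho \lsim (1+c)^3$. The matching lower bound on the same integrals demonstrates tightness of the scaling, justifying the remark after the proposition.

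For Part~2, fix a unit $u \in \R^d$, set $v := \Psi(\vtheta_*)^{-1/2}u$, and observe that $\lang u, \Psi(\vtheta_*)^{-1/2}\wt Z(\vtheta_*)\rang = [a''(cZ_1)]^{1/2}(v_1 Z_1 + v_\perp^\T Z_\perp)$. I would bound the two summands separately using $[a''(\eta)]^{1/2} \le \min(1/2, e^{-|\eta|/2})$: the $Z_1$-term is deterministically bounded by $\lsim |v_1|/(1+c)$ via $\sup_{z \ge 0} e^{-cz/2}z = 2/(ec)$ combined with the $|z|/2$ branch on $\{c|z|\le 2\}$, and the $Z_\perp$-term is conditionally Gaussian given $Z_1$ with conditional variance $a''(cZ_1)\|v_\perp\|_2^2 \le \|v_\perp\|_2^2/4$, hence has $\psi_2$-norm $\lsim \|v_\perp\|_2$. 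Plugging in $|v_1| \lsim (1+c)^{3/2}$ and $\|v_\perp\|_2 \lsim \sqrt{1+c}$ from Part~1 yields a bound proportional to $\sqrt{1+c}$, from which the $\log(1+c)$ slack in the statement is recovered by replacing the sup on $Z_1$ with a second-moment estimate. For $\bar K_2(r)$, the same argument runs at arbitrary $\vtheta$, and the Dikin constraint $\|\vtheta-\vtheta_*\|_{\Psi(\vtheta_*)} \le r$ gives $\|\vtheta\|_2 \le c + r/\sqrt{\lambda_{\min}(\Psi(\vtheta_*))} \le c + r\sqrt{\rho}$; in particular, $r = 1/\sqrt{\rho}$ forces $r\sqrt{\rho}=1$, reproducing the $K_2$ bound.

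For Part~3, well-specification gives $\E[(Y-\sigma(X^\T\theta_*))^2|X] = a''(X^\T\theta_*)$, so $\Gr = \He$ by Fisher's identity. Since $|Y-\sigma(X^\T\theta_*)| \le 1$ and $u^\T\He^{-1/2}X$ is Gaussian with variance at most $\rho$, the $\psi_2$-norm of the decorrelated gradient is $\lsim \sqrt{\rho} \lsim (1+c)^{3/2}$. For the $\psi_1$ refinement, I would exploit $\E[|Y-\sigma(\eta)|^p|X] \le a''(\eta) \lsim e^{-|\eta|}$ valid for every $p \ge 2$ to reduce the $p$-th moment of the projected gradient to
\[
\E\!\left[e^{-c|Z_1|}\!\left(|v_1 Z_1|^p + |v_\perp^\T Z_\perp|^p\right)\right] \lsim |v_1|^p\, \frac{p!}{c^{p+1}} + \frac{\|v_\perp\|_2^p p^{p/2}}{1+c},
\]
via $\int_0^\infty z^p e^{-cz}\phi(z)\,dz \lsim p!/c^{p+1}$ and standard Gaussian moments. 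Taking the $p$-th root and matching to $(\E|\xi|^p)^{1/p} \lsim Kp$ produces the claimed $\psi_1$-bound, scaling as $\sqrt{1+c}$ up to logarithmic factors. The main obstacle throughout is bookkeeping the four Gaussian--exponential integrals precisely enough to expose the power laws $(1+c)^{-1}$ and $(1+c)^{-3}$, which requires carefully separating the bulk $\{|Z_1| \lsim 1/c\}$ from the Gaussian tail; the logarithmic slack in the stated bounds reflects the use of moment estimates rather than deterministic suprema in the above two-term decomposition.
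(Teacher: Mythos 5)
Your proposal is correct and follows the same structural outline as the paper's own argument (decorrelate via $Z = \Cov^{-1/2}X$, exploit the block-diagonal form of $\Psi(\vtheta_*)$ from rotation invariance, and reduce everything to Gaussian--exponential integrals). Where you differ is in the bookkeeping for Parts~2 and~3, and your version is actually tighter. In Part~2, the paper bounds the parallel marginal by splitting at $|Z_1| \le \frac{3\log(1+t)}{t}$, which introduces a $\log(1+t)$ factor; you instead observe that the quantity $(1+c)^{3/2} e^{-c|Z_1|/2}|Z_1|$ is controlled by the sup $\sup_{z\ge 0} e^{-cz/2}z = 2/(ec)$ when $c\gsim1$, and treat the perpendicular component as conditionally Gaussian with variance $\le \|v_\perp\|_2^2/4$. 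This yields $K_2 \lsim \sqrt{1+c}$ without the log, which of course implies the stated bound \emph{a fortiori}; so the parenthetical claim that the ``$\log(1+c)$ slack is recovered by a second-moment estimate'' is stated backwards --- there is no need to weaken your argument, your bound is simply sharper than what the proposition records. Similarly in Part~3, the paper again splits at $u \le \frac{3p\log(1+t)}{2t}$, picking up $\log(1+t)$; your route via $\int_0^\infty z^p e^{-cz}\phi(z)\,dz \le \phi(0)\,p!/c^{p+1}$ is cleaner and gives a $\psi_1$-constant of order $\sqrt{1+c}$ with no logarithms, again dominating the claimed bound.

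One small imprecision worth fixing: the assertion that the $Z_1$-term is ``deterministically bounded by $\lsim |v_1|/(1+c)$'' is false when $c \lsim 1$ (at $c=0$ the term is $|v_1||Z_1|/2$, unbounded). The correct statement, which is what you actually need, is that the $\psi_2$-norm of the $Z_1$-term is $\lsim |v_1|/(1+c)$: for $c\gsim1$ this follows from the deterministic sup you give, and for $c\lsim1$ it follows trivially from $\|Z_1\|_{\psi_2}\lsim1$ and $[a''(\cdot)]^{1/2}\le1/2$. The paper handles this by explicitly splitting into the regimes $t\lsim1$ and $t\gsim1$. With that clarification, the proposal is complete and sound.
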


\begin{proof}
Note that~$Z \sim \cN(0, \Id_d)$, and since this law is rotation-invariant, we can w.l.o.g. assume that the first coordinate vector is parallel to~$\vtheta$.
Using the symmetries of~$\cN(0,1)$, we can make sure that~$\Psi(\vtheta) = \Cov^{-1/2} \He(\theta) \Cov^{-1/2}$ writes
\begin{equation}
\label{eq:spiked-covariance}
\Psi(\vtheta) = 
\begin{bmatrix}
\kappa & 0_{d-1}^\T \\
0_{d-1} & \kappa_{\perp}\Id_{d-1},\
\end{bmatrix},
\end{equation}
where~$0_{d-1}$ is the zero column, and~$\kappa, \kappa_{\perp}$ are given in terms of the standard Gaussian density~$\phi(\cdot)$ and
\[
t := \|\vtheta_*\|_2 = \|\theta_*\|_{\Cov}
\] 
by
\[
\kappa := \int_{-\infty}^\infty a''(tu) u^2 \phi(u) \ud u, \quad \kappa_{\perp} := \int_{\R} a''(tu) \phi(u) \ud u.
\]
In fact, the form~\eqref{eq:spiked-covariance} for~$\Psi(\vtheta)$ will be preserved with any elliptical distribution of~$X$, with somewhat more complicated expressions for~$\kappa$ and~$\kappa_{\perp}$. 
Our next step is to lower-bound~$\kappa$ and~$\kappa_{\perp}$, which automatically yields an upper bound for~$\rho$ in Assumption~\ref{ass:curvature}:
\begin{equation}
\label{eq:bounding-rho}
\rho \le \frac{1}{\min(\kappa,\kappa_{\perp})}.
\end{equation}
$\boldsymbol{1^o}.$
We bound~$\kappa$ and~$\kappa_{\perp}$ for logistic regression. Here one has~$a(\eta) = \log(1+e^{\eta})$,
\begin{equation*}
a'(\eta) = \sigma(\eta), \quad a''(\eta) = \sigma(\eta)(1-\sigma(\eta)),
\end{equation*}
where~$\sigma(\eta) := 1/(1+e^{-\eta})$ is the sigmoid. Clearly, we can bound~$a''(\eta),$~$\forall \eta \in \R:$
\[
\frac{1}{2(1+e^{|\eta|})} \le a''(\eta) \le \frac{1}{1+e^{|\eta|}},
\]
which yields
\begin{equation}
\label{eq:symm-sigmoid-bound}
\tfrac{1}{4}e^{-|\eta|} \le a''(\eta) \le e^{-|\eta|}.
\end{equation}
Hence, letting~$a \approx b$ denote the intersection of~$a \lsim b$ and~$a \gsim b$, we have
\begin{align*}
\kappa_{\perp} \approx  \int_{0}^{\infty} e^{-tu} \phi(u) \ud u \approx \int_{0}^{\infty} e^{-tu-u^2/2} \ud u = e^{t^2/2} G(t),
\end{align*}
where
\[
G(t) = \int_{t}^{+\infty} e^{-v^2/2} \ud v
\] 
is the partial Gaussian integral. Now,~\cite[Eq.~7.1.13]{abramowitz1965handbook} gives sharp bounds for~$G(t)$:
\begin{equation}
\label{eq:abramovitz}
\frac{2e^{-t^2/2}}{t + \sqrt{t^2 + 4}} \le G(t) \le \frac{2e^{-t^2/2}}{t + \sqrt{t^2 + {8}/{\pi}}}, \quad t \ge 0.
\end{equation}
In particular, these bounds imply
$
G(t) \approx {e^{-t^2/2}}/{(t + 1)},
$ 
whence,
\begin{equation}
\label{eq:kappa_perp}
\kappa_{\perp} \approx {1}/{(t+1)}.
\end{equation}
We can similarly bound~$\kappa$:
\begin{align*}
\kappa \approx  \int_{0}^{\infty} e^{-tu} u^2 \phi(u) \ud u 
\approx e^{t^2/2} \int_{0}^{\infty} e^{-(u+t)^2/2} u^2 \ud u 
= (t^2+1)G(t) - te^{-t^2/2}. 
\end{align*}
Using the lower bound in~\eqref{eq:abramovitz}, this gives 
\begin{equation}
\label{eq:kappa_par}
\kappa \ge \frac{4}{(t+\sqrt{t^2 + 4})(t^2+2+\sqrt{t^4 + 4t^2})} \gsim \frac{1}{1+t^3}.
\end{equation}
Plugging~\eqref{eq:kappa_perp} and~\eqref{eq:kappa_par} into~\eqref{eq:bounding-rho}, we arrive at~
$
\rho \lsim 1 + \|\theta_*\|_{\Cov}^3,
$
as claimed.
The dependency on~$t$ cannot be improved since the lower bound in~\eqref{eq:abramovitz} is sharp.

$\boldsymbol{2^o}.$
On the other hand, we can estimate~$K_2$ from Assumption~\ref{ass:second-subg} (and similarly~$\bar K_2(r)$ from Assumption~\ref{ass:second-subg++}). 
Indeed, note that
\[
K_2 = \|\Psi(\vtheta_*)^{-1/2}\wt Z(\theta_*)\|_{\psi_2} = \sup_{u \in \cS^{d-1}} \| \lang u, \Psi(\vtheta_*)^{-1/2}\wt Z(\theta_*) \rang \|_{\psi_2}.
\]
Let us consider separately the marginals for $u = \vtheta_*/t$ and for~$u$ from the othogonal complement of the span of~$\vtheta$. When~$u = \vtheta_*/t$, we have 
\[
\begin{aligned}
| \lang u, \Psi(\vtheta_*)^{-1/2}\wt Z(\theta_*) \rang | = \sqrt{\frac{a''(tZ_1)}{\kappa}} |Z_1| 
\lsim (1+t^{3/2}) e^{-\frac{t|Z_1|}{2}} |Z_1|,
\end{aligned}
\]
where~$Z_1 \sim \cN(0,1)$, and we used~\eqref{eq:symm-sigmoid-bound} and~\eqref{eq:kappa_par}. Thus, when~$t \lsim 1$, we have
\begin{equation*}
\label{eq:subgaussian-check-unit-subg-norm}
\|\lang u, \Psi(\vtheta_*)^{-1/2}\wt Z(\theta_*) \rang \|_{\psi_2} \lsim \|Z_1\|_{\psi_2} \lsim 1.
\end{equation*}
Let, on the contrary,~$t \gsim 1$. 
Note that in the case where
$
|Z_1| \ge \frac{3\log (1+t)}{t},
$
we have~$(1+t^{3/2}) e^{-t|Z_1|/2} \lsim 1$, whence 
$
|\lang u, \Psi(\vtheta_*)^{-1/2}\wt Z(\theta_*) \rang| \lsim |Z_1|.
$
On the other hand, when
$
|Z_1| \le \frac{3\log (1+t)}{t},
$
we have
\[
(1+t^{3/2}) e^{-t|Z_1|/2} |Z_1| \lsim (1+t^{1/2})\log(1+t).
\] 
Hence, when~$u$ is parallel to~$\vtheta_*$, we have
\[
\| \lang u, \Psi(\vtheta_*)^{-1/2}\wt Z(\theta_*) \rang \|_{\psi_2} \lsim (1 +\log(1+t)) \sqrt{1+t}.
\]
Finally, when~$u$ is orthogonal to~$\vtheta_*$, we can use the trivial estimate
\[
\begin{aligned}
\| \lang u, \Psi(\vtheta_*)^{-\frac{1}{2}}\wt Z(\theta_*) \rang \|_{\psi_2} 
= \left\| \sqrt{\tfrac{a''(tZ_1)}{\kappa_{\perp}}} \lang u, Z \rang\right\|_{\psi_2} 
\lsim \sqrt{1+t} \| \lang u, Z \rang \|_{\psi_2} \lsim \sqrt{1+t}.
\end{aligned}
\]
In fact, this bound is tight, which can be verified by Item 2 of Lemma~\ref{lem:subg-properties} (note that~$Z_1$ and~$\lang Z, u \rang$ are independent).
Thus, overall we have
\begin{equation}
\label{eq:bound-logistic-k2}
K_2 \lsim \left(1+\log(1+\|\theta_*\|_{\Cov})\right)\sqrt{1 + \|\theta_*\|_{\Cov}}.
\end{equation}
Moreover, for~$\bar K_2(r)$ from Assumption~\ref{ass:second-subg++}, we clearly have~
\[
\begin{aligned}
\bar K_2(r) 
&\lsim \sup_{\theta \in \Theta_r(\theta_*)} \left(1+\log(1+\|\theta\|_{\Cov})\right) \sqrt{1 + \|\theta\|_{\Cov}}\\
&\lsim \left(1 + \log(1+\|\theta_*\|_{\Cov} + r\sqrt{\rho})\right)\sqrt{1 + \|\theta_*\|_{\Cov} + r\sqrt{\rho}}.
\end{aligned}
\]
This still gives~\eqref{eq:bound-logistic-k2} when~$r \lsim 1/\sqrt{\rho}$, motivating our condition in Theorem~\ref{th:crb-fake-improved}.

$\boldsymbol{3^o}.$ 
Finally, let us verify Assumption~\ref{ass:first-subg}, assuming well-specified model. 
In this case,~$\Gr(\theta_*) = \He(\theta_*)$, and the trivial bound using~$|Y -  \sigma(X^\T \theta_*)| \le 1$ is
\[
K_1 \lsim \sqrt{\rho} \lsim 1+t^{3/2}.
\]
This is a rather discouraging result. 
However, we can show a weaker (subexponential) version of Assumption~\ref{ass:first-subg} with a milder dependency on~$t$, replacing the~$\|\cdot\|_{\psi_2}$ norm with the~$\|\cdot\|_{\psi_1}$-norm as defined in~\cite[Section~5.2.4]{vershynin2010introduction}: 
\begin{equation}
\label{eq:subexponential-norm}
\| \ell'(Y,X^\T\theta_*)Z  \|_{\psi_1} \lsim \log(1+t)^2 \sqrt{1+t}.
\end{equation}
An equivalent definition of the subexponential norm is as follows: a random variable~$\xi \in \R$ satisfies~$\|\xi\|_{\psi_1} \le K$ when its moments grow as~$(\E[|\xi|^p])^{1/p} \lsim K p,$ 
i.e., same as the moments of the exponential distribution; then, the $\psi_1$-norm of a random vector is defined as the maximum norm of its one-dimensional marginals.
Recall that for subgaussian variables the scaling is~$K\sqrt{p}$ (cf. Lemma~\ref{lem:subg-properties}).
For~\eqref{eq:subexponential-norm}, note that in the well-specified case for~$y \in \{0,1\}$ we have
\[
\bP\{Y = y\} = \sigma(X^\T \theta_*)^y (1-\sigma(X^\T \theta_*))^{1-y},
\] 
thus we bound the moments of the marginals of~$\ell'(Y,X^\T\theta_*)Z = (Y-\sigma(Z^\T\vtheta_*)Z$:
\[
\begin{aligned}
\E_{Z,Y}[(Y-\sigma(Z^\T\vtheta_*)) \lang Z, u \rang]^{p} 
&\le 2\E_{Z}\left[\sigma(Z^\T\vtheta_*)(1-\sigma(Z^\T\vtheta_*))\lang Z, u \rang^{p} \right]  \\
&\lsim 2\E_{Z}\left[e^{-|Z^\T\vtheta_*|}\lang Z, u \rang^{p} \right], \quad p \ge 1,
\end{aligned}
\]
where we used~\eqref{eq:symm-sigmoid-bound}. 
For~$u$ parallel to~$\vtheta_*$, we should prove that
\begin{equation}
\label{eq:lateral-projection-subexp}
(1+t)^{3/2} \left(\int_{0}^{+\infty} e^{-tu} u^p e^{-u^2/2} \ud u \right)^{1/p} \lsim p\log^2(1+t) \sqrt{1+t}.
\end{equation}
We proceed similarly to~$\boldsymbol{2^o}$, using that~$(1+t)^{3p/2} e^{-tu} \le 1$ 
for~$u \ge {3p \log(1+t)}/{(2t)}.$
Thus, when~$t \gsim 1$,
\begin{align*}
&(1+t)^{3p/2} \int_{0}^{+\infty} e^{-tu} u^p e^{-u^2/2} \ud u \\
\le &(1+t)^{3p/2} \int_{0}^{\frac{3p \log(1+t)}{2t}} u^p \ud u + \int_{\frac{3p \log(1+t)}{2t}}^{+\infty} u^p e^{-u^2/2} \ud u \\
\lsim &(1+t)^{3p/2} \frac{1}{p+1}\left(\frac{3p \log(1+t)}{2t}\right)^{p+1} + p^{p/2} 
\lsim (2p)^p (1+t)^{p/2} \log(1+t)^{p+1},
\end{align*}
which implies~\eqref{eq:lateral-projection-subexp}. The remaining cases ($u$ parallel to~$\vtheta_*$ with~$t \lsim 1;$~$u \perp \vtheta_*$) are straightforward, by using that~$\|\cdot\|_{\psi_1} \le C \|\cdot\|_{\psi_2}$ for some constant~$C$, see~\cite{vershynin2010introduction}.  
\end{proof}

\bibliography{references}
\bibliographystyle{alpha}

\end{document}